\newtheorem*{corollary*}{Corollary}
\newtheorem{theorem}{Theorem}[section]
\newtheorem*{theorem*}{Theorem}
\newtheorem{corollary}[theorem]{Corollary}
\newtheorem{Question M}[theorem]{Question M}
\newtheorem{Conjecture S}[theorem]{Conjecture S}
\newtheorem{Conjecture K}[theorem]{Conjecture K}
\newtheorem{lemma}[theorem]{Lemma}
\newtheorem{proposition}[theorem]{Proposition}
\newtheorem{question}[theorem]{Question}
\newtheorem*{claim*}{Claim}
\theoremstyle{definition}
\newtheorem{definition}[theorem]{Definition}
\newtheorem*{theorem }{Theorem}
\newtheorem{example}[theorem]{Example}
\theoremstyle{remark}
\numberwithin{equation}{theorem}
\renewcommand*\env@matrix[1][\
arraystretch]{%
  \edef\arraystretch{#1}%
  \hskip -\arraycolsep
  \let\@ifnextchar\new@ifnextchar
  \array{*\c@MaxMatrixCols c}}
\renewcommand{\mod}{\operatorname{mod}}
\newcommand{\Ext}{\operatorname{Ext}}
\newcommand{\row}{\operatorname{row}}
\newcommand{\End}{\operatorname{End}}
\newcommand{\id}{\operatorname{id}}
\newcommand{\Hom}{\operatorname{Hom}}
\newcommand{\Tr}{\operatorname{Tr}}
\newcommand{\D}{\operatorname{D}}
\newcommand{\grade}{\operatorname{grade}}
\newcommand{\cograde}{\operatorname{cograde}}
\newcommand{\add}{\operatorname{\mathrm{add}}}
\renewcommand{\top}{\operatorname{\mathrm{top}}}
\newcommand{\rad}{\operatorname{\mathrm{rad}}}
\newcommand{\soc}{\operatorname{\mathrm{soc}}}
\newcommand{\red}{\textit{red}}
\newcommand{\wor}{\operatorname{wor}}
\renewcommand{\mod}{\operatorname{mod}}
\newcommand{\idim}{\operatorname{idim}}
\newcommand{\pdim}{\operatorname{pdim}}
\title{Auslander regular algebras and Coxeter matrices}
\date{\today}
\author[V.Kl\'asz]{Vikt\'oria Kl\'asz$^\dagger$}%
\address[V.~Kl\'asz]{Mathematical Institute of the University of Bonn, Endenicher Allee 60, 53115 Bonn, Germany}%
\email{klasz@math.uni-bonn.de}%
\thanks{$^\dagger$Supported by the Deutsche Forschungsgemeinschaft
(DFG, German Research Foundation) under Germany's Excellence Strategy - GZ 2047/1, Projekt-ID
390685813\\
$^\star$Supported by NSERC Discovery Grant RGPIN-2022-03960 and the Canada Research Chairs program, grant number CRC-2021-00120}
\author[R.~Marczinzik]{Ren\'e Marczinzik}%
\address[R.~Marczinzik]{Mathematical Institute of the University of Bonn, Endenicher Allee 60, 53115 Bonn, Germany}
\email{marczire@math.uni-bonn.de}
\author[H. Thomas]{Hugh Thomas$^\star$}
\address[H. Thomas]{LACIM, PK-4211, Universit\'e du Qu\'ebec \`a Montr\'eal
CP 8888, Succ. Centre-ville
Montr\'eal (Qu\'ebec) H3C 3P8 Canada}
\email{thomas.hugh\_r@uqam.ca}
\subjclass[2010]{Primary 16G10, 16E10, 06A11}
\keywords{partially ordered sets, distributive lattices, Auslander regular algebras, Coxeter matrix, rowmotion bijection}
\begin{document}

\begin{abstract}
We show that Iyama's grade bijection for Auslander--Gorenstein algebras coincides with the bijection introduced by Auslander and Reiten. This result uses a new characterisation of Auslander--Gorenstein algebras. Furthermore, we show that the grade bijection of an Auslander regular algebra coincides with the permutation matrix  $P$ in the Bruhat factorisation of the Coxeter matrix. This gives a new, purely linear algebraic interpretation of the grade bijection and allows us to calculate it in a much quicker way than was previously known. 
We give several applications of our main results.
First, we show that the permanent of the Coxeter matrix of an Auslander regular algebra is either 1 or $-1$.
 Second, we obtain a new combinatorial characterisation of distributive lattices among the class of finite lattices. Explicitly, a lattice is distributive if and only if its Coxeter matrix can be written as $PU$ where $P$ is a permutation matrix and $U$ is an upper triangular matrix. 
This also gives a new characterisation of the well-studied rowmotion bijection for distributive lattices.
Other applications include new homological results about modules in blocks of category $\mathcal{O}$ of semisimple  Lie algebras.

\end{abstract}
\maketitle

\section{Introduction}
Auslander--Gorenstein algebras are a non-commutative generalisation of the classical Gorenstein rings from commutative algebra.
Namely, a noetherian ring $A$ is called \emph{Auslander--Gorenstein} if there exists a finite injective coresolution 
$$0 \rightarrow A \rightarrow I^0 \rightarrow I^1 \rightarrow \cdots \rightarrow I^n \rightarrow 0$$
such that the flat dimension of $I^i$ is at most $i$ for all $i=0,1,\dots,n$.
Auslander--Gorenstein algebras of finite global dimension are called \emph{Auslander regular algebras} and they generalise the classical regular rings from commutative algebra, see \cite{B}.
Classical examples of Auslander--Gorenstein rings include Weyl algebras and enveloping algebras of finite-dimensional Lie algebras, see for example \cite{VO}. In this article, we focus on finite-dimensional algebras, where the class of Auslander--Gorenstein algebras includes for example the large class of higher Auslander algebras \cite{I2}. 
Recently it was noted that other important classes of finite-dimensional algebras are also Auslander regular, for example, incidence algebras of distributive lattices \cite{IM} and blocks of category $\mathcal{O}$ \cite{KMM}. 

Auslander--Gorenstein algebras enjoy several nice properties such as extension closedness of their syzygy categories and an explicit characterisation of minimal approximations in such categories, see for example \cite{AR}.
It was first noted by Auslander and Reiten in \cite{AR} that an Auslander--Gorenstein algebra $A$ comes with a natural bijection between indecomposable injective $A$-modules and indecomposable projective $A$-modules. Namely, this bijection sends an indecomposable injective module $I$ to the indecomposable projective module $P=\Omega^{d}(I)$, where $d$ is the projective dimension of $I$. Note that $d$ is always finite since Auslander--Gorenstein algebras are Iwanaga-Gorenstein, that is $\id A_A= \id {}_{A}A< \infty$. It is, however, a highly non-trivial result that $\Omega^{d}(I)$ is always indecomposable, and that the map is well-defined. We call this bijection the \emph{Auslander--Reiten bijection} $\psi:$ $\{ \ $indecomposable injective $A$-modules $\} \rightarrow \{$ indecomposable projective $A$-modules $\}$.
Later it was noted by Iyama in \cite{I} that every Auslander--Gorenstein algebra $A$ has a bijection on the simple $A$-modules, called the \emph{grade bijection}. This grade bijection is given by sending a simple module $S$ to the simple module $\top D \Ext_A^{g_S}(S,A)$ where $g_S:=\grade S=\inf \{i \geq 0 \mid \Ext_A^i(S,A) \neq 0 \}$ is the grade of $S$.
We denote the grade bijection by $\phi: \{$ simple $A$-modules $\} \rightarrow \{$ simple $A$-modules $\}$.

Now let $A$ be a finite-dimensional algebra with a fixed ordering $S(1),\dots,S(n)$ of the simple $A$-modules. In line with this notation, let $P(i)$ and $I(i)$ denote the indecomposable projective and indecomposable injective $A$-modules, respectively.
Then we define the \emph{Auslander--Reiten permutation} $\hat{\psi}: \{1,\dots,n\} \rightarrow \{1,\dots,n\}$ as $\hat{\psi}(i)=j$ when $\psi(I(i))=P(j)$.
We define the \emph{grade permutation} $\hat{\phi}: \{1,\dots,n\} \rightarrow \{1,\dots,n\}$ as $\hat{\phi}(i)=j$ when $\phi(S(i))=S(j)$.
Our first main theorem gives a new characterisation of Auslander--Gorenstein rings and shows that these two permutations coincide:
\begin{theorem} (Theorem \ref{AGcondition} and \ref{gradeBijCharacterisation2})
A finite-dimensional algebra is Auslander--Gorenstein if and only if $\grade S= \pdim I(S)<\infty$ for every simple module $S$ with injective envelope $I(S)$.
Furthermore, the grade permutation coincides with the Auslander--Reiten permutation. 

\end{theorem}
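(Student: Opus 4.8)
The plan is to prove the two assertions in order, since the homological characterisation of the Auslander-Gorenstein property is exactly what makes the equality of permutations tractable. Throughout one may assume $A$ is basic (the Auslander-Gorenstein condition, the grade, the projective dimensions of the indecomposable injectives, and both permutations are Morita invariant); write $S(1),\dots,S(n)$, $P(1),\dots,P(n)$, $I(1),\dots,I(n)$ for the simple, indecomposable projective and indecomposable injective left modules, and let $\nu=\D\Hom_A(-,A)$ be the Nakayama functor, which restricts to an equivalence $\proj A\to\inj A$ with $\nu P(i)=I(i)$ and whose left derived functors satisfy $\nu_k(M)\cong\D\Ext^k_A(M,A)$.

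The heart of the characterisation is a description of $\grade S(i)$ via the minimal injective coresolution $0\to A\to I^0\to I^1\to\cdots\to I^m\to 0$ of ${}_AA$. By minimality, $\soc I^k\subseteq\ker(I^k\to I^{k+1})$ for each $k$, so every homomorphism $S(i)\to I^k$---having image in $\soc I^k$---becomes zero after composing with $I^k\to I^{k+1}$; hence the complex $\Hom_A(S(i),I^\bullet)$ has all differentials zero and $\Ext^k_A(S(i),A)\cong\Hom_A(S(i),I^k)$, which is nonzero exactly when $I(i)$ is a summand of $I^k$. Therefore $\grade S(i)$ is the least $k$ with $I(i)$ a summand of $I^k$. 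The ``only if'' direction follows: when $A$ is Auslander-Gorenstein, $\pdim I^k\le k$ gives $\pdim I(i)\le\grade S(i)<\infty$, and the opposite inequality comes from running the same argument over $A^{\mathrm{op}}$ and using both the left-right symmetry of the Auslander-Gorenstein property and the identity $\pdim_A I(i)=\idim_{A^{\mathrm{op}}}\D I(i)$. For the ``if'' direction one rebuilds the Auslander-Gorenstein condition from the pointwise equalities: finiteness of all $\pdim I(i)$ forces $\pdim\,{}_A\D A<\infty$, hence $\idim A_A<\infty$, and an induction on $k$ shows that the $k$-th cosyzygy of ${}_AA$ has injective envelope assembled only from injectives $I(i)$ with $\pdim I(i)\le k$, the pointwise equality keeping the projective dimensions under control; the symmetric statement yields $\idim\,{}_AA<\infty$. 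I expect this last point---arranging that a hypothesis about \emph{left} simples also controls $\idim\,{}_AA$---to be the main obstacle in this half, presumably handled by first verifying directly that the stated condition is left-right symmetric.

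Now suppose $A$ is Auslander-Gorenstein, fix $i$, and set $d=\grade S(i)=\pdim I(i)$ (available from the first part). Unwinding the definitions, $\hat\psi(i)=j$ means $\Omega^{d}I(i)\cong P(j)$, i.e.\ $\top\,\Omega^{d}I(i)=S(j)$, while $\hat\phi(i)=j$ means $S(j)=\top\D\Ext^{d}_A(S(i),A)=\top\,\nu_d(S(i))$; so it suffices to show $\top\,\nu_d(S(i))=\top\,\Omega^{d}I(i)$. For $d=0$ this is immediate: $\pdim I(i)=0$ makes $I(i)$ an indecomposable projective $P(\hat\psi(i))$, and $\grade S(i)=0$ makes $\nu(S(i))=\D\Hom_A(S(i),A)$ nonzero; applying the right exact functor $\nu$ to $P(i)\twoheadrightarrow S(i)$ realises $\nu(S(i))$ as a nonzero quotient of $\nu P(i)=I(i)=P(\hat\psi(i))$, whence $\top\,\nu(S(i))=S(\hat\psi(i))$. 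For $d>0$, dimension shifting along the minimal projective resolution $\cdots\to Q_1\to Q_0\to S(i)\to 0$ gives $\nu_d(S(i))=\nu_1(\Omega^{d-1}S(i))=\ker\big(\nu(\Omega^{d}S(i))\to\nu(Q_{d-1})\big)$, the map being $\nu$ applied to the syzygy inclusion $\Omega^{d}S(i)\hookrightarrow Q_{d-1}$; and applying $\nu$ to the whole resolution produces a complex $\cdots\to\nu Q_1\to\nu Q_0=I(i)$ of injectives which, as $\grade S(i)=d$, is exact in homological degrees $<d$, so that $\nu_d(S(i))$ caps an injective coresolution with injective terms $\nu Q_{d-1},\dots,\nu Q_0=I(i)$ and radical connecting maps. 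Comparing this coresolution with the minimal projective resolution of $I(i)=\nu Q_0$ and invoking the structural results of Auslander-Reiten on syzygies of injectives over Auslander-Gorenstein algebras (projectivity and indecomposability of $\Omega^{d}I(i)$, and its approximation property) together with Iyama's grade theory (simplicity of $\soc\Ext^{\grade S}_A(S,A)$, equivalently of $\top\D\Ext^{\grade S}_A(S,A)$) should force the single simple top of $\nu_d(S(i))$ to coincide with $\top\,\Omega^{d}I(i)=S(\hat\psi(i))$.

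I expect the genuine difficulty to sit in two places: the left-right bookkeeping in the ``if'' half of the characterisation, and---for the permutation equality---the step from $d=0$ to general $d$, where one must track the radical connecting maps through the derived Nakayama functor tightly enough to match tops. In the Auslander regular case a cleaner route is available: then the total left-derived Nakayama functor is the Serre functor of $\Db(\mod A)$, carrying $P(i)$ to $I(i)$, and $\D\Ext^{d}_A(S(i),A)$ is precisely its top nonvanishing cohomology, in degree $-d$; the grade condition isolates that degree, both permutations record how this functor relabels the vertices, and the characterisation guarantees they read it off in the same cohomological degree.
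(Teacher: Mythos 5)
Your outline reproduces parts of the paper's strategy (the grade of $S(i)$ as the first degree in which $I(i)$ occurs in the minimal injective coresolution of $A$, the inequality $\pdim I(i)\le\grade S(i)$ from the $n$-Gorenstein condition, and the computation showing that condition (ii) forces $\pdim I^r\le r$ for all $r$), but at the places where the real work happens it either fails or is only a promise. First, the inequality $\grade S(i)\le\pdim I(i)$ does not follow from ``running the same argument over $A^{\mathrm{op}}$'': dualising the opposite-side argument yields $\idim_A P(j)\le\cograde_A S(j)$ for every $j$, i.e.\ a bound on injective dimensions of indecomposable \emph{projectives} by cogrades, and this together with $\pdim_A I(i)=\idim_{A^{\mathrm{op}}}\D I(i)$ gives no upper bound on $\grade_A S(i)$ in terms of $\pdim_A I(i)$; matching each $I(i)$ with a projective having the same homological invariants is exactly what the theorem is establishing, so it cannot be fed in here. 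The paper needs a separate nontrivial input, Lemma \ref{gradeSimpleSubmod}: for any module $M$ with a simple submodule $S$ of finite grade $g$ one has $\pdim M\ge g$, proved by constructing $N=\D\Tr\Omega^{g-1}(S)$ with $\idim N=g$ and $\Ext_A^{g}(S,N)\ne0$ and applying the long exact sequence to $0\to S\to M\to M/S\to0$; applied to $M=I(S)$ this is your missing inequality. You also use $\grade S(i)<\infty$ silently; this is the generalised Nakayama statement for Iwanaga-Gorenstein algebras and needs the argument of Lemma \ref{finitegradelemma}. In the converse direction you correctly flag the left-right passage as the obstacle, but your proposed fix (``verify directly that condition (ii) is left-right symmetric'') is essentially as hard as the theorem, and your chain ``$\pdim$ of the injectives finite, hence $\idim A_A<\infty$'' mixes the two sides; the paper closes this by citing \cite[Corollary 5.5]{AR} once $\pdim I^r\le r$ is known for all $r$.

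For the equality of the two permutations your plan stops exactly where the difficulty sits. For $d>0$ the assertion that $\nu_d(S(i))$ ``caps an injective coresolution with terms $\nu Q_{d-1},\dots,\nu Q_0$'' is incorrect: the module with that coresolution is the image of $\nu Q_d\to\nu Q_{d-1}$, whereas $\nu_d(S(i))=\D\Ext^d_A(S(i),A)$ is the homology of the complex $\nu Q_\bullet$ at position $d$; and the concluding ``should force the single simple top to coincide'' is not an argument. The paper's route is concrete: Proposition \ref{gradeBijCharacterisation} analyses $\Ext^r_A(S,A)$ as an $A^{\mathrm{op}}$-module using part (ii) of the proof of \cite[Theorem 2.10]{I} (every composition factor other than the socle $\D\phi(S)$ has grade $>r$) to show that among the indecomposable projectives whose minimal injective coresolution contains $I$ in degree $r$ there is exactly one whose top has cograde $r$, namely the projective cover $P$ of $\phi(\soc I)$; a subsequent lemma shows $P$ is the unique such projective with $\idim P=r$; and then \cite[Proposition 5.4]{AR}, giving the bijection $\Omega^{-r}$ between indecomposable projectives of injective dimension $r$ and indecomposable injectives of projective dimension $r$, yields $\Omega^{-r}(P)\cong I$ and $\Omega^{r}(I)\cong P$. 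An argument of this kind, pinning down the unique cograde-$r$ top among the relevant projectives, is what your sketch (and also the closing Serre-functor remark, which identifies the correct cohomological degree but not the labels) still has to supply.
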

In the following, we assume that a finite-dimensional algebra is a quiver algebra. We denote the primitive idempotents corresponding to the vertices of the quiver by $e_i$.
Note that this is no loss of generality when the field $K$ is algebraically closed, as in this case all finite-dimensional $K$-algebras are Morita equivalent to a quiver algebra and all our notions are invariant under Morita equivalence.
The \emph{Cartan matrix} $\omega_A=(e_{i,j})$ of a finite-dimensional algebra with $n$ simple modules is defined as the $n \times n$-matrix with entries $e_{i,j}=\dim_K e_j A e_i$. 
The \emph{Coxeter matrix} $C_A$ of a finite-dimensional algebra of finite global dimension is defined as $C_A=- \omega_A^T \omega_A^{-1}$. 
Here we use that the Cartan matrix of an algebra with finite global dimension is always invertible over the integers, see for example \cite{Eil}. The study of the Coxeter matrix is its own area in representation theory, called spectral representation theory, see for example the survey article \cite{LP}. An important result is that a path algebra $KQ$ is of finite representation-type if and only if the Coxeter matrix is of finite multiplicative order, which is related to the study of fractionally Calabi-Yau algebras, see for example \cite{CDIM}.
An important property of $C_A$ is that $C_A\cdot \underline{\dim}(P(i))=-\underline{\dim}(I(i))$ for every vertex $i$, see for example \cite[Chapter III.3]{ASS}. 
Note that the calculation of the Cartan and Coxeter matrices depends on the ordering of the simple modules of the algebra.
We call an ordering $S(1),\dots,S(n)$ of the simple $A$-modules \emph{admissible} if the condition $\grade S(i) < \grade S(j) \implies i>j$ is satisfied.
Note that an admissible ordering of the simples always exists: just order them in a non-increasing order according to their grades.

Our next main theorem gives a surprising necessary condition for an algebra to be Auslander regular and a new, purely linear algebraic interpretation of the Auslander--Reiten bijection using the Coxeter matrix. 
For this, recall that a \emph{Bruhat decomposition} of an invertible $n \times n$-matrix $M$ is a factorisation of $M$ as $M=U_1PU_2$, where $P$ is a permutation matrix and $U_1$ and $U_2$ are upper triangular matrices. A Bruhat decomposition always exists and can be obtained via the classical Gau\ss{} algorithm, see for example \cite{OOV}. Moreover, $P$ is uniquely determined by $M$. We call the permutation matrix $P$ which we get if we choose $M=C_A$ the \emph{Coxeter permutation} of the algebra $A$. Note that the Coxeter permutation of an algebra depends in general on the ordering of the simple $A$-modules. We will often identify a permutation matrix with the corresponding permutation in the following.
\begin{theorem}(Theorem \ref{mainresultcoxeter})
Let $A$ be an Auslander regular algebra with admissible ordering of the simple $A$-modules.
Then its Coxeter matrix has a Bruhat decomposition $U_1PU_2$ where $U_1$ is the identity matrix and the Coxeter permutation $P$ corresponds to the grade permutation.
\end{theorem}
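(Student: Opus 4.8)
The plan is to show that the Coxeter matrix $C_A$, after reordering rows and columns by the admissible ordering, admits a factorisation $C_A = P U_2$ with $U_2$ upper unitriangular (up to signs) and $P$ the permutation matrix of the grade permutation $\hat\phi$. Since the permutation matrix in a Bruhat decomposition is unique and the admissible ordering forces the leading factor $U_1$ to be trivial, this will give exactly the asserted statement.

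First I would recall the defining identity $C_A \cdot \underline{\dim} P(i) = -\underline{\dim} I(i)$ for every vertex $i$, which means that if $\Phi = \omega_A^{-1}$ is the matrix whose $i$-th column is $\underline{\dim} P(i)$ and $\Psi$ the matrix whose $i$-th column is $\underline{\dim} I(i)$, then $C_A \Phi = -\Psi$, i.e. $C_A = -\Psi \Phi^{-1}$. So the key is to understand the change-of-basis matrix expressing the classes $[I(i)]$ in terms of the classes $[P(j)]$ in the Grothendieck group. The bridge to homological algebra is the first main theorem of the paper: for an Auslander regular algebra, $\pdim I(i) = \grade S(i) =: g_i < \infty$, and the Auslander-Reiten bijection $\psi$, which equals the grade permutation $\hat\phi$, sends $I(i)$ to $P(\hat\phi(i)) = \Omega^{g_i}(I(i))$. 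Taking a minimal projective resolution of $I(i)$, we get in the Grothendieck group
\[
[I(i)] = \sum_{k=0}^{g_i} (-1)^k [Q_k] \quad\text{where } Q_{g_i} = P(\hat\phi(i)),
\]
so $(-1)^{g_i}[I(i)] - [P(\hat\phi(i))]$ is an alternating sum of projectives $P(j)$ appearing in resolution degrees $< g_i$. The crucial point is that any simple $S(j)$ with $\Ext^k(S(j), A) \ne 0$ for some $k < g_i$ has $\grade S(j) \le k < g_i$; combined with the admissible ordering this means $j$ occupies a position that is "earlier" than $i$ in a controlled way. I would make precise (via the standard fact relating the $k$-th term of a minimal projective resolution of $I(i)$ to $\Ext$-groups, or more directly via the filtration of syzygies) that $P(j)$ can occur in the resolution of $I(i)$ before step $g_i$ only if $\grade S(j) < g_i = \grade S(i)$, hence $j < i$ under the admissible ordering — equivalently the off-diagonal contributions sit strictly above the permuted diagonal.

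The main obstacle, and the heart of the argument, is turning this "appears only in earlier grade strata" statement into the precise triangularity claim that $C_A$ (with the admissible ordering) factors as $P U_2$ with $U_2$ upper triangular and no nontrivial $U_1$ on the left. Concretely I expect to argue: writing $C_A = (c_{ij})$, the column $C_A \underline{\dim} P(i) = -\underline{\dim} I(i)$ combined with $\psi(I(i)) = P(\hat\phi(i))$ shows, after expanding $\underline{\dim} I(i)$ in the basis of $\underline{\dim} P(j)$'s via the minimal resolution, that in the $i$-th column of $C_A P^{-1}$-type rearrangement the "dominant" entry sits in row $\hat\phi(i)$ and all other nonzero entries lie in rows $\hat\phi(j)$ with $\grade S(j) < \grade S(i)$. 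Because admissibility makes $\hat\phi$ compatible with the grade stratification in the correct direction, this is exactly the statement that $P^{-1}C_A$ (or $C_A P^{-1}$, depending on conventions) is upper triangular. I would then invoke uniqueness of the permutation matrix in a Bruhat decomposition to conclude the trivial-$U_1$ normalisation is forced, and that this unique $P$ is the grade permutation $\hat\phi = \hat\psi$. A careful bookkeeping of the two conventions — whether rows or columns get permuted, and the sign $(-1)^{g_i}$ — will be the fussy part, but it does not affect which permutation matrix appears.

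Finally I would double-check the edge cases: simples of grade $0$ (where $I(i)$ is itself projective, so $\hat\phi$ fixes $i$ and contributes a genuine diagonal entry), and the self-injective case (where all grades are $0$ and $C_A$ itself is, up to sign, a permutation matrix), to make sure the statement degenerates correctly.
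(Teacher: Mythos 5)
Your strategy---expanding each injective in the Grothendieck group via its minimal projective resolution and using $\pdim I(i)=\grade S(i)$ together with $\Omega^{g_i}(I(i))=P(\hat\phi(i))$---is the dual of the paper's argument (the paper instead expands each $P(i)$ via its minimal \emph{injective} coresolution), but as written it has a genuine gap at its central step. You claim that $P(j)$ can occur in degree $k<g_i$ of the minimal projective resolution of $I(i)$ only if $\grade S(j)<g_i$, and you justify this by noting that $\Ext^k_A(S(j),A)\neq 0$ forces $\grade S(j)\le k$. That implication is a tautology about the grade and is irrelevant here: whether $P(j)$ occurs in degree $k$ of the resolution of $I(i)$ is detected by $\Ext^k_A(I(i),S(j))\neq 0$ (Lemma \ref{extformula}(1)), not by $\Ext^k_A(S(j),A)\neq 0$. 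The claim itself is false: for the path algebra of the quiver $1\to 2$, the resolution $0\to P(2)\to P(1)\to I(1)\to 0$ has $P(1)$ in degree $0<g_1=1$ while $\grade S(1)=1$; the degree-$0$ term is the projective cover of $I(i)$, whose top is $\top I(i)$ and is not controlled by $\grade \soc I(i)$. The correct constraint, dual to what the paper proves via Proposition \ref{gradeBijCharacterisation} and Theorem \ref{gradeBijCharacterisation2}, is phrased in terms of $\idim P(j)=\cograde S(j)=\grade S(\hat\phi^{-1}(j))$: the projective $P(j)$ occurs only if $\idim P(j)<\pdim I(i)$ or $j=\hat\phi(i)$, and one also needs that $P(\hat\phi(i))$ occurs exactly once in order to guarantee the nonzero ``diagonal'' entries.

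There is also a bookkeeping error with real consequences. The matrix $N$ defined by $[I(i)]=\sum_j N_{ij}[P(j)]$ satisfies $N\,\omega_A^T=\omega_A$, so $N=-C_A^{-1}$, not $-C_A$ (relatedly, $\omega_A^{-1}$ is not the matrix of dimension vectors of the projectives; that matrix is the Cartan matrix itself, up to transpose). The matrix equal to $-C_A$ is the one recording the minimal injective coresolutions of the $P(i)$, which is exactly why the paper works on that side. Even after repairing the support condition, your $N$ comes out as (upper triangular)$\times$(permutation), and one must invert it and argue that this yields $C_A=PU_2$ with the same permutation; this can be made to work, but the triangularity is driven by the cograde/injective-dimension statement, not the grade statement, so it is not the mere sign-and-transpose bookkeeping you defer. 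Finally, uniqueness of the permutation in a Bruhat decomposition does not ``force'' $U_1=\id$; rather one must exhibit a factorisation $C_A=PU_2$ explicitly, after which uniqueness identifies the Coxeter permutation with the grade permutation.
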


As mentioned above, a famous result of Eilenberg states that the Cartan matrix of a finite-dimensional algebra with finite global dimension has determinant  either 1 or $-1$. It is a major open problem, called the Cartan determinant conjecture, whether the Cartan determinant can be $-1$ or not, see \cite{FZ} and \cite{Z}.
We recall that the \emph{permanent} of an $n\times n$-matrix $M=(m_{i,j})_{i,j}$ is defined as
$\sum\limits_{\sigma \in S_n}^{} \prod\limits_{i=1}^{n} m_{i,\sigma(i)}$. So the permanent follows the same definition as the determinant, but we omit the sign. 
It plays an important role in various areas of mathematics such as algebraic combinatorics, complexity theory and statistics. We refer for example to \cite{Mi} for a textbook introduction to permanents.
Our main result gives us the following corollary about the permanent of the Coxeter matrix of an Auslander regular algebra:
\begin{corollary} (Corollary \ref{permanentOfCoxeter})
Let $A$ be an Auslander regular algebra with Coxeter matrix $C_A$.
Then the permanent of $C_A$ is $1$ or $-1$.

\end{corollary}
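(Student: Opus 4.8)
The plan is to deduce this directly from Theorem~\ref{mainresultcoxeter}, so essentially no new homological input is needed. The first step is to record that the permanent of the Coxeter matrix does not depend on the chosen ordering of the simple modules: reordering the simples replaces $C_A$ by $QC_AQ^{T}$ for some permutation matrix $Q$, and since the permanent is invariant under permuting rows and columns, $\operatorname{perm}(QC_AQ^{T})=\operatorname{perm}(C_A)$. Hence it is enough to prove the claim for one fixed ordering, and we may take an admissible one, which exists as noted in the introduction.

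Next, with an admissible ordering fixed, Theorem~\ref{mainresultcoxeter} gives a factorisation $C_A=PU$, where $P$ is the permutation matrix of the grade permutation and $U$ is upper triangular. Two remarks about $U$ will be used. First, since $C_A$ has integer entries and $P^{-1}=P^{T}$, the matrix $U=P^{T}C_A$ again has integer entries. Second, using again that the permanent is unchanged under permuting rows, $\operatorname{perm}(C_A)=\operatorname{perm}(PU)=\operatorname{perm}(U)$; and for an upper triangular matrix the only permutation $\sigma$ with $\prod_i U_{i,\sigma(i)}\neq 0$ is the identity, so $\operatorname{perm}(U)=\prod_{i=1}^{n}U_{i,i}$, a product of integers.

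The final step is to identify this product. By Eilenberg's theorem $\det\omega_A=\pm 1$, hence $\det C_A=\det(-\omega_A^{T}\omega_A^{-1})=(-1)^{n}$. On the other hand $\det C_A=\det(P)\det(U)=\pm\prod_{i=1}^{n}U_{i,i}$, so $\prod_{i=1}^{n}U_{i,i}=\pm 1$, and therefore $\operatorname{perm}(C_A)=\prod_{i=1}^{n}U_{i,i}\in\{1,-1\}$. (Integrality of $U$ even forces each $U_{i,i}=\pm 1$, but this is not needed for the statement.)

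Since Theorem~\ref{mainresultcoxeter} does all the heavy lifting, there is no genuine obstacle in this corollary; the only points that require a little care are the invariance of the permanent under conjugation by permutation matrices, which legitimises the reduction to an admissible ordering, and the integrality of $U=P^{T}C_A$, which comes from the fact that the first triangular factor in the Bruhat decomposition is the identity.
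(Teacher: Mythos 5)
Your proof is correct and follows essentially the same route as the paper: reduce to an admissible ordering using invariance of the permanent under row/column permutations, apply Theorem~\ref{mainresultcoxeter} to write $C_A=PU$, and observe that the permanent of $C_A$ equals the product of the diagonal entries of the upper triangular factor $U$. The only (harmless) deviation is that where the paper simply quotes from Theorem~\ref{mainresultcoxeter} that the diagonal entries of $U$ are $\pm 1$, you instead deduce $\prod_i U_{i,i}=\pm 1$ from $\det C_A=(-1)^n$, which is an equally valid way to finish.
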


In fact, this corollary can be used as a quick numerical test to disprove that a given algebra is Auslander regular.

As another major application, we show a new characterisation of distributive lattices among finite lattices using the Coxeter matrix. Here, the Coxeter matrix of a finite poset $R$ is defined as the Coxeter matrix of the incidence algebra of $R$. Note that this definition depends on the ordering of the elements of $R$ but not on the field. For this characterisation, we order the vertices using a linear extension of the order on $R$. This is a more natural order for lattices than the admissible order of the corresponding incidence algebra introduced before.
\begin{theorem} (Corollary \ref{distributiveImpliesL=id} and Theorem \ref{L=idImpliesDistributive})
Let $R$ be a finite lattice with Coxeter matrix $C$, where we order the vertices according to a linear extension of the order on $R.$
Then $R$ is distributive if and only if the first non-zero entry of each row of $C$ is in a different column. In this case, $C$ admits a Bruhat decomposition  $U_1PU_2$ of $C$ in which $U_1$ is the identity matrix, and the Coxeter permutation $P$ coincides with the grade bijection.
\end{theorem}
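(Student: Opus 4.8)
The plan is to prove the two implications separately; the displayed ``in this case'' assertion falls out of the first one.

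\emph{Distributivity implies the column condition --- this is Corollary~\ref{distributiveImpliesL=id}.} Assume $R$ is distributive, so that by \cite{IM} the incidence algebra $A=KR$ is Auslander regular. Theorem~\ref{mainresultcoxeter} would apply at once were the simples ordered admissibly; the only subtlety is that here they are ordered instead by a linear extension of $\le_R$. So the key step is the lemma that \emph{a linear extension of $R$ is automatically an admissible ordering of $A$}. For this, combine the identity $\grade S(x)=\pdim I(x)$ from Theorem~\ref{AGcondition} with the fact that this invariant varies monotonically along $\le_R$ (a consequence of the standard description of minimal projective resolutions over an incidence algebra), so that any ordering refining $\le_R$ refines the order by grade --- which is exactly admissibility. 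Granting the lemma, Theorem~\ref{mainresultcoxeter} yields a Bruhat decomposition $C=I\cdot P\cdot U_2$ in which $P$ is the grade permutation $\hat\phi$. Since $U_2$ is invertible and upper triangular, the first nonzero entry of row $i$ of $C=PU_2$ lies in column $\hat\phi^{-1}(i)$; these columns are pairwise distinct as $i$ varies, which is both the stated condition and, together with $P=\hat\phi$, the ``in this case'' statement.

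\emph{The column condition implies distributivity --- this is Theorem~\ref{L=idImpliesDistributive}.} I argue by contraposition: assuming $R$ is a non-distributive finite lattice, I exhibit two rows of $C$ whose first nonzero entries share a column. The useful presentation of $C$ here is the combinatorial one: with the linear-extension order the Cartan matrix $\omega_A$ is, up to transpose, the zeta matrix $\zeta$ of $R$, so that $C=-\omega_A^T\omega_A^{-1}$ is a product of ($\pm$ a transpose of) $\zeta$ with the M\"obius matrix $\mu=\zeta^{-1}$. By Birkhoff's theorem $R$ contains a sublattice isomorphic to $M_3$ or $N_5$. I would first verify the claim directly on these two base lattices by computing their Coxeter matrices, and then, via M\"obius inversion, obtain an order-theoretic description of the column carrying the leading entry of each row of $C$ --- roughly, it is controlled by the set of $y$ with $\mu(y,x)\neq 0$ --- and check that inside the forbidden sublattice this column coincides for two distinct elements.

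The step I expect to be the main obstacle is propagating this \emph{local} failure to a genuine collision in the \emph{global} matrix $C$: the incidence algebra of a sublattice of $R$ is not a subquotient of $A$ in any way that controls Coxeter matrices, so the two bad rows found inside the $M_3$ or $N_5$ must be shown to remain bad for all of $R$. I anticipate an induction on $|R|$ --- deleting a suitably chosen meet- or join-irreducible element and relating $C_R$ to the Coxeter matrix of the smaller lattice, with the $M_3$ and $N_5$ computations as the base case. Alternatively one might recast ``$C=PU$'' directly as an identity among the M\"obius numbers of $R$ and classify its solutions, bypassing the sublattice reduction entirely. As a consistency check: in the distributive case the permutation $P$ produced here is the grade bijection, which on $R=J(P)$ is expected to be rowmotion, matching the paper's keywords.
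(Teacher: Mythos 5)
Your forward direction collapses at its key lemma: a linear extension of $R$ is \emph{not} in general an admissible ordering of $KR$, because the grade of a simple is not monotone along the lattice order. Concretely, let $R=J(P)$ where $P$ is the three-element poset with a unique minimal element $p$ below two incomparable elements $q,q'$ (so $R$ is the square with a new minimum glued underneath). With the convention that indecomposable projectives of $KR$ correspond to principal filters (for the other convention take the opposite lattice), a direct computation of $\pdim I(x)$, equivalently of $\grade S(x)$ via Theorem \ref{AGcondition}, gives grades $1,2,1,1,0$ reading up any linear extension: $\grade S(\hat 0)=1<\grade S(\langle p\rangle)=2$ while $\grade S(\hat 1)=0$. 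Hence no linear extension of this distributive lattice sorts the grades monotonically in either direction, so no linear extension is admissible and Theorem \ref{mainresultcoxeter} cannot be invoked; nor can you transfer its conclusion from an admissible order, since having $U_1=\mathrm{id}$ in a Bruhat decomposition is not preserved under simultaneous permutation of rows and columns. This is exactly why the paper does not deduce Theorem \ref{distributiveImpliesL=id} from Theorem \ref{mainresultcoxeter}: it instead computes the rows of $C$ explicitly from the Koszul-type coresolutions (Lemma \ref{comb-of-lattices} and Lemma \ref{cox}) and identifies the leftmost nonzero entry of the row indexed by $Y$ as lying in column $\row^{-1}(Y)$; the identification of the resulting permutation with the grade bijection then comes from \cite{IM} (grade bijection $=$ rowmotion), not from the admissible-order theorem.

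For the converse your text is a plan rather than a proof, and the step you yourself flag as the obstacle is precisely the one that is missing: there is no mechanism offered for turning a non-distributive $M_3$ or $N_5$ \emph{sublattice} into two rows of the full matrix $C$ whose leading entries collide, and sublattices indeed give no control over $C$. The paper's route is structurally different: Lemma \ref{reduction} shows that the property ``$U_1=\mathrm{id}$'' is inherited by \emph{upper intervals} (the rows of $C$, restricted to the columns of an upper interval $I$, reproduce the Coxeter matrix of $I$), so one may pass to a minimal non-distributive upper interval, i.e.\ a lattice all of whose proper upper intervals are distributive; these are then described by Lemma \ref{comb-of-lattices}, their Coxeter matrices computed in Lemma \ref{cox}, and Lemma \ref{rows} together with the argument of Theorem \ref{L=idImpliesDistributive} shows that $U_1=\mathrm{id}$ forces $\mathcal S$ to consist of all proper order ideals of $M$, i.e.\ forces distributivity, a contradiction. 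Note also that the relevant ``local'' objects in this scheme are minimal non-distributive upper intervals (which can be much larger than $M_3$ or $N_5$, cf.\ Example \ref{nondistlatt}), so even the base cases of your proposed induction are not the right ones. Without a replacement for the interval-reduction step, your contrapositive argument does not go through.
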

We remark that the grade bijection for incidence algebras of distributive lattices was shown to coincide with the rowmotion bijection on the vertices of the distributive lattice in \cite{IM}. The rowmotion map is a widely studied object in dynamical algebraic combinatorics, we refer for example to \cite{Str} and \cite{TW} for a survey and more information. Thus, the previous theorem gives us a new interpretation of the rowmotion map for distributive lattices via the Coxeter matrix.

\section{Preliminaries}
We assume that the reader is familiar with the basics on representation theory and homological algebra of finite-dimensional algebras and refer for example to \cite{ASS} and \cite{ARS} for an introduction. For the basics on incidence algebras in representation theory we refer for example to the textbook \cite{S} and \cite{IM}. We assume that all algebras are finite-dimensional algebras over a field $K$ and modules are finitely generated right modules unless otherwise stated. We denote the projective cover of a module $M$ by $P(M)$ and the injective envelope of $M$ by $I(M)$. We denote the natural duality of a finite-dimensional $K$-algebra $A$ by $D=\Hom_K(-,K).$

\subsection{Preliminaries on Auslander--Gorenstein algebras}
\begin{definition}
Let $A$ be a finite-dimensional algebra with minimal injective coresolution
$$0 \rightarrow A \rightarrow I^0 \rightarrow I^1 \rightarrow I^2 \rightarrow\ldots $$
$A$ is called \emph{$n$-Gorenstein} if $\pdim I^i \leq i$ for all $i=0,1,\dots,n-1$. $A$ is called \emph{Auslander--Gorenstein} if $A$ is $n$-Gorenstein for all $n$ and $\idim A< \infty$. An Auslander--Gorenstein algebra $A$ is called \emph{Auslander regular} if it additionally has finite global dimension.
\end{definition}
We remark that an Auslander--Gorenstein algebra is \emph{Iwanaga-Gorenstein}, meaning $\idim A_A = \idim {}_{A}A < \infty$, see \cite[Corollary 5.5]{AR}. The generalised Nakayama conjecture, see \cite{AR2}, states that every simple $A$-module has finite grade. This conjecture is open in general but well-known for Iwanaga-Gorenstein algebras. Since we did not find an explicit reference, we give a quick proof here:
\begin{lemma} \label{finitegradelemma}
Let $A$ be an Iwanaga-Gorenstein algebra. Then every simple $A$-module has finite grade.
\end{lemma}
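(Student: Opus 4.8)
The plan is to fix $d := \idim A_A = \idim {}_A A$, which is finite by hypothesis (the two numbers agree for Iwanaga--Gorenstein algebras), and to prove the stronger statement $\grade S \le d$ for every simple module $S$. Since $\Ext^i_A(-,A)$ vanishes in degrees $>d$, it is enough to show $\RHom_A(S,A) \neq 0$. I would work with the contravariant functor $(-)^\vee := \RHom_A(-,A)\colon \Db(\mod A) \to \Db(\mod A^{\mathrm{op}})$, which is well defined precisely because $\idim A_A < \infty$, together with the analogous $\RHom_{A^{\mathrm{op}}}(-,A)\colon \Db(\mod A^{\mathrm{op}}) \to \Db(\mod A)$ (using $\idim {}_A A < \infty$) and the natural biduality morphism $\eta_M\colon M \to \RHom_{A^{\mathrm{op}}}(\RHom_A(M,A),A)$. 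If $\eta_S$ is an isomorphism, then $\RHom_A(S,A) = 0$ would force $S \cong 0$ in $\Db(\mod A)$, contradicting that $S$ is a nonzero simple module; so the whole proof reduces to showing that $\eta$ is invertible on simple modules.

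The full subcategory of $\Db(\mod A)$ on which $\eta$ is invertible is thick (closed under shifts, cones and direct summands), because the identity functor and $\RHom_{A^{\mathrm{op}}}(\RHom_A(-,A),A)$ are both triangulated. It contains $A$, where $\eta_A$ is visibly an isomorphism, hence it contains every bounded complex of finitely generated projective modules. I claim it also contains every Gorenstein-projective module $G$: dualising a complete resolution of $G$ first by $\Hom_A(-,A)$ and then by $\Hom_{A^{\mathrm{op}}}(-,A)$ shows that $\RHom_A(G,A) = \Hom_A(G,A)$ is concentrated in degree $0$, that it is Gorenstein-projective over $A^{\mathrm{op}}$, and that $\eta_G$ is the identity map $G \to G$.

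Finally, for an arbitrary module $M$ the syzygy triangles $\Omega^j M \to P_{j-1} \to \Omega^{j-1} M \to \Omega^j M[1]$ (coming from a minimal projective resolution) together with the two-out-of-three property for morphisms of triangles show that $\eta_M$ is invertible as soon as $\eta_{\Omega^d M}$ is, since each $\eta_{P_{j-1}}$ is an isomorphism. But $\Omega^d M$ is Gorenstein-projective, because an Iwanaga--Gorenstein algebra whose (two-sided) injective dimension equals $d$ has the property that every module has Gorenstein-projective dimension at most $d$. Applying this with $M = S$ simple gives $\eta_S$ invertible and completes the argument (and in the degenerate case $d = 0$ the algebra is self-injective and $S$ embeds into its injective envelope, a direct summand of $A$, so $\grade S = 0$ directly). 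The one genuinely non-formal ingredient here, and the step needing care, is exactly this last fact --- equivalently, that the $d$-th syzygy of any module over an Iwanaga--Gorenstein algebra is totally reflexive; everything else is bookkeeping with triangles and with the standard behaviour of $\RHom_A(-,A)$ on projectives and on Gorenstein-projective modules.
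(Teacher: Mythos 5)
Your proof is correct, but it takes a genuinely different and heavier route than the paper's. The paper argues directly on $S$: either $\Ext^i_A(S,A)\neq 0$ for some $i$ (and the grade is finite by definition), or $\Ext^i_A(S,A)=0$ for all $i\ge 1$, in which case the Iwanaga--Gorenstein hypothesis makes $S$ Gorenstein projective (maximal Cohen--Macaulay) by the Ext-vanishing characterisation in Chen's notes, and a nonzero Gorenstein projective module embeds into a projective, so $\Hom_A(S,A)\neq 0$ and $\grade S=0$. You instead prove the slightly stronger (also standard) statement that $\RHom_A(M,A)\neq 0$ for every nonzero module $M$, via the derived biduality $\eta_M\colon M\to\RHom_{A^{\mathrm{op}}}(\RHom_A(M,A),A)$: invertibility of $\eta$ on projectives and on Gorenstein projectives (total reflexivity), a syzygy-triangle two-out-of-three induction, and the fact that $\Omega^d M$ is Gorenstein projective when $\idim A_A=\idim {}_AA=d$. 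All of these steps do go through, so the argument is sound; note, however, that the ingredient you single out as the non-formal one ($\Omega^d M$ totally reflexive) is itself normally deduced from exactly the criterion the paper invokes ($\Ext^{>0}_A(\Omega^d M,A)=0$ together with the Ext-vanishing characterisation of Gorenstein projectives over Iwanaga--Gorenstein algebras), so your proof consumes the same Gorenstein-homological input and then layers derived-category bookkeeping on top. What your route buys is the explicit uniform bound $\grade S\le d$ and biduality for arbitrary bounded complexes; what the paper's route buys is brevity, since it needs no derived categories at all. (One cosmetic point: $\eta_G$ for a Gorenstein projective $G$ is the canonical evaluation isomorphism $G\to\Hom_{A^{\mathrm{op}}}(\Hom_A(G,A),A)$, not literally the identity.)
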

\begin{proof}
If there is an $i$ with $\Ext_A^i(S,A) \neq 0$, then $S$ has finite grade by definition.
Now assume $S$ is a  simple $A$-module with $\Ext_A^i(S,A)=0$ for all $i \geq 1$. Then $S$ is a maximal Cohen-Macaulay $A$-module (also called Gorenstein projective $A$-module) by \cite[Theorem 2.3.3]{C}. Here we used the assumption that $A$ is Iwanaga-Gorenstein.
Since every maximal Cohen-Macaulay module satisfies $\Hom_A(S,A) \neq 0$ by \cite[Lemma 2.1.4]{C}, the statement of the lemma follows.
\end{proof}

The next definition is due to Iyama.
\begin{definition}
Let $A$ be a finite-dimensional algebra and let 
$$0 \rightarrow A \rightarrow I^0 \rightarrow I^1 \rightarrow I^2 \rightarrow \cdots $$
be a minimal injective coresolution of $A$.
Then $l \geq 0$ is a \emph{dominant number} of $A$ if $\pdim I^i < \pdim I^l$ holds for any $i$ with $0 \leq i < l$.

\end{definition}

The next theorem was shown by Iyama in \cite[Theorem 1.1]{I}:

\begin{theorem} \label{Iyama theorem}
Let $A$ be an $n$-Gorenstein algebra. Then any dominant number $l$ of $A$ with $l<n$ satisfies $\pdim I^l=l$. Moreover, the set of dominant numbers of $A$ smaller than $n$ coincides with that of $A^{op}$ and with the set of grades of $A$-modules which are smaller than $n$.
\end{theorem}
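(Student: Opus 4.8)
The plan is to rephrase grade in terms of the minimal injective coresolution of $A$, use this to reduce the equality $\pdim I^l=l$ to a single lemma about indecomposable injective modules, prove that lemma with the Auslander form of the $n$-Gorenstein condition together with biduality, and finally derive the symmetry and the grade description. The starting observation is that, since the coresolution $0\to A\to I^0\to I^1\to\cdots$ is minimal, any nonzero homomorphism from a simple module $S$ to $I^j$ has image in $\soc I^j=\soc\ker(I^j\to I^{j+1})$, so the complex $\Hom_A(S,I^\bullet)$ has zero differential. Hence $\Ext^j_A(S(i),A)=\Hom_A(S(i),I^j)$, which is nonzero exactly when $I(i)$ is a direct summand of $I^j$; thus $\grade S(i)=\min\{\,j : I(i)\mid I^j\,\}$.

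For the first assertion I would argue with the running maximum $r_l:=\max\{\pdim I^0,\dots,\pdim I^l\}$. By definition $l$ is a dominant number precisely when $r_{l-1}<\pdim I^l$, i.e.\ when this running maximum strictly increases at $l$, so $\pdim I^l=r_l$ for dominant $l$. Since $l<n$ the inequality $\pdim I^l\le l$ is automatic, and for the reverse one picks an indecomposable injective summand $I(i)$ of $I^l$ with $\pdim I(i)=\pdim I^l$; by dominance $\pdim I(i)>\pdim I^j$ for every $j<l$, so $I(i)$ occurs in no earlier term and $\grade S(i)=l$. The statement therefore reduces to the following claim, of which only the inequality $\ge$ is not automatic: \emph{if $A$ is $n$-Gorenstein and $\grade S=g<n$ for a simple $S$, then $\pdim I(S)=g$}. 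Granting this, $l\le\pdim I(i)=\pdim I^l\le l$ gives $\pdim I^l=l$; conversely $\pdim I^l=l$ with $l<n$ makes $l$ dominant because $\pdim I^j\le j<l$ for $j<l$. So for indices $<n$ one obtains the clean equivalence ``$l$ dominant $\iff\pdim I^l=l$''.

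To prove the claim I would translate via the duality $D$: writing $S=S(k)$ and $P'(k)=Ae_k$ one has $I(S)=DP'(k)$ and $\pdim_A I(S)=\idim_{A^{op}}P'(k)$, so it is enough to find a simple left module $T$ with $\Ext^g_{A^{op}}(T,P'(k))\ne0$, i.e.\ with $S(k)$ a composition factor of the right module $\Ext^g_{A^{op}}(T,A)$. Here I would use the Auslander reformulation of the $n$-Gorenstein property: for $1\le j\le n$ and all finitely generated $M$, every submodule of $\Ext^j_A(M,A)$ has grade $\ge j$ over $A^{op}$ (and symmetrically). Applying it to $M=S(k)$, $j=g$ shows $E:=\Ext^g_A(S(k),A)$ is a nonzero left module of grade $\ge g$; feeding $E$ into the biduality spectral sequence $E_2^{p,q}=\Ext^p_{A^{op}}(\Ext^{-q}_A(S(k),A),A)\Rightarrow S(k)$ (in total degree $0$), whose $E_2$-page sits in $\{p+q\ge 0,\ q\le -g\}$ by the grade of $S(k)$ and the Auslander condition, one wants to conclude that the biduality map $\mu\colon S(k)\to\Ext^g_{A^{op}}(E,A)=E_2^{g,-g}$ is nonzero, hence injective since $S(k)$ is simple. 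Decomposing $\Ext^g_{A^{op}}(E,A)=\bigoplus_m\Ext^g_{A^{op}}(E,Ae_m)$ and reading off the idempotent action, the embedded copy of $S(k)$ forces $\Ext^g_{A^{op}}(E,Ae_k)\ne0$, hence $\idim_{A^{op}}Ae_k\ge g$ and $\pdim_A I(S)\ge g$. \emph{This biduality step is where I expect the real difficulty to lie}: making the grade/biduality formalism valid up to homological degree $n$ over an $n$-Gorenstein algebra (which need not have finite global dimension) means controlling the differentials out of $E_2^{g,-g}$ and the kernel of $\mu$, and this is exactly where the full $n$-Gorenstein hypothesis, for all $j\le n$ rather than just $j=g$, gets consumed.

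For the ``moreover'' part I would use that $A^{op}$ is again $n$-Gorenstein and that the Auslander ``$\grade\Ext^j\ge j$'' condition holds simultaneously for $A$ and $A^{op}$ — both classical facts. Combined with the equivalence ``$l$ dominant $\iff\pdim I^l=l$'' and its left-hand analogue, the coincidence of dominant numbers $<n$ for $A$ and $A^{op}$ reduces to matching, for each $l<n$, the existence of a simple right $A$-module of grade $l$ with that of a simple left $A$-module of grade $l$, which the biduality of the previous paragraph supplies ($E=\Ext^g_A(S(k),A)$ has grade exactly $g$ and in turn reproduces $S(k)$, giving a symmetric pairing of simples by grade). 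Finally, to see that the set of dominant numbers $<n$ equals the set of grades $<n$ of all finitely generated modules, it remains to check that every grade value $<n$ occurring for some module already occurs for a simple module: from $\Ext^g_A(M,A)\ne0$ one chases a composition series of $M$, using the Auslander vanishing of $\Ext^j_A(-,A)$ in the appropriate range, to extract a composition factor of grade exactly $g$. This last step, like the running-maximum bookkeeping, is routine; the genuine work is concentrated in the biduality lemma of the third paragraph.
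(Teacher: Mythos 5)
First, a point of comparison: the paper itself gives no proof of this statement; it is quoted verbatim from Iyama \cite[Theorem 1.1]{I}, whose argument runs through the symmetry/duality theorems for $n$-Gorenstein rings (the results of the type of \cite[Theorem 2.10]{I}) --- exactly the machinery your sketch defers. Your reductions are correct: minimality gives $\grade S=\min\{j: I(S)\mid I^j\}$, and for $l<n$ the equivalence ``$l$ is dominant $\iff \pdim I^l=l$'' follows once one knows that $\grade S=g<n$ forces $\pdim I(S)\ge g$. But that inequality, the heart of your argument, is not actually proved: you reduce it to the nonvanishing of the biduality map $S\to \Ext^g_{A^{op}}(\Ext^g_A(S,A),A)$ and then concede that controlling the kernel of this map and the spectral sequence differentials over an algebra that is merely $n$-Gorenstein is ``where the real difficulty lies''. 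That concession marks a genuine gap, not a routine verification; it is essentially the content of Iyama's symmetry results. Ironically, the inequality you need is elementary and requires no Gorenstein hypothesis at all: it is Lemma \ref{gradeSimpleSubmod} of this paper applied to $M=I(S)$ (legitimate since $g<n<\infty$), proved via $N=\D\Tr\Omega^{g-1}(S)$, which satisfies $\idim N=g$ and $\Ext^g_A(S,N)\neq 0$ following \cite{R}; the long exact sequence for $0\to S\to I(S)\to I(S)/S\to 0$ then gives $\pdim I(S)\ge g$. Substituting this for your biduality step would complete the first assertion of the theorem.

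The ``moreover'' part remains unproved even after that repair. To show that every grade value $g<n$ of an arbitrary module is a dominant number you need a simple module of grade exactly $g$ (so that $\pdim I^g\ge g$); your composition-series chase only produces, by d\'evissage, a composition factor of grade $\le g$, and a nonzero map $M\to I^g$ likewise only yields a simple of grade $\le g$, so the claim that this step is ``routine'' is unfounded. Similarly, the coincidence of the dominant numbers of $A$ and $A^{op}$ below $n$ is made to rest on the assertion that $E=\Ext^g_A(S,A)$ has grade exactly $g$ over $A^{op}$ and ``reproduces $S$'', i.e.\ on the same unproven biduality pairing; even granting $\grade_{A^{op}}E\ge g$ from the Auslander condition (itself a nontrivial input you invoke as classical), you have not produced a simple left module of grade exactly $g$ --- the top of $E$ need not have grade $g$ without Iyama's theorem that all composition factors of $\Ext^g_A(S,A)$ except a distinguished one have strictly larger grade. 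These two points are precisely where the depth of the statement sits, so as written your proposal establishes the easy bookkeeping around the definitions but not the theorem itself.
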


\begin{definition}
An algebra $A$ is called \emph{diagonal Auslander--Gorenstein} (\emph{diagonal Auslander regular}) if it is Auslander--Gorenstein (Auslander regular) and $\pdim X=l$ for every direct summand $X$ of $I^l$.
$A$ simple module is called \emph{perfect} if $\grade S=\pdim S$.

\end{definition}
\subsection{The Bruhat decomposition of an invertible matrix}
We recall the following basic fact from linear algebra, whose proof relies mainly on Gaussian elimination. 
\begin{theorem}
Let $M$ be an invertible real $n \times n$ matrix. Then there is a factorisation $M=U_1PU_2$, where $P$ is a permutation matrix and $U_1$ and $U_2$ are upper triangular matrices. The permutation matrix $P$ is uniquely determined by $M$.
\end{theorem}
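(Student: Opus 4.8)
The plan is to establish existence by column-by-column Gaussian elimination and uniqueness by a short counting argument on permutations.

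For existence it is enough to find invertible upper triangular matrices $V_1,V_2$ with $V_1MV_2$ a permutation matrix: one then sets $U_1=V_1^{-1}$ and $U_2=V_2^{-1}$, since invertible upper triangular matrices are closed under products and inverses. The relevant elementary moves are: left multiplication by $I+cE_{qp}$ (with $E_{qp}$ the matrix unit), which adds $c$ times row $p$ to row $q$ and is upper triangular exactly when $q<p$; right multiplication by $I+cE_{pq}$, which adds $c$ times column $p$ to column $q$ and is upper triangular exactly when $p<q$; and multiplication by an invertible diagonal matrix, which rescales a row or column. I process the columns of $M$ from left to right, keeping the invariant that once columns $1,\dots,k-1$ have been treated the first $k-1$ columns of the current matrix are distinct standard basis vectors $e_{i_1},\dots,e_{i_{k-1}}$ and its rows $i_1,\dots,i_{k-1}$ equal $e_1^{T},\dots,e_{k-1}^{T}$. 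All the moves above are invertible, so the matrix stays invertible; hence column $k$ cannot be supported on $\{i_1,\dots,i_{k-1}\}$ alone (otherwise columns $1,\dots,k$ would be linearly dependent), and we may let $i_k$ be the largest index of a row outside $\{i_1,\dots,i_{k-1}\}$ in which column $k$ is nonzero. The rows $i_1,\dots,i_{k-1}$ already vanish in column $k$, so all of its nonzero entries lie in rows of index at most $i_k$; clear those above $i_k$ by adding multiples of row $i_k$ (an upper triangular move that does not touch columns $1,\dots,k-1$, where row $i_k$ vanishes), then clear the entries of row $i_k$ in columns of index $>k$ by adding multiples of column $k$ (again upper triangular; it affects no other row, since column $k$ is then supported only on row $i_k$, and leaves columns $1,\dots,k-1$ alone). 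The invariant now holds with $k$ in place of $k-1$, and after $n$ steps a final rescaling of rows turns the resulting monomial matrix (one nonzero entry per row and per column) into a permutation matrix.

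For uniqueness, suppose $U_1P_wU_2=U_1'P_{w'}U_2'$, where $P_w,P_{w'}$ are the permutation matrices of $w,w'\in S_n$ normalised by $P_\sigma e_j=e_{\sigma(j)}$ (so $P_\sigma P_\tau=P_{\sigma\tau}$), and the four triangular factors are upper triangular and necessarily invertible (as $M$ and the $P$'s are). Set $A=(U_1')^{-1}U_1$ and $B=U_2(U_2')^{-1}$, again invertible upper triangular; the hypothesis rearranges to $AP_wB=P_{w'}$, i.e.\ $A=P_{w'}B^{-1}P_{w^{-1}}$, and an entrywise computation gives $A_{ij}=(B^{-1})_{(w')^{-1}(i),\,w^{-1}(j)}$. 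Taking $i=j$: since $A$ is invertible upper triangular, each $(B^{-1})_{(w')^{-1}(i),\,w^{-1}(i)}$ is nonzero, so, $B^{-1}$ being upper triangular, $(w')^{-1}(i)\le w^{-1}(i)$ for every $i$. As $\sum_i (w')^{-1}(i)=\sum_i w^{-1}(i)=\binom{n+1}{2}$, all these inequalities must be equalities, hence $(w')^{-1}=w^{-1}$, so $w=w'$ and $P_w=P_{w'}$.

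The only part calling for genuine care is the bookkeeping in the existence step: checking at each stage that the moves used are upper triangular and do not disturb the rows and columns fixed in earlier stages, together with the observation that invertibility persists so that the pivot $i_k$ always exists. Everything else is routine matrix algebra.
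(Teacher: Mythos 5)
Your proof is correct. The paper does not actually prove this statement: it records it as a known linear-algebra fact, remarks that the proof ``relies mainly on Gaussian elimination'', and refers to \cite{OOV} for details. Your existence argument is precisely that Gaussian-elimination proof written out in full --- reducing $M$ to a monomial matrix by upper triangular row and column operations and then inverting the elementary factors --- and your uniqueness argument (rearranging to $AP_wB=P_{w'}$ with $A,B$ invertible upper triangular, reading off $(w')^{-1}(i)\le w^{-1}(i)$ for all $i$ from the nonvanishing diagonal of $A$ and the upper triangularity of $B^{-1}$, then forcing equality by summing) is a clean, self-contained replacement for the citation. One cosmetic wrinkle: as stated, your invariant requires the treated columns to be exactly standard basis vectors, which forces a rescaling of the pivot within each step (a diagonal, hence upper triangular, move you have already allowed); if instead you defer all rescaling to the very end, as your last sentence suggests, the invariant should be weakened to ``nonzero multiples of distinct standard basis vectors''. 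Either fix is immediate and does not affect the argument.
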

This decomposition is called the \emph{Bruhat decomposition} of $M$.
We remark that while the permutation matrix $P$ is uniquely determined by $M$, this is not true in general for $U_1$ and $U_2$. But by assuming some additional conditions on $U_1$ and $U_2$ one can obtain a unique form of the Bruhat decomposition, we refer to \cite{OOV} for details. For the relevance of the Bruhat decomposition and generalisations, we refer for example to \cite[Chapter 27]{Bu}.
\subsection{A survey of bijections}
Let $A$ be a finite-dimensional algebra with a fixed ordering $S(1),\dots,S(n)$ of the simple $A$-modules. In this subsection, we survey some bijections that exist for some special classes of finite-dimensional algebras and related combinatorial objects, such as distributive lattices and indecomposable summands of cluster-tilting modules. Only the first bijection, namely, the Coxeter permutation, is new.

\subsubsection{The Coxeter permutation}
Let $A$ be an algebra of finite global dimension with simple $A$-modules $S(1), \ldots, S(n)$ and Coxeter matrix $C$.
Let $C=U_1PU_2$ be a Bruhat decomposition of $C$. Then we call the permutation $\pi:\{1,\dots,n\} \rightarrow \{1,\dots,n\}$ corresponding to $P$ the \emph{Coxeter permutation} of $A$.
We note that the Coxeter permutation depends on the ordering of the simple modules of the algebra $A$. 
When $P$ is a finite poset with a fixed linear extension, then the Coxeter permutation of the incidence algebra of $KP$ is called the \emph{Coxeter permutation} of $P$. This definition depends in general on the linear extension, but we will see that it is independent of the choice of the linear extension when $P$ is a distributive lattice.
Here we obtain a permutation $\pi$ on $\{1,\dots,n\}$ from an $n \times n$-permutation matrix by sending $i$ to the index of the unique row which has a non-zero entry in the $i$-th column, see Example \ref{examplerowmotioncoxeter}. In \cite{DJMSSTW} the Coxeter permutation is studied under the name echelonmotion in a more combinatorial context for incidence algebras of posets. In the forthcoming article \cite{KKM} it will be shown that the Coxeter permutation coincides with Ringel's homological bijection \cite{R} for linear Nakayama algebras with the canonical ordering of the simple modules.

\subsubsection{The grade bijection of Iyama}
\label{gradeBijection} If $A$ is an Auslander--Gorenstein algebra, the grade bijection is defined as $$\phi: \{\text{ simple A-modules }\}/_{\cong} \rightarrow \{ \text{ simple A-modules } \}/_{\cong}$$  $$S\mapsto \top(D\Ext_A^{g_S}(S,A)),$$ where $g_S:=\inf \{i \geq 0 \mid \Ext_A^i(S,A) \neq 0 \}$ is the grade of $S$. The cograde of the simple module $S$ is defined dually, i.e. as $\inf \{i \geq 0 \mid \Ext_A^i(D(A),S) \neq 0 \}.$ An important property of $\phi$ is that cograde of $\phi(S)$ equals the grade of $S$. See  \cite[Theorem 2.10]{I}.

We define the \emph{grade permutation} $\hat{\phi}: \{1,\dots,n\} \rightarrow \{1,\dots,n\}$ as $\hat{\phi}(i)=j$ when $\phi(S(i))=S(j)$.

\subsubsection{The Auslander--Reiten bijection for Auslander--Gorenstein algebras}
\label{ARbijection}
Let $A$ be an Auslander--Gorenstein algebra. Then there is a bijection 
$$\psi: \ \{ \ \text{indecomposable injective A-modules }\}/_{\cong} \rightarrow \{\text{ indecomposable projective A-modules }\}/_{\cong}$$ $$I\mapsto P=\Omega^{d}(I),$$ where $d$ is the projective dimension of $I$. Moreover, $P$ has then injective dimension equal to $d$. 

The inverse of this bijection $\psi^{-1}$ maps an indecomposable projective $A$-module $P$ to $I=\Omega^{-d}(P)$ where $d=\idim(P).$ See \cite[Proposition 5.4]{AR}. 

We define the \emph{Auslander--Reiten permutation} $\hat{\psi}: \{1,\dots,n\} \rightarrow \{1,\dots,n\}$ as $\hat{\psi}(i)=j$ when $\psi(I(i))=P(j)$ for indecomposable injective modules $I(i)$ and indecomposable projective modules $P(j)$.

\subsubsection{Higher Auslander--Reiten translates}
Let $B$ be an $n$-representation-finite algebra with $n$-cluster tilting object $M$. Then there is a map on the indecomposable objects $X$ in $\add(M)$ sending $X$ to $\tau_n(X)$ if $X$ is not projective and to $\nu(X)$ if $X$ is projective. Thus this map acts as the higher Auslander--Reiten translate $\tau_n=\tau \Omega^{n-1}$  on the indecomposable modules in the cluster tilting subcategory $\add(M)$ except for the projective modules, where it acts as $\nu$, the Nakayama functor sending a projective module $P$ to $D \Hom_B(P,B)$. We refer for example to \cite{I2} for more details.
This map on the indecomposable modules in $\add(M)$ corresponds to the grade bijection in the higher Auslander algebra $\End_B(M)$, see \cite[Theorem 4.6]{MTY}. 

\subsubsection{The Nakayama permutation}
Let $A$ be a Frobenius algebra. The Nakayama permutation sends $i$ to $\pi(i)$, where $\soc(P(i))=\top(P({\pi(i)}))$ for an indecomposable projective module $P(i)$, see for example \cite[Chapter 4]{SkoYam}. It turns out that the Nakayama permutation is a special case of the Auslander--Reiten permutation since the injective resolution of the indecomposable projective $P(i)$ is given by $0 \rightarrow P(i) \rightarrow I(\pi(i)) \rightarrow 0$.
For example for preprojective algebras of Dynkin type, this bijection is given by negative one times the action of the longest element of the Coxeter group on the simple roots, see for example \cite{G} for more details.

\subsubsection{The rowmotion bijection}
 The \emph{rowmotion bijection} for a distributive lattice $L$, given as the set of order ideals of a poset $P$, is defined as $\row(x)$ being the order ideal generated by the minimal elements in $P\setminus x$ for an order ideal $x$ of $P$. The rowmotion bijection has appeared under many different names and is a useful tool in the study of combinatorial properties of posets arising for example in Lie theory, see for example the references \cite{Str} and \cite{TW} that also contain a historical background on the rowmotion bijection.
It turns out that the rowmotion bijection is a special case of the grade permutation on the incidence algebra $A=KL$ of the distributive lattice $L$. 
Explicitly, if $L$ is a distributive lattice then $A$ is Auslander--Gorenstein. Vertices of $L$ label the simple modules of $A$, and $row(x)=\hat{\phi}(x)$ for every vertex $x$ of L. See \cite{IM}.

\begin{example} \label{examplerowmotioncoxeter}
We give one example that illustrates the Coxeter permutation and the rowmotion bijection.
Let $R$ be the poset 
\[\begin{tikzcd}
	1 && 2 \\
	& 0 && .
	\arrow[from=2-2, to=1-1]
	\arrow[from=2-2, to=1-3]
\end{tikzcd}\]
Then the distributive lattice $L$ of order ideals of $R$ is given as follows:
\[\begin{tikzcd}
	& {\{0,1,2\}} \\
	{\{0,1\}} && {\{0,2\}} \\
	& {\{0\}} \\
	& \emptyset
	\arrow[from=2-1, to=1-2]
	\arrow[from=2-3, to=1-2]
	\arrow[from=3-2, to=2-1]
	\arrow[from=3-2, to=2-3]
	\arrow[from=4-2, to=3-2]
\end{tikzcd}\]
The rowmotion bijection sends $\emptyset$ to $\{0\}$, $\{0\}$ to $\{0,1,2\}$, $\{0,1\}$ to $\{0,2\}$, $\{0,2\}$ to $\{0,1\}$ and $\{0,1,2\}$ to $\emptyset$.
We now look at the incidence algebra $A=KL$ of the distributive lattice $L$ with the following labeling of the vertices:
\[\begin{tikzcd}
	& 5 \\
	3 && 4 \\
	& 2 \\
	& 1
	\arrow[from=2-1, to=1-2]
	\arrow[from=2-3, to=1-2]
	\arrow[from=3-2, to=2-1]
	\arrow[from=3-2, to=2-3]
	\arrow[from=4-2, to=3-2]
\end{tikzcd}\]
Then the Cartan matrix of $A$ is given by
\[ \left( \begin{array}{ccccc}
1 & 0 & 0 & 0 & 0 \\
1 & 1 & 0 & 0 & 0 \\
1 & 1 & 1 & 0 & 0 \\
1 & 1 & 0 & 1 & 0 \\
1 & 1 & 1 & 1 & 1 \end{array} \right) \]
and the Coxeter matrix of $A$ is given by 
\[ \left( \begin{array}{ccccc}
0 & 0 & 0 & 0 & -1 \\
1 & 0 & 0 & 0 & -1 \\
0 & 0 & 0 & 1 & -1 \\
0 & 0 & 1 & 0 & -1 \\
0 & -1 & 1 & 1 & -1\end{array}  \right) = \left( \begin{array}{ccccc}
0 & 0 & 0 & 0 & 1 \\
1 & 0 & 0 & 0 & 0 \\
0 & 0 & 0 & 1 & 0 \\
0 & 0 & 1 & 0 & 0 \\
0 & 1 & 0 & 0 & 0\end{array}  \right)  \left( \begin{array}{ccccc}
1 & 0 & 0 & 0 & -1 \\
0 & -1 & 1 & 1 & -1 \\
0 & 0 & 1 & 0 & -1 \\
0 & 0 & 0 & 1 & -1 \\
0 & 0 & 0 & 0 & -1\end{array}  \right) .\]
Here we also spelled out the Bruhat decomposition of the Coxeter matrix. From this, we see that the rowmotion bijection on $L$ coincides with the Coxeter permutation of the incidence algebra $A$. This fact holds in general for distributive lattices as we will see later.
\end{example}

\section{Grade bijection}
We give a new characterisation of Auslander--Gorenstein algebras. First, we need two lemmas.
	\begin{lemma}
		\label{extformula}
		Let~$A$ be a finite-dimensional algebra with a simple $A$-module $S$.
		\begin{enumerate}  
			\item  Let $M$ be an $A$-module with minimal projective resolution
			$$\cdots \rightarrow P_i \rightarrow \cdots \rightarrow P_1 \rightarrow P_0 \rightarrow M \rightarrow 0.$$
			For $l \geq 0$, $\Ext_A^l(M,S) \neq 0$ if and only if there is a surjection $P_l \rightarrow S$.
			\item   Dually, let
			$$0 \rightarrow M \rightarrow I^0 \rightarrow I^1 \rightarrow \cdots \rightarrow I^i \rightarrow \cdots $$
			be a minimal injective coresolution of~$M$.
			For $l \geq 0$, $\Ext_A^l(S,M) \neq 0$ if and only if there is an injection $S \rightarrow I_l$.
		\end{enumerate}
	\end{lemma}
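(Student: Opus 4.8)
The plan is to compute the relevant $\Ext$-groups directly from the minimal (co)resolutions, exploiting minimality to show that all the transition maps in the $\Hom$-complex vanish, so that the $\Ext$-groups are literally the terms of that complex. For part~(1), I would apply $\Hom_A(-,S)$ to the given minimal projective resolution of $M$ and obtain the cochain complex $\Hom_A(P_0,S)\to\Hom_A(P_1,S)\to\cdots$. The key observation is that minimality of the resolution means each differential $P_l\to P_{l-1}$ has image contained in $\rad P_{l-1}$; since $S$ is simple, every homomorphism $P_{l-1}\to S$ annihilates $\rad P_{l-1}$, so the composite $P_l\to P_{l-1}\to S$ is zero. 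Hence all differentials of the complex $\Hom_A(P_\bullet,S)$ are zero, giving $\Ext_A^l(M,S)\cong\Hom_A(P_l,S)$ for every $l\ge 0$. Finally, $\Hom_A(P_l,S)\neq 0$ if and only if $S$ is a direct summand of $\top P_l$, which is equivalent to the existence of a surjection $P_l\twoheadrightarrow S$.

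For part~(2) I would dualise the argument, applying $\Hom_A(S,-)$ to the minimal injective coresolution $0\to M\to I^0\to I^1\to\cdots$. Minimality here means that each $I^l$ is the injective envelope of the $l$-th cosyzygy $Z^l:=\im(I^{l-1}\to I^l)=\ker(I^l\to I^{l+1})$, so the inclusion $Z^l\hookrightarrow I^l$ is essential and in particular $\soc I^l\subseteq Z^l$. Consequently, any nonzero map $S\to I^{l-1}$ (necessarily an injection, $S$ being simple) lands in $\soc I^{l-1}\subseteq\ker(I^{l-1}\to I^l)$, so the composite $S\to I^{l-1}\to I^l$ vanishes. As before, all differentials of $\Hom_A(S,I^\bullet)$ are zero, so $\Ext_A^l(S,M)\cong\Hom_A(S,I^l)$, and this is nonzero precisely when $S$ embeds into $\soc I^l$, i.e. when there is an injection $S\hookrightarrow I^l$. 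Alternatively, one could deduce (2) from (1) applied to the $A^{op}$-module $DM$ via the standard duality $D=\Hom_K(-,K)$, which interchanges minimal projective resolutions and minimal injective coresolutions and turns surjections onto a simple into injections out of the dual simple; I would include whichever version is shorter.

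I do not expect a genuine obstacle here: the statement is a routine consequence of minimality. The only point that needs care is phrasing the minimality conditions precisely (image in the radical for projective resolutions; essential socle condition for injective coresolutions) and checking the edge case $l=0$, where the claim reduces to the tautology that $\Hom_A(M,S)\neq 0$ iff $S$ is a composition factor of $\top M$, respectively iff $S\subseteq\soc M$.
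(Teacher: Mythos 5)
Your proof is correct: the paper itself only cites \cite[Corollary~2.5.4]{Ben} for this lemma, and your argument (differentials of $\Hom_A(P_\bullet,S)$ vanish because $\operatorname{Im}(P_l\to P_{l-1})\subseteq\rad P_{l-1}$ and maps to a simple kill the radical, dually the socle/essential-image argument for the injective coresolution) is precisely the standard proof underlying that reference. Both the radical and socle steps, and the identification $\Ext_A^l(M,S)\cong\Hom_A(P_l,S)$, $\Ext_A^l(S,M)\cong\Hom_A(S,I^l)$, are accurate, so nothing needs to be added.
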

	
	\begin{proof}
		See for example~\cite[Corollary~2.5.4]{Ben}.
	\end{proof}
\begin{lemma}\label{gradeSimpleSubmod} For every $M\in \mod A$ and every simple submodule $S$ of $M$ with $\grade(S)<\infty$ we have
$$\grade(S)\le \pdim(M).$$
\end{lemma}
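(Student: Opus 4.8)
The plan is to deduce the lemma from a general vanishing estimate comparing grade and projective dimension:
\[
(\star)\qquad \Ext_A^n(S,N)=0\ \text{ for all }N\in\mod A\text{ and all }n<\grade(S)-\pdim(N).
\]
Granting $(\star)$ and taking $n=0$, one obtains that $\Hom_A(S,N)\ne 0$ forces $\pdim(N)\ge\grade(S)$ for every module $N$ (trivially so if $\pdim(N)=\infty$, and by $(\star)$ otherwise). Since $S$ is a submodule of $M$ we have $\Hom_A(S,M)\ne 0$, and hence $\grade(S)\le\pdim(M)$, which is exactly the claim.

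To prove $(\star)$ I would induct on $e:=\pdim(N)$; there is nothing to prove when $e=\infty$. If $e=0$, then $N$ is projective, hence a direct summand of a free module $A^k$, so $\Ext_A^n(S,N)$ is a direct summand of $\Ext_A^n(S,A)^k$; this vanishes for $n<\grade(S)$ directly from the definition of the grade, which is precisely the range $n<\grade(S)-\pdim(N)$. If $e\ge 1$, choose a short exact sequence $0\to\Omega N\to P\to N\to 0$ with $P$ projective, so that $\pdim(\Omega N)=e-1$. For $n<\grade(S)-e$ one has both $n<\grade(S)$, whence $\Ext_A^n(S,P)=0$ by the base case, and $n+1<\grade(S)-(e-1)=\grade(S)-\pdim(\Omega N)$, whence $\Ext_A^{n+1}(S,\Omega N)=0$ by the inductive hypothesis; the fragment $\Ext_A^n(S,P)\to\Ext_A^n(S,N)\to\Ext_A^{n+1}(S,\Omega N)$ of the long exact sequence of $\Hom_A(S,-)$ then forces $\Ext_A^n(S,N)=0$. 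Conceptually $(\star)$ merely says that replacing $N$ by a length-$e$ projective resolution, each term of which has $\RHom_A(S,-)$ concentrated in cohomological degrees $\ge\grade(S)$, can decrease degrees by at most $e$; the induction is the hands-on version of this (and could also be run via the hyperext spectral sequence).

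The subtle point is choosing this intermediate statement, not proving it. The first natural attempt --- feeding $0\to S\to M\to M/S\to 0$ into $\Hom_A(-,A)$ and chasing the long exact sequence --- does not close, because neither the grade nor the projective dimension of $M/S$ is controlled by $\pdim(M)$: a submodule (and hence the quotient $M/S$) can have much larger projective dimension than $M$. The gain in $(\star)$ is that it resolves $M$ rather than $S$, so that only the trivial input $\Ext_A^{<\grade(S)}(S,\text{projective})=0$ is ever used. I expect the only thing needing care in the write-up is the index bookkeeping in the inductive step, which is routine.
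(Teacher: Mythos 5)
Your proof is correct, and it takes a genuinely different route from the paper. The paper fixes $g=\grade(S)>0$ and builds an explicit test module $N=\tau_g(S)=\D\Tr\Omega^{g-1}(S)$: dualising the truncated minimal projective resolution of $S$ (exactness being guaranteed by $\Ext_A^{<g}(S,A)=0$) shows $\idim(N)=g$ and $\Ext_A^g(S,N)\neq 0$, and then applying $\Hom_A(-,N)$ to $0\to S\to M\to M/S\to 0$ and using $\Ext_A^{g+1}(M/S,N)=0$ forces $\Ext_A^g(M,N)\neq 0$, hence $\pdim(M)\ge g$. You instead resolve $M$ (rather than $S$) and prove the uniform vanishing estimate $\Ext_A^n(S,N)=0$ for $n<\grade(S)-\pdim(N)$ by dimension shifting, using only the definitional input $\Ext_A^{<\grade(S)}(S,A)=0$; specialising to $n=0$, $N=M$ and the nonzero inclusion $S\hookrightarrow M$ gives the claim. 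Your argument is more elementary (no transpose, no duality $\D$, no minimality of resolutions) and in fact proves more: it needs neither simplicity of $S$ nor that $S$ is a submodule, only a nonzero map $S\to M$ and $\grade(S)<\infty$. What the paper's construction buys in exchange is the explicit witness $\tau_g(S)$, of injective dimension exactly $g$ with $I(S)$ at the end of its coresolution, a fact of independent interest (quoted from Ringel's appendix) and in the spirit of the coresolution bookkeeping used elsewhere in the paper; your $(\star)$ is the derived-category-style shadow of the same phenomenon. The index bookkeeping in your inductive step is correct as written, including the fragment $\Ext_A^n(S,P)\to\Ext_A^n(S,N)\to\Ext_A^{n+1}(S,\Omega N)$ of the long exact sequence.
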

\begin{proof}
If $\grade(S)=0$ then the inequality clearly holds, so let us assume that $\infty>g:=\grade(S)>0.$ By the proposition in Appendix D.1 in \cite{R}, there exists an $N\in \mod A$ such that $\idim(N)=g$ and $\Ext^g_A(S,N)\neq 0.$ More precisely, this is true for $N=\tau_g(S)=\D\Tr\Omega^{g-1}(S).$ As the proof of this statement is relatively short, we will include it here.

Let us consider a minimal projective resolution of $S$ truncated at the $g$-th term: 

$$P_g\xrightarrow{f_g}P_{g-1}\rightarrow\ldots\rightarrow P_1\xrightarrow{f_1} P_0 \rightarrow 0.$$
By the assumption $\grade(S)=g,$ we have that $\Ext^{i}(S,A)=0$ for every $0\le i<g$. Thus, we can apply $\Hom_A(-,A)$ to this truncated resolution and get an exact sequence of $A^{op}$-modules
$$P_g^*\xleftarrow{f_g^*}P_{g-1}^*\leftarrow\ldots\leftarrow P_1^*\xleftarrow{f_1^*} P_0^* \leftarrow 0.$$
By definition, the cokernel of $f_g$ is $\Omega^{g-1}(S)$ and so the cokernel of $f_g^*$ is $\Tr(\Omega^{g-1}(S)).$ So 
$$0\leftarrow \Tr\Omega^{g-1}(S)\leftarrow P_g^*\xleftarrow{f_g^*}P_{g-1}^*\leftarrow\ldots\leftarrow P_1^*\xleftarrow{f_1^*} P_0^* \leftarrow 0$$
is a minimal projective resolution of the $A^{op}$-module $\Tr(\Omega^{g-1}(S)).$ By applying the duality $D$ to this sequence, we get a minimal injective coresolution of $N=\tau_g(S).$ Moreover, $\D P_0^*\cong I(S)$ is an injective envelope of $S,$ since $P_0$ was a projective cover of $S$. This proves $\idim(N)=g$ and $\Ext^{g}(S,N)\neq0.$ For the last statement we used Lemma \ref{extformula} (2). 

Now let us turn our attention to the proof of our lemma and consider the following short exact sequence of $A$-modules
$$0\to S\to M\to M/S \to 0,$$
where the first map is the canonical inclusion of the submodule $S$ into $M.$ Applying $\Hom_A(-,N)$ to this, we get the usual long exact $\Ext$ sequence. This, combined with $\Ext^g_A(S,N)\neq 0$ implies that $\Ext^{g}_A(M,N)\neq 0$ or $\Ext^{g+1}_A(M/S,N)\neq 0$ has to hold. However, since $\idim(N)=g$ we have $\Ext^{g+1}_A(M/S,N)=0,$ so  $\Ext^{g}_A(M,N)\neq 0,$ and thus $\pdim(M)\ge g.$
\end{proof}

\begin{theorem}\label{AGcondition}
Let $A$ be a finite-dimensional algebra. Then the following are equivalent:
\begin{enumerate}[(i)]
    \item $A$ is Auslander--Gorenstein.
    \item For every simple $A$-module $S$ the following holds: $\grade(S)=\pdim(I(S))<\infty.$
\end{enumerate}
\end{theorem}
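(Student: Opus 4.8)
The plan is to prove the two implications separately, using the two lemmas just established as the main technical inputs.

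\medskip

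\noindent\textbf{Direction $(i)\Rightarrow(ii)$.}
First I would assume $A$ is Auslander-Gorenstein, so in particular Iwanaga-Gorenstein, hence every simple module has finite grade by Lemma \ref{finitegradelemma}, and $\idim A_A = \idim {}_AA < \infty$. Fix a simple module $S$ with injective envelope $I(S)$ and set $g:=\grade(S)$. The inequality $\grade(S)\le \pdim(I(S))$ is immediate from Lemma \ref{gradeSimpleSubmod} applied to the inclusion $S\hookrightarrow I(S)$ (once we know $\pdim I(S)<\infty$, which follows since $A$ is Iwanaga-Gorenstein so $\idim{}_AA<\infty$, and injective right modules have finite projective dimension bounded by $\idim{}_AA$). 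For the reverse inequality $\pdim(I(S))\le g$, I would invoke the $n$-Gorenstein condition: since $A$ is $n$-Gorenstein for all $n$, Theorem \ref{Iyama theorem} tells us that $g$ is a dominant number of $A$ and that $I^g$ (the $g$-th term in the minimal injective coresolution of $A$) has $\pdim I^g = g$. The key point is then that $I(S)$ must appear as a summand of $I^g$: indeed, by Lemma \ref{extformula}(2) applied to $M=A$, $\Ext^g_A(S,A)\neq 0$ (which holds as $g=\grade S$) is equivalent to $S$ embedding into $I^g$, i.e. $I(S)$ is a summand of $I^g$. Since all summands of $I^g$ have projective dimension $\le g$ (by the $g$-Gorenstein property $\pdim I^i\le i$ combined with $\pdim I^g=g$ forcing... actually simply $\pdim I^g \le g$ from the defining inequality), we get $\pdim I(S)\le g$. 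Combined with the first inequality this gives $\grade(S)=\pdim(I(S))$.

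\medskip

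\noindent\textbf{Direction $(ii)\Rightarrow(i)$.}
Now assume $\grade(S)=\pdim(I(S))<\infty$ for every simple $S$. I must show $A$ is $n$-Gorenstein for all $n$ and $\idim A<\infty$. The finiteness of $\idim A_A$ should follow because each indecomposable injective right module is an $I(S)$ for some simple $S$, so has finite projective dimension, and then one dualizes or uses that $\idim{}_AA = \pdim D(A)_{A^{op}}$... more directly, $\idim A_A$ is controlled once we show the Gorenstein property. For the $n$-Gorenstein condition, consider the minimal injective coresolution $0\to A\to I^0\to I^1\to\cdots$ and fix $l$; I need $\pdim I^l\le l$. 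Write $I^l=\bigoplus I(S_j)$ over the simple socles $S_j$ of the indecomposable summands. By Lemma \ref{extformula}(2) with $M=A$, $S_j\hookrightarrow I^l$ means $\Ext^l_A(S_j,A)\neq 0$, and by minimality of the coresolution $\Ext^i_A(S_j,A)=0$ for $i<l$ is \emph{not} automatic — but $\grade(S_j)\le l$ certainly holds since $\Ext^l_A(S_j,A)\neq 0$. The hypothesis then gives $\pdim I(S_j)=\grade(S_j)\le l$, hence $\pdim I^l = \max_j \pdim I(S_j)\le l$, which is exactly the $l$-Gorenstein inequality. This works for all $l$, so $A$ is $n$-Gorenstein for all $n$.

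\medskip

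\noindent\textbf{Expected main obstacle.}
The delicate point is the finiteness of $\idim A$ in the direction $(ii)\Rightarrow(i)$: the definition of Auslander-Gorenstein requires both the infinite family of $n$-Gorenstein conditions \emph{and} $\idim A<\infty$, and the latter does not obviously follow from a statement about grades of simples alone. I would handle this by arguing that the hypothesis forces $\pdim I^l\le l$ for all $l$, and then showing the minimal injective coresolution of $A$ must terminate: if it did not, one could extract a simple $S$ with $\grade(S)=\infty$ (a simple submodule of $I^l$ for infinitely many $l$, or more carefully using that $\Ext^l_A(S,A)\ne 0$ for arbitrarily large $l$ would contradict $\pdim I(S)<\infty$ via a dimension-shift / the bound $\pdim I(S)\ge l$), contradicting finiteness of the grade. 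Getting this termination argument clean — ruling out an infinite coresolution purely from the grade hypothesis — is the step that will require the most care, and it is where Lemma \ref{extformula} and a dimension-shifting argument relating $\Ext^l_A(S,A)$ to $\pdim I(S)$ will be doing the real work.
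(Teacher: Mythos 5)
Your direction (i)$\Rightarrow$(ii) and the $n$-Gorenstein half of (ii)$\Rightarrow$(i) follow the paper's proof essentially verbatim: Lemma \ref{gradeSimpleSubmod} gives $\grade S\le\pdim I(S)$, Lemma \ref{extformula}(2) identifies $\grade S$ as the first degree in which $I(S)$ occurs in the minimal injective coresolution of $A$, and $\pdim I^l\le l$ for all $l$ is obtained exactly as you describe (the detour through Theorem \ref{Iyama theorem} is superfluous). The genuine gap is precisely the step you flag at the end: the finiteness of $\idim A$. Your proposed termination argument does not work. If the coresolution $0\to A\to I^0\to I^1\to\cdots$ were infinite, a simple $S$ embedding into $I^l$ for arbitrarily large $l$ would \emph{not} have infinite grade --- its grade is the smallest such $l$, which is finite --- so there is no contradiction with grade-finiteness. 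And the bound you hope to use instead, ``$I(S)$ a summand of $I^l$ implies $\pdim I(S)\ge l$'', is false in general: $I(S)$ first occurs in degree $\grade S=\pdim I(S)$ but may reoccur in arbitrarily higher degrees; excluding this is exactly the extra ``diagonal'' condition discussed in Corollary \ref{corollarydiagonal}, not a consequence of hypothesis (ii).

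Moreover, no purely formal dimension-shift can close this gap. What (ii) gives you directly is $\pdim D({}_AA)=\max_S\pdim I(S)<\infty$, i.e. $\idim{}_AA<\infty$ (every projective left module has finite injective dimension), whereas Auslander-Gorensteinness requires $\idim A_A<\infty$; passing from one side to the other is a Gorenstein-symmetry statement that is open for general algebras. The paper closes it by invoking \cite[Corollary 5.5 (b)]{AR}: for an algebra that is $n$-Gorenstein for all $n$ (which you have already established at this point), $\idim A_A=\idim{}_AA$. Your proof needs this (or an equivalent) input; once it is added, the rest of your argument goes through and coincides with the paper's.
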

\begin{proof}
Let us denote a minimal injective coresolution of $A$ by 
$$0\rightarrow A \rightarrow I^0 \rightarrow I^1 \rightarrow \ldots.$$
    Assume that $A$ is Auslander--Gorenstein. Then $A$ is Iwanaga-Gorenstein and  we know that $\grade(S)<\infty$ by Lemma \ref{finitegradelemma}. Now we can apply Lemma \ref{gradeSimpleSubmod} to obtain $\grade(S)\le \pdim(I(S)).$

    We know that the grade of a simple module $S$ is the smallest integer $r$ such that $I(S)$ is a direct summand of $I^r.$  Hence, $I(S)$ appears as a direct summand of $I^{\grade(S)}.$ Because $A$ is assumed to be $n$-Gorenstein for every $n,$ we have $\pdim(I(S))\le\pdim(I^{\grade(S)})\le \grade(S).$ 

    For the other direction, let us assume that $(ii)$ holds for $A.$ Let $S$ be an arbitrary simple $A$-module. By assumption, $\grade(S)<\infty.$ Then by the above discussion we know that $I(S)$ can only appear as a direct summand of $I^r$ if $r\ge \grade(S).$ This, combined with $\grade(S)=\pdim(I(S))$ for every simple $S,$ yields that $\pdim(I^r)\le r$ holds for every natural number $r.$ The assumption that every injective module has finite projective dimension implies that every projective $A^{op}$-module has finite injective dimension. We get from \cite[Corollary 5.5.(b)]{AR} that $\idim A_A = \idim {}_{A}A$ for algebras $A$ that are $n$-Gorenstein for all $n$ and these observations already imply that $(i)$ holds. 
\end{proof}
We give two corollaries:

\begin{corollary} \label{corollarydiagonal}
An Auslander regular algebra is diagonal if and only if every simple module is perfect. 

\end{corollary}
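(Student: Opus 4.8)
The plan is to unwind both directions using Theorem \ref{AGcondition}. Recall that $A$ is diagonal Auslander regular if it is Auslander regular and every direct summand $X$ of $I^l$ satisfies $\pdim X = l$, while a simple module $S$ is perfect if $\grade S = \pdim S$. Since $A$ is assumed Auslander regular throughout, Theorem \ref{AGcondition} gives us $\grade S = \pdim I(S) < \infty$ for every simple $S$, and the proof of that theorem records the key fact that $I(S)$ appears as a direct summand of $I^l$ precisely for $l \ge \grade S$, with $\grade S$ being the smallest such index.

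For the forward direction, suppose $A$ is diagonal. Let $S$ be a simple module and set $l = \grade S$. Then $I(S)$ is a direct summand of $I^l$, so the diagonal condition forces $\pdim I(S) = l = \grade S$. On the other hand, the inclusion $S \hookrightarrow I(S)$ together with $\pdim I(S) \geq \pdim S$ would go the wrong way, so instead I would argue as follows: from the short exact sequence $0 \to S \to I(S) \to I(S)/S \to 0$ one gets $\pdim I(S) \le \max\{\pdim S, \pdim(I(S)/S)\}$, but more useful is the reverse — since $\grade S = \pdim I(S)$ by Theorem \ref{AGcondition} and $\grade S \le \pdim S$ always (as $\Ext^{g_S}_A(S,A) \ne 0$ forces $\pdim S \ge g_S$), it suffices to show $\pdim S \le \grade S$. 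For this, note $\Ext^i_A(S, A) = 0$ for $i > \pdim I(S) = \grade S$ is automatic; the point is rather to bound $\pdim S$ directly. I expect the cleanest route is: $S$ embeds in $I(S)$ which embeds in $I^{\grade S}$, and applying Lemma \ref{gradeSimpleSubmod}-type reasoning (or directly the grade/pdim interplay from Iyama's Theorem \ref{Iyama theorem}, noting $\grade S$ is a dominant number with $\pdim I^{\grade S} = \grade S$) gives $\pdim S \le \grade S$. Hence $S$ is perfect.

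For the converse, suppose every simple is perfect. I must show every summand $X$ of $I^l$ has $\pdim X = l$. It suffices to treat indecomposable injective summands $X = I(S)$. If $I(S)$ is a summand of $I^l$ then $l \ge \grade S$, and $\pdim I(S) = \grade S$ by Theorem \ref{AGcondition}; so I need $\grade S = l$ exactly when $I(S)$ is a summand of $I^l$, i.e. that $I(S)$ is a summand of $I^l$ for only one value of $l$, namely $l = \grade S = \pdim I(S)$. This is where perfectness enters: perfectness says $\grade S = \pdim S$, and combined with $\pdim I(S) = \grade S$ we get $\pdim S = \pdim I(S)$; using $0 \to S \to I(S) \to I(S)/S \to 0$ and $\pdim S = \pdim I(S)$ one deduces $\pdim(I(S)/S) < \pdim S$, hence inductively every composition factor "above" behaves well, and an induction on the length of the injective coresolution shows that $I(S)$ cannot reappear as a summand in $I^l$ for $l > \grade S$ — essentially because the syzygies stabilize the projective dimension at the right spot. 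The main obstacle I anticipate is precisely this bookkeeping: showing that the perfectness of all simples propagates to the statement that $I^l$ has all summands of projective dimension exactly $l$, rather than merely $\le l$. I would handle it by combining the minimality of the injective coresolution of $A$ with Lemma \ref{extformula}(2) (which identifies when $S \hookrightarrow I^l$, equivalently when $I(S)$ is a summand of $I^l$, with $\Ext^l_A(S,A) \ne 0$) and the observation that $\Ext^l_A(S,A) \ne 0$ for the top nonzero $l$ is controlled by $\pdim S$ on the $A^{op}$-side, which perfectness pins down.
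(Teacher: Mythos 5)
Both directions of your proposal stall at exactly the point where the real work happens, and the missing tool in each case is the same: the formula $\pdim S=\max\{j \mid \Ext_A^j(S,A)\neq 0\}$, valid since $\pdim S<\infty$ (finite global dimension; see \cite[Lemma VI.5.5]{ARS}), used in tandem with Lemma \ref{extformula}(2), which says $\Ext_A^j(S,A)\neq 0$ exactly when $I(S)$ is a direct summand of $I^j$. In the forward direction you correctly get $\pdim I(S)=l=\grade S$ from diagonality, but you still need $\pdim S\le\grade S$, and neither of your suggested tools can deliver it: Lemma \ref{gradeSimpleSubmod} gives the inequality $\grade S\le\pdim I(S)$ (the wrong direction, and already known), and Theorem \ref{Iyama theorem} says nothing about $\pdim S$. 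Moreover, your parenthetical claim that $\Ext_A^i(S,A)=0$ for $i>\pdim I(S)$ is ``automatic'' is false for a general Auslander regular algebra: for a non-diagonal one, some $I(S)$ occurs in $I^i$ with $i>\pdim I(S)$, so this nonvanishing in high degrees is precisely what diagonality rules out. The correct route is: diagonality forces $I(S)$ to occur only in $I^l$, so by Lemma \ref{extformula} one has $\Ext_A^j(S,A)=0$ for $j\neq l$ and $\neq 0$ for $j=l$, whence the max-formula gives $\pdim S=l=\grade S$.

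In the converse, your central step --- deducing $\pdim(I(S)/S)<\pdim S$ from $\pdim S=\pdim I(S)$ via $0\to S\to I(S)\to I(S)/S\to 0$ --- is not a valid homological implication (one can have $\pdim(I(S)/S)=\pdim S+1$ while $\pdim S=\pdim I(S)$), and the ensuing induction (``syzygies stabilize the projective dimension at the right spot'') is not an argument. No induction is needed: perfectness together with Theorem \ref{AGcondition} gives $\pdim S=\grade S=\pdim I(S)=l$, so $\Ext_A^j(S,A)=0$ for $j<l$ by the definition of grade and for $j>l$ simply because $j>\pdim S$; by Lemma \ref{extformula}(2), $I(S)$ therefore appears only as a summand of $I^l$, where its projective dimension is exactly $l$, and since every indecomposable summand of every term of the coresolution is of this form, $A$ is diagonal. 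Your closing remark about the top nonvanishing $\Ext$ being controlled by $\pdim S$ gestures at this simpler argument, but as written your main route does not go through.
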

\begin{proof}
First, assume that $A$ is diagonal Auslander regular.
Then in a minimal injective coresolution of $A$:
$$0 \rightarrow A \rightarrow I^0 \rightarrow I^1 \rightarrow \cdots \rightarrow I^n \rightarrow 0$$
every direct summand $X$ of a term $I^i$ has projective dimension equal to $i$.
Now, let $S$ be a simple module with injective envelope $I(S)$ and assume that $I(S)$ has projective dimension $l$. Since every simple module of this algebra has finite grade by Lemma \ref{finitegradelemma}, $I(S)$ must show up somewhere in the minimal injective resolution of $A.$ Thus, $I(S)$ must be a direct summand of $I^l$ and cannot appear as any other direct summand of a term $I^j$ for $j \neq l$. Applying Lemma \ref{extformula}, we get that $\Ext_A^l(S,A) \neq 0$, while $\Ext_A^j(S,A)=0$ for $j \neq l$. As $A$ is Auslander regular and so $\pdim(S)<\infty$, we can use the formula $\pdim(S)=\max\{ \ j \ | \ \Ext_A^j(S,A)\neq 0\}$ from \cite[Lemma VI.5.5]{ARS} to obtain that $S$ is perfect.

Now assume that every simple $A$-module $S$ is perfect.
When $I(S)$ denotes the injective envelope of $S$ with projective dimension $l$, then 
\begin{equation} \label{equation1}
    \pdim S= \grade S= \pdim I(S)=l.
\end{equation}
Here we used Theorem \ref{AGcondition}.
Thus, $\Ext_A^l(S,A) \neq 0$ and $\Ext_A^j(S,A)=0$ for $j\neq l$.
By Lemma \ref{extformula}, this means that $I(S)$ only appears as a direct summand of $I^l$, and furthermore, we know by (\ref{equation1}) that $\pdim I(S)=l$, showing that $A$ is diagonal.
\end{proof}

We remark that in \cite[3.6.2]{I3}, Iyama posed the following question:
\begin{question}
Let $A$ be Auslander regular and diagonal. Is then $A^{op}$ diagonal as well?
\end{question}
Using Corollary \ref{corollarydiagonal}, we can reformulate Iyama's question:
\begin{question}
Let $A$ be an Auslander regular algebra such that every simple right $A$-module is perfect. Is then every simple left $A$-module perfect as well?
\end{question}
\begin{corollary}
The set dominant numbers of an Auslander--Gorenstein algebra coincides with the set of projective dimensions of indecomposable injective modules.

\end{corollary}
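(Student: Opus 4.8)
The plan is to combine the characterisation of dominant numbers from Iyama's theorem with the grade formula proved in Theorem~\ref{AGcondition}. Recall that by Theorem~\ref{Iyama theorem}, applied to an Auslander-Gorenstein algebra $A$ (which is $n$-Gorenstein for all $n$), the set of dominant numbers of $A$ coincides with the set of grades $\grade(S)$ of simple $A$-modules. By Lemma~\ref{finitegradelemma} every simple module has finite grade, so this is an honest finite set. Hence it suffices to show that the set $\{\grade(S) \mid S \text{ simple}\}$ equals the set $\{\pdim I(S) \mid S \text{ simple}\}$, where $I(S)$ ranges over the indecomposable injective modules.

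First I would invoke Theorem~\ref{AGcondition}: since $A$ is Auslander-Gorenstein, part~(ii) of that theorem gives $\grade(S) = \pdim(I(S)) < \infty$ for every simple module $S$. The assignment $S \mapsto I(S)$ is a bijection between (iso-classes of) simple modules and (iso-classes of) indecomposable injective modules. Therefore the equality $\grade(S)=\pdim(I(S))$, read off for each $S$, immediately identifies the set $\{\grade(S)\}$ of all grades with the set $\{\pdim(I(S))\}$ of all projective dimensions of indecomposable injectives. Chaining the two set equalities — dominant numbers $=$ grades $=$ projective dimensions of indecomposable injectives — completes the proof.

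So the proof is essentially a two-line assembly of results already established in the excerpt. I do not expect any genuine obstacle; the only point requiring a word of care is to note that we want the set of projective dimensions of \emph{indecomposable} injectives, and that every indecomposable injective is of the form $I(S)$ for a unique simple $S$ (equivalently, that indecomposable injectives are in bijection with the simples via taking socles), so that ranging over simples $S$ and ranging over indecomposable injectives $I$ produce the same set of integers. One could also phrase this directly: $l$ is a dominant number iff $I(S)$ is a summand of $I^l$ for some simple $S$ with $l = \grade(S)$, iff $l = \pdim(I(S))$ by Theorem~\ref{AGcondition}, iff $l$ is the projective dimension of some indecomposable injective.

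\begin{proof}
Let $A$ be Auslander-Gorenstein. By Lemma~\ref{finitegradelemma} every simple $A$-module has finite grade, and since $A$ is $n$-Gorenstein for all $n$, Theorem~\ref{Iyama theorem} shows that the set of dominant numbers of $A$ coincides with the set $\{\grade(S) \mid S \text{ simple}\}$. By Theorem~\ref{AGcondition} we have $\grade(S) = \pdim(I(S)) < \infty$ for every simple module $S$, where $I(S)$ denotes the injective envelope of $S$. Since the map $S \mapsto I(S)$ induces a bijection between iso-classes of simple modules and iso-classes of indecomposable injective modules, it follows that
$$\{\grade(S) \mid S \text{ simple}\} = \{\pdim(I(S)) \mid S \text{ simple}\} = \{\pdim I \mid I \text{ indecomposable injective}\}.$$
Combining the two displayed identifications proves the claim.
\end{proof}
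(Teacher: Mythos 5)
Your overall plan is the same as the paper's --- combine Theorem~\ref{Iyama theorem} with Theorem~\ref{AGcondition} --- but as written there is a gap at the key citation. Theorem~\ref{Iyama theorem}, as stated, identifies the set of dominant numbers with the set of grades of \emph{all} $A$-modules, not with the set of grades of \emph{simple} $A$-modules, and your proof rests on the stronger statement. The containment you do get for free is $\{\grade(S) \mid S \text{ simple}\} \subseteq \{\text{dominant numbers}\}$, since the grade of a simple module is in particular the grade of an $A$-module; combined with $\grade(S)=\pdim I(S)$ from Theorem~\ref{AGcondition}, this shows every projective dimension of an indecomposable injective is a dominant number. What is not justified is the reverse containment: a dominant number $l$ is a priori only the grade of some module $M$, and there is no general reason why a grade value attained by an arbitrary module must be attained by a simple module --- that passage from modules to simples is essentially the nontrivial content of the corollary, so quoting it as part of Theorem~\ref{Iyama theorem} overstates that theorem.

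The fix is short and is exactly the route the paper takes: invoke the first assertion of Theorem~\ref{Iyama theorem}, namely that any dominant number $l$ satisfies $\pdim I^l = l$. Since the projective dimension of a direct sum is the maximum over its indecomposable summands, some indecomposable summand $I(S)$ of $I^l$ has $\pdim I(S)=l$, and then $\grade(S)=l$ by Theorem~\ref{AGcondition}, so $l$ lies in the set of projective dimensions of indecomposable injectives (equivalently, in the set of grades of simple modules). With this step inserted your argument is complete and coincides with the paper's proof; your parenthetical ``one could also phrase this directly'' remark is in fact closer to the correct argument, but it too silently uses $\pdim I^l=l$, which you never cite.
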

\begin{proof}
This follows directly from Theorem \ref{Iyama theorem}, which tells us that the dominant numbers all come from the set of projective dimensions of injective modules in an Auslander--Gorenstein algebra. Thus, we only need to show, that all the projective dimensions of indecomposable injective modules are dominant numbers.

By Theorem \ref{AGcondition}, the projective dimension of an indecomposable injective module coincides with the grade of its socle. This finishes the proof, as Theorem \ref{Iyama theorem} tells us that the grade of any $A$-module is a dominant number. 
\end{proof}
We give an application for blocks of category $\mathcal{O}$. Note that these are most often wild, and thus, their modules cannot be classified in general.
For background information on blocks of category $\mathcal{O}$ we refer for example to \cite{Hu}, and for more on Lusztig's $a$-function see for example \cite{Maz}.

\begin{proposition}
Let $A$ be a block of category $\mathcal{O}$.
The set of grades of the $A$-modules coincides with two times the values of Lusztig's $a$-function. 
\end{proposition}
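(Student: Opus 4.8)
The plan is to combine three ingredients: the corollary just proved (that the dominant numbers of an Auslander-Gorenstein algebra coincide with the projective dimensions of the indecomposable injective modules), Iyama's Theorem \ref{Iyama theorem} (that the dominant numbers coincide with the set of grades of $A$-modules below the relevant bound, and that this set is left-right symmetric), and the known relationship between the homological invariants of a block $A$ of category $\mathcal{O}$ and Lusztig's $a$-function. First I would recall that a block $A$ of category $\mathcal{O}$ is Auslander regular by \cite{KMM}, so all the machinery developed above applies; in particular $A$ has finite global dimension, so the set of grades of $A$-modules is a finite subset of $\{0,1,\dots,\gldim A\}$ and, by the corollary and Theorem \ref{Iyama theorem}, it equals the set of dominant numbers, which in turn equals $\{\pdim I \mid I \text{ indecomposable injective}\}$.

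The key step is to identify this set with $\{2a(w) \mid w \in W_A\}$, where $W_A$ indexes the simple modules in the block and $a$ is Lusztig's $a$-function. For this I would use the parametrisation of simple modules $L(w)$ (equivalently, dominant weights/cells) in the block, together with the computation — available in the category $\mathcal{O}$ literature, e.g.\ via the Koszul grading and the results surveyed in \cite{Maz}, \cite{Hu} — that the projective dimension of the indecomposable injective $I(w)$, or equivalently the grade of the simple $L(w)$, equals $2a(w)$. Concretely, $A$ is Koszul and the graded shift in which a simple appears in the injective coresolution of $A$ is governed by the $a$-function; doubling occurs because the Koszul dual shift and the homological degree coincide for Koszul algebras, so the homological degree at which $D\Ext^{g}(L(w),A)$ lives is $2a(w)$. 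Putting the pieces together: for every simple $L(w)$ we get $\grade L(w) = \pdim I(w) = 2a(w)$, and as $w$ ranges over the indexing set this exhausts both the set of grades of $A$-modules (by the corollary, since every grade is realised by a simple module's grade, which equals the projective dimension of the corresponding indecomposable injective) and the set $\{2a(w)\}$.

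**The main obstacle** I expect is pinning down precisely the statement ``$\pdim I(w) = 2a(w)$'' with a clean citation rather than a re-derivation: the $a$-function is usually related to the position of $L(w)$ in the socle filtration or to the length of the longest element in the two-sided cell, and translating this into the projective dimension of the injective envelope requires the Koszulity of $A$ and the precise form of the Kazhdan-Lusztig combinatorics controlling $\Ext^\bullet(L(w),A)$. I would handle this by citing the relevant results on graded category $\mathcal{O}$ (the $a$-function as the top nonzero graded degree of certain $\Ext$-groups, together with the Koszul self-duality that converts grading degree into homological degree), and then the rest is a formal consequence of Theorem \ref{AGcondition} and the corollary above. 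One subtlety to check is that no grade of a (non-simple) $A$-module lies strictly between consecutive values $2a(w)$ and is not itself of the form $2a(w')$; but this is exactly ruled out by Theorem \ref{Iyama theorem}, which says every grade of an $A$-module is a dominant number, hence of the form $\pdim I(w) = 2a(w)$ for some $w$ by the preceding corollary.
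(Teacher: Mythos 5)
Your proposal is correct and follows essentially the same route as the paper: reduce via Theorem \ref{Iyama theorem}, the corollary on dominant numbers, and Theorem \ref{AGcondition} to the set of projective dimensions of indecomposable injectives, then invoke the category $\mathcal{O}$ literature for the identification with twice the $a$-function. The paper simply cites \cite[Theorem 20]{Maz} for that last step, which is exactly the external input you identified (your Koszul-duality sketch is only a heuristic for it, but since you defer to the citation this causes no gap).
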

\begin{proof}
By Theorem \ref{Iyama theorem}, the set of grades of $A$-modules coincides with the set of grades of simple $A$-modules since $A$ is Auslander regular by the main result of \cite{KMM}.
Now, by our Theorem \ref{AGcondition}, the grades of simple $A$-modules coincide with the projective dimensions of the indecomposable injective $A$-modules.
The set of values of those projective dimensions coincides with two times the values of Lusztig's $a$-function by \cite[Theorem 20]{Maz}.
\end{proof}

Our next goal is to show that the grade permutation coincides with the Auslander--Reiten permutation. For this, we first need some preliminary work.
\begin{proposition} \label{gradeBijCharacterisation}

Let $A$ be Auslander--Gorenstein. Let $S$ be a simple $A$-module, with injective envelope $I$ and grade $r$. Let $\mathcal{P}$ be the set of indecomposable projective $A$-modules such that $I$ appears in degree $r$ in the injective resolution of $P$. Among the projectives of $\mathcal{P}$, there is a unique one whose top has cograde $r$. This is $\phi(S)$. The injective $I$ appears exactly once in the coresolution of the projective cover of
$\phi(S)$.
\end{proposition}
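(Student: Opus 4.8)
The plan is to rewrite the whole statement in terms of composition factors of the $A^{op}$-module $X:=\Ext_A^r(S,A)$. First, by Lemma \ref{extformula}(2), the injective envelope $I=I(S)$ is a direct summand of the $r$-th term of the minimal injective coresolution of an indecomposable projective $P$ exactly when $\Ext_A^r(S,P)\neq 0$. Writing $P=e_jA$ and using that $e_jA$ is a direct summand of $A$, hence flat as a right module, I would apply $e_jA\otimes_A(-)$ to $\Hom_A(-,A)$ of a minimal projective resolution of $S$; since $\Ext_A^i(S,A)=0$ for $i<r$ (as $\grade S=r$), this gives a natural isomorphism $\Ext_A^r(S,e_jA)\cong e_jX$. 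Hence $P(j)\in\mathcal P$ if and only if $e_jX\neq 0$, i.e.\ (since $e_j(-)$ is exact) if and only if the simple $A^{op}$-module $DS(j)$ is a composition factor of $X$, equivalently $S(j)$ is a composition factor of the $A$-module $DX$.

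Next I would locate $\phi(S)$ inside this picture: by definition $\phi(S)=\top(DX)$, which is simple because the grade bijection is well defined, and since $D$ interchanges largest semisimple quotients and submodules, $\soc X$ is simple with $D(\soc X)=\phi(S)$, i.e.\ $\soc X=D\phi(S)$. In particular $\phi(S)$ is a composition factor of $DX$, so $P(\phi(S))\in\mathcal P$, and $\cograde(\phi(S))=\grade(S)=r$ by \cite[Theorem 2.10]{I}. This already exhibits a member of $\mathcal P$ whose top has cograde $r$.

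For uniqueness I would pass to $A^{op}$-grades via $\cograde_A(S(j))=\grade_{A^{op}}(DS(j))$, so the claim becomes a statement about the composition factors of $X$. Since $A$ is Auslander-Gorenstein it satisfies the Auslander condition, so every submodule of $X=\Ext_A^r(S,A)$ has $A^{op}$-grade at least $r$; a short exact sequence argument with the long exact $\Ext_{A^{op}}(-,A)$-sequence extends this to every subquotient, hence to every composition factor of $X$, so every $P(j)\in\mathcal P$ automatically has $\cograde(S(j))\ge r$. To see that $\soc X$ is the only composition factor of $A^{op}$-grade exactly $r$, and occurs with multiplicity one, I would use that $X$ is pure of grade $r$ and that $\Ext_{A^{op}}^r(X,A)\cong S$ for Auslander-Gorenstein $A$ (here one uses the left--right symmetry of Auslander-Gorensteinness, recorded via \cite[Corollary 5.5]{AR}, together with the standard biduality for pure modules). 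Filtering $X$ by a composition series and following $\Ext_{A^{op}}^\bullet(-,A)$ through the resulting short exact sequences, composition factors of grade $>r$ contribute nothing in cohomological degree $r$, while a composition factor of grade $r$ forces a nonzero subquotient of the simple module $\Ext_{A^{op}}^r(X,A)=S$; simplicity of $S$ then forces exactly one such factor, with multiplicity one, and it must be $\soc X=D\phi(S)$. This identifies the unique projective as $P(\phi(S))$. Finally, for the last assertion, the total multiplicity with which $I$ occurs across the minimal injective coresolution of $P(\phi(S))=e_kA$ (where $\phi(S)=S(k)$) is $\sum_{m\ge0}\dim_K e_k\Ext_A^m(S,A)$, i.e.\ the number of occurrences of $DS(k)$ among the modules $\Ext_A^m(S,A)$; for $m<r$ these vanish, for $m>r$ all their composition factors have $A^{op}$-grade $\ge m>r=\grade_{A^{op}}(DS(k))$, and for $m=r$ the count is one by the uniqueness just established, so $I$ appears exactly once, in degree $r$.

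The hard part will be the uniqueness step. The reductions to $X$ and to $A^{op}$-grades, and the lower bound $\cograde(S(j))\ge r$, are routine bookkeeping with flatness and duality; but pinning down $\soc X$ as the \emph{unique} composition factor of $\Ext_A^r(S,A)$ of minimal grade, with multiplicity one, genuinely requires the purity and biduality formalism for Auslander-Gorenstein algebras, used in combination with Iyama's results (Theorem \ref{Iyama theorem} and \cite[Theorem 2.10]{I}).
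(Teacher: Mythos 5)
Your reduction to the $A^{op}$-module $X=\Ext_A^r(S,A)$ is sound and is in fact how the paper's proof also begins: $e_jX\cong\Ext_A^r(S,e_jA)$ identifies the support of $X$ with $\mathcal{P}$, the socle of $X$ is $D\phi(S)$, and $\cograde(\phi(S))=\grade(S)=r$ by \cite[Theorem 2.10]{I}; your lower bound $\cograde(\top P(j))\ge r$ for all $P(j)\in\mathcal{P}$ is also fine (submodules of $X$ have grade $\ge r$ by the Auslander condition, and quotients of one such submodule by another then do as well, by the long exact sequence). The genuine gap is exactly in the step you flag as hard. The paper does not reprove it: it imports from part (ii) of the proof of \cite[Theorem 2.10]{I} the statement that every composition factor of $X$ other than $\soc X=D\phi(S)$ has grade strictly greater than $r$, and that $\soc X$ occurs only once; uniqueness, the identification of $\phi(S)$, and the ``exactly once'' claim all follow from this. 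Your substitute argument for it has two real problems. First, the assertion $\Ext^r_{A^{op}}(X,A)\cong S$ is not ``standard biduality for pure modules'': the standard purity statement for Auslander--Gorenstein rings only gives a natural embedding of $S$ into $\Ext^r_{A^{op}}(\Ext^r_A(S,A),A)$ with cokernel of grade at least $r+2$, and upgrading this embedding to an isomorphism is essentially of the same depth as the fact you are trying to establish. Second, even granting that isomorphism, the composition-series bookkeeping does not close: from $0\to X_{i-1}\to X_i\to T_i\to 0$ one only gets $0\to\Ext^r(T_i,A)\to\Ext^r(X_i,A)\to\Ext^r(X_{i-1},A)\to\Ext^{r+1}(T_i,A)$, so the contribution of a grade-$r$ factor sitting low in the filtration can be annihilated at a later stage by a connecting map into $\Ext^{r+1}$ of a higher factor. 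Nothing in your sketch excludes this: the image of such a connecting map is a quotient of a grade-$\ge r$ module lying inside a module all of whose submodules have grade $\ge r+1$, which is not by itself contradictory; ruling it out needs the finer purity analysis (grades of the relevant submodules being exactly $r$, and an induction that survives passing to submodules) that constitutes Iyama's argument.

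So as written, the uniqueness and multiplicity-one statements are not proved, and with them the main content of the proposition. The pragmatic fix is to do what the paper does and quote \cite[Theorem 2.10]{I} for the composition-factor statement; your remaining reductions then go through. Note also that for the final claim that $I$ cannot appear in degrees greater than $r$ of the coresolution of $P(\phi(S))$, the paper argues more directly that $\idim P(\phi(S))=r$ via the dual of Theorem \ref{AGcondition}, which is simpler than your degree-by-degree grade estimate (though the latter is correct).
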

\begin{proof}
Consider $\Ext_A^r(S,A)$. To understand its $A^{op}$-module structure, we can think of it as a representation of the Gabriel quiver of $A^{op}$. For $e$ a primitive idempotent of $A^{op}$, we have that $e \Ext_A^r(S,A)=\Ext_A^r(S,eA)$. That is to say, the vector space associated to the vertex corresponding to $e$ is given by $\Ext_A^r(S,eA)$. Thus, the vertices over which $\Ext_A^r(S,A)$ 
has support are exactly the elements of $\mathcal{P}$.
By part (ii) of the proof of Theorem 2.10 in \cite{I}, all the simples appearing in a composition series of $\Ext_A^r(S,A)$ as an $A^{op}$-module other than its socle $(D \phi(S))$ are of grade greater than $r$. Dually, all but one of the tops of the modules from $\mathcal{P}$ is of cograde greater than $r$, while one is of cograde $r$, and it is this one which is $\phi(S)$.
The same fact from the proof of Theorem 2.10 in \cite{I} implies that the simple $D \phi(S)$ can only appear once in a composition series for $\Ext_A^r(S,A)$. If we write $P$ for the projective cover of $\phi(S)$, this implies that $\Ext_A^r(S,P)$ is one-dimensional, and the final statement of the proposition follows since by the dual statement of Theorem \ref{AGcondition} we know that $\idim(P)=r$ so $I$ doesn't appear in degrees higher than $r$.
\end{proof}

\begin{lemma}

Let $A$ be Auslander--Gorenstein. Let $I$ be an injective module of projective dimension $r$. Among the indecomposable projective modules
such that $I$ appears in their minimal coresolution, there is a unique one which is of injective dimension $r$, while all the others, if they exist, are of injective dimension strictly greater than $r$.
\end{lemma}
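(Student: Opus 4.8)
The plan is to reduce the statement to the $\Ext$-criterion for an indecomposable injective to occur in an injective coresolution, and then to combine the grade-equals-projective-dimension identity of Theorem~\ref{AGcondition} with the Auslander--Reiten bijection $\psi$ (see \cite{AR}). Since only one indecomposable injective is in play at a time, I would assume $I$ indecomposable and set $S = \soc I$, so that $I = I(S)$; by Theorem~\ref{AGcondition} its projective dimension satisfies $r = \pdim I = \grade S$. The key translation is Lemma~\ref{extformula}(2): for an indecomposable projective $P$, the injective $I$ occurs in degree $l$ of the minimal injective coresolution $0 \to P \to I^0 \to I^1 \to \cdots$ of $P$ if and only if $\Ext_A^l(S,P) \neq 0$.

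First I would determine in which degrees $I$ can occur. As $P$ is an indecomposable projective it is a direct summand of $A$, so $\Ext_A^l(S,P)$ is a direct summand of $\Ext_A^l(S,A)$; since $\grade S = r$, the latter vanishes for $0 \le l < r$, hence so does $\Ext_A^l(S,P)$. Therefore $I$ occurs only in degrees $\ge r$ of the minimal injective coresolution of any indecomposable projective, and in particular any indecomposable projective in whose coresolution $I$ occurs has injective dimension at least $r$. This settles the ``$\ge r$'' part of the claim.

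Next I would produce the distinguished projective and prove uniqueness. Take $P_0 = \psi(I)$, the indecomposable projective attached to $I$ by the Auslander--Reiten bijection; then $\idim P_0 = \pdim I = r$ and $\psi^{-1}(P_0) = \Omega^{-r}(P_0)$ equals $I$. Unwinding the cosyzygy identifies $\Omega^{-r}(P_0)$ with the top term $I^r$ of the minimal injective coresolution $0 \to P_0 \to I^0 \to \cdots \to I^r \to 0$; hence $I$ does occur in the coresolution of $P_0$, namely in degree $r$, and $\idim P_0 = r$. For uniqueness, let $P$ be an indecomposable projective with $\idim P = r$ in whose coresolution $I$ occurs. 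By the degree bound above $I$ cannot occur below degree $r$, and it cannot occur above degree $\idim P = r$, so it occurs in degree exactly $r$; thus $I$ is a direct summand of $I^r = \Omega^{-r}(P) = \psi^{-1}(P)$. Since $\psi^{-1}(P)$ is indecomposable injective and $I$ is indecomposable, $I = \psi^{-1}(P)$, so $P = \psi(I) = P_0$. Hence $P_0$ is the unique indecomposable projective of injective dimension $r$ in whose coresolution $I$ occurs, and every other indecomposable projective with this property has injective dimension $\ge r$ by the first step and $\ne r$ by uniqueness, hence $> r$.

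I do not anticipate a real obstacle; the argument is essentially bookkeeping once the right tools are in place. The two points deserving care are the identification $\Omega^{-r}(P) = I^r$ --- that the $r$-th cosyzygy of a projective of injective dimension $r$ is literally the top injective term of its minimal coresolution, and that this term is indecomposable injective, which is exactly the non-trivial input of \cite[Proposition~5.4]{AR} underlying the Auslander--Reiten bijection --- and the appeal to $r = \grade S$ from Theorem~\ref{AGcondition}, without which one could not control the lowest degree in which $I$ may appear.
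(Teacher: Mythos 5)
Your proof is correct, but it takes a genuinely different route from the paper's. The paper stays entirely inside the grade/cograde formalism: it invokes Proposition~\ref{gradeBijCharacterisation} to single out, among the projectives $\mathcal P$ in whose coresolution $I$ appears in degree $r$, the unique one whose top has cograde $r$; this projective then occurs in degree $r$ of the projective resolution of $D({}_AA)$, so its injective dimension is at most $r$ because $A^{op}$ is also Auslander--Gorenstein, hence exactly $r$; the remaining members of $\mathcal P$ are handled by surjectivity of the grade bijection, writing $\top(P')=\phi(S')$ with $\grade S'=\cograde\top(P')>r$ and reapplying Theorem~\ref{AGcondition}. You instead import \cite[Proposition~5.4]{AR} as a black box: $\psi(I)=\Omega^{r}(I)$ provides existence, and indecomposability of $\Omega^{-r}(P)=I^{r}$ for a projective $P$ of injective dimension $r$ forces uniqueness, with the only grade-theoretic input being the lower bound on the degrees where $I(S)$ can occur (via $\grade S=\pdim I$ from Theorem~\ref{AGcondition} and Lemma~\ref{extformula}(2)). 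Your argument is shorter and is not circular within the paper's logic, since \cite[Proposition~5.4]{AR} is an external result that the paper itself cites later in the proof of Theorem~\ref{gradeBijCharacterisation2}. What the paper's route buys, and yours does not directly deliver, is the identification of the distinguished projective as the projective cover of $\phi(\soc I)$ --- exactly the content of Corollary~\ref{gradeBijCorollary}, which drives Theorem~\ref{gradeBijCharacterisation2}; to recover that from your proof one would still need Proposition~\ref{gradeBijCharacterisation} together with the dual of Theorem~\ref{AGcondition} (giving $\cograde\top\psi(I)=\idim\psi(I)=r$). One cosmetic point: the lemma says ``injective module'', and you silently assume $I$ indecomposable; this matches the intended use (and the paper's own proof, which sets $S=\soc I$), but it is worth stating.
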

\begin{proof}
Let $S$ be the socle of $I$. By Theorem \ref{AGcondition}, we have $\grade(S)=\pdim(I)=r.$ This means that $I$ appears first in degree $r$ of the injective coresolution of $A.$ 

If for a projective $A$-module $P$, $I$ appears in degree $d>r$ in its minimal injective coresolution, then $\idim(P)\ge d>r.$ So it is enough to concentrate on the indecomposable projectives for which $I$ appears in their minimal coresolution in degree $r$. These are exactly the elements of the set $\mathcal{P}$ from Proposition \ref{gradeBijCharacterisation}. From this proposition, we know that there is a unique $P\in \mathcal{P}$ whose
top has cograde $r$, while the others are higher. This means that $P$ appears in
degree $r$ of the projective resolution of $D({}_AA)$, while the other elements of $\mathcal{P}$
appear in higher degrees. The injective dimension of $P$ is therefore at most $r,$ as $A^{op}$ is also
Auslander--Gorenstein if $A$ is. As $I$ appears in degree $r$ of the injective coresolution of $P,$ $\idim(P)=r.$

Now let $P'\in\mathcal{P}\backslash\{P\}.$ Then by the grade bijection, there exists an $S'$ such that $\phi(S')=\top(P')$ and $\grade(S')=\cograde(\top(P'))=r'>r.$ By the previous argument applied to $I(S')$ and using Theorem \ref{AGcondition} again, we have that $\idim(P')=\pdim(I(S'))=\grade(S')=r'>r.$
\end{proof}

\begin{corollary}\label{gradeBijCorollary} 
Let $A$ be Auslander--Gorenstein. Let $I$ be an indecomposable
injective of projective dimension $r$. Then the grade bijection sends the socle of $I$ to
the top of the unique projective $P$ of injective dimension $r$ such that $I$ appears in
the minimal injective coresolution of $P$.
\end{corollary}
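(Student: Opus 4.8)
The plan is to simply assemble Proposition \ref{gradeBijCharacterisation} and the lemma immediately preceding the corollary. First I would set $S:=\soc(I)$, so that $I=I(S)$ is the injective envelope of $S$ (here I use the standard fact that an indecomposable injective module is the injective envelope of its simple socle). Since $A$ is Auslander-Gorenstein, Theorem \ref{AGcondition} gives $\grade(S)=\pdim(I(S))=\pdim(I)=r$; in particular $S$ has grade exactly $r$ and $I$ first appears in degree $r$ of the minimal injective coresolution of $A$.

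Next I would apply Proposition \ref{gradeBijCharacterisation} to this $S$ and this $r$. Writing $\mathcal{P}$ for the set of indecomposable projectives in whose minimal injective coresolution $I$ appears in degree $r$, the proposition produces a unique $P\in\mathcal{P}$ whose top has cograde $r$, and identifies $\top(P)$ with $\phi(S)$.

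Then I would invoke the preceding lemma: among all indecomposable projectives having $I$ somewhere in their minimal injective coresolution, the unique one of injective dimension $r$ is exactly this same $P$. Indeed, the lemma's proof shows that a projective with $I$ occurring in degree $d>r$ has injective dimension $\ge d>r$, so every candidate lies in $\mathcal{P}$, and within $\mathcal{P}$ it is precisely the element with top of cograde $r$ that has injective dimension equal to $r$ (the others having strictly larger injective dimension). Combining the two statements gives $\phi(\soc(I))=\phi(S)=\top(P)$, which is exactly the assertion of the corollary.

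I do not anticipate a genuine obstacle here, since all of the real work has already been done in the two earlier results; the corollary is essentially a repackaging. The only points requiring care are bookkeeping: that the ``unique projective of injective dimension $r$ with $I$ in its coresolution'' of the lemma and the ``unique $P\in\mathcal{P}$ whose top has cograde $r$'' of the proposition denote literally the same module (established inside the lemma's proof), and that $I(\soc I)=I$, so that Theorem \ref{AGcondition} genuinely pins down $\grade(\soc I)=\pdim(I)=r$ and licenses the application of Proposition \ref{gradeBijCharacterisation}.
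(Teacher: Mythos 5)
Your argument is correct and is exactly the route the paper intends: the corollary is stated without proof precisely because it is the combination of Theorem \ref{AGcondition} (giving $\grade(\soc I)=\pdim I=r$), Proposition \ref{gradeBijCharacterisation} (identifying $\phi(\soc I)$ with the top of the unique $P\in\mathcal{P}$ of cograde $r$), and the preceding lemma (identifying that same $P$ as the unique projective of injective dimension $r$ with $I$ in its coresolution). Your bookkeeping of why these two ``unique'' projectives coincide matches the lemma's proof, so there is nothing to add.
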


The next theorem tells us that the grade permutation coincides with the Auslander--Reiten permutation for Auslander--Gorenstein algebras.
\begin{theorem}\label{gradeBijCharacterisation2}
Let $A$ be Auslander--Gorenstein. Let $I$ be an injective indecomposable of projective dimension $r$. Let $P$ be the projective indecomposable with $\top(P)=\phi(\soc(I))$. Then $P$ is of injective dimension $r$. The $r$-th (final) term in the projective resolution of $I$ is $P$, and the $r$-th (final) term in the injective resolution of $P$ is $I$.
\end{theorem}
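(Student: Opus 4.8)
The plan is to read off all three assertions from Corollary~\ref{gradeBijCorollary} together with the known properties of the Auslander--Reiten bijection $\psi$ recalled in Subsection~\ref{ARbijection}. Set $S := \soc(I)$, so that $I = I(S)$ and, by Theorem~\ref{AGcondition}, $\grade(S) = \pdim(I(S)) = r$. Let $P$ denote the indecomposable projective with $\top(P) = \phi(S)$, i.e. the projective cover of $\phi(S)$. By Corollary~\ref{gradeBijCorollary} and the lemma preceding it, $P$ is exactly the unique indecomposable projective module with $\idim(P) = r$ such that $I$ occurs in the minimal injective coresolution of $P$. In particular $\idim(P) = r$, which is the first assertion.

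Next I would pin down the final term $I^r(P)$ of the minimal injective coresolution $0 \to P \to I^0(P) \to \cdots \to I^r(P) \to 0$ of $P$. Since $\idim(P) = r$, this final term is $\Omega^{-r}(P)$, which by \cite[Proposition~5.4]{AR} equals $\psi^{-1}(P)$ and is therefore indecomposable injective. On the other hand, Proposition~\ref{gradeBijCharacterisation} gives that $\Ext_A^r(S,P)$ is one-dimensional, in particular non-zero, so by Lemma~\ref{extformula}(2) there is an injection $S \hookrightarrow I^r(P)$, and hence $I = I(S)$ is a direct summand of $I^r(P)$. An indecomposable injective module having $I$ as a summand must equal $I$, so $I^r(P) = I$; this is the third assertion, and it also says that $\psi^{-1}(P) = I$, equivalently $\psi(I) = P$.

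For the middle assertion, note that by the definition of $\psi$ and because $\pdim(I) = r$, the final term of the minimal projective resolution $0 \to P_r(I) \to \cdots \to P_0(I) \to I \to 0$ is $P_r(I) = \Omega^r(I) = \psi(I)$, which we have just shown to be $P$. Hence the $r$-th term in the projective resolution of $I$ is $P$, completing the proof. As a byproduct, since $\top(P) = \phi(S(i)) = S(\hat\phi(i))$ when $I = I(i)$, this yields $\hat\psi = \hat\phi$, as announced.

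The only step I expect to require genuinely non-formal input is the indecomposability of the final term $I^r(P)$: this is precisely where the non-trivial content of \cite{AR} — that $\Omega^{-\idim(P)}(P)$ is indecomposable for an indecomposable projective $P$ over an Auslander-Gorenstein algebra — is used, everything else being an assembly of facts already proved in this section. The remaining ingredients are routine: the identifications $I^r(P) = \Omega^{-r}(P)$ and $P_r(I) = \Omega^r(I)$ for modules of the appropriate finite (co)homological dimension are standard, and $\grade(\soc I) = \pdim(I)$, the characterisation of $P$, and $\Ext_A^r(S,P) \neq 0$ have all been established earlier.
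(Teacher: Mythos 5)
Your proposal is correct and follows essentially the same route as the paper: identify $\idim(P)=r$ via Corollary~\ref{gradeBijCorollary}, use Proposition~\ref{gradeBijCharacterisation} to see that $I$ occurs in degree $r$ of the coresolution of $P$, and then invoke the indecomposability of $\Omega^{-r}(P)$ and the mutually inverse bijections $\Omega^{\pm r}$ from \cite[Proposition 5.4]{AR} to identify both final terms. The only cosmetic difference is that you pass through $\Ext_A^r(S,P)\neq 0$ and Lemma~\ref{extformula}(2) where the paper cites the occurrence of $I$ in degree $r$ directly, which is the same content.
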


\begin{proof} Since $I$ has projective dimension $r$ and $\pdim(I(S))=\grade(S)$ for every simple module $S$ in an Auslander--Gorenstein algebra by Theorem \ref{AGcondition}, Proposition \ref{gradeBijCharacterisation} tells us that $I$ appears in the $r$-th term
of the injective coresolution of $P$ and by Corollary \ref{gradeBijCorollary}, $\idim(P)=r.$ 
By \cite[Prop. 5.4.(b)]{AR}, we know that since $A$ is $r$-Gorenstein, the functor $\Omega^{-r}$ gives a bijection between (the isomorphism classes of) the indecomposable projective $A$-modules of injective dimension $r$ and (the isomorphism classes of) the indecomposable injective $A$-modules of projective dimension $r.$ Moreover, $\Omega^r$ gives the inverse bijection between these two sets. Thus, $\Omega^{-r}(P)$ is indecomposable, and so $\Omega^{-r}(P)\cong I.$ The inverse bijection is given by $\Omega^r$, so $\Omega^{r}(I)\cong P.$
\end{proof}

\begin{theorem} \label{mainresultcoxeter}
Let $A$ be an Auslander regular algebra with an admissible ordering of the simple modules and with Coxeter matrix $C$. Then the Bruhat decomposition $U_1PU_2$ of $C$ is given by $C=PU_2$, where $P$ corresponds to the grade bijection of $C$ and $U_2$ is upper triangular with diagonal entries $1$ or $-1$.
\end{theorem}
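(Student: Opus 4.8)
The plan is to identify, row by row, the position and value of the first nonzero entry of the Coxeter matrix $C$, and to match it against the grade permutation $\hat\phi$ (which, by Theorem~\ref{gradeBijCharacterisation2}, also equals the Auslander--Reiten permutation). The starting observation is purely linear-algebraic: from $C = -\omega_A^{T}\omega_A^{-1}$, together with the facts that the $i$-th columns of $\omega_A$ and of $\omega_A^{T}$ are $\underline{\dim}P(i)$ and $\underline{\dim}I(i)$ respectively, one obtains for every vertex $a$ the identity $\underline{\dim}P(a) = -\sum_{b} C_{ab}\,\underline{\dim}I(b)$. Thus $-C_{ab}$ is the coefficient of $\underline{\dim}I(b)$ when $\underline{\dim}P(a)$ is expanded in the $\mathbb{Z}$-basis $\underline{\dim}I(1),\dots,\underline{\dim}I(n)$ of $\mathbb{Z}^{n}$. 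Consequently, to prove the theorem it suffices to show that for each $a$ the smallest index $b$ occurring with nonzero coefficient in this expansion is $b = \hat\phi^{-1}(a)$, and that this coefficient equals $\pm1$: indeed, if $P$ is the permutation matrix of $\hat\phi$, this says precisely that the $j$-th row of $P^{-1}C$ has its first nonzero entry, equal to $\pm1$, in column $j$, so $U_{2} := P^{-1}C$ is upper triangular with diagonal entries $\pm1$, $C = P U_{2}$, and by uniqueness of the permutation in a Bruhat decomposition $P$ is the Coxeter permutation.

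Next I would fix $a$ and let $i^{*}$ be the unique vertex with $\psi(I(i^{*})) = P(a)$ under the Auslander--Reiten bijection; then $\hat\phi(i^{*}) = a$, and by Theorem~\ref{gradeBijCharacterisation2} and Theorem~\ref{AGcondition} the number $r := \pdim I(i^{*}) = \grade S(i^{*})$ is the length of the minimal projective resolution $0 \to P(a) \to P_{r-1} \to \cdots \to P_{0} \to I(i^{*}) \to 0$ of $I(i^{*})$, whose last term is $P(a)$. Passing to classes in the Grothendieck group gives $\underline{\dim}P(a) = (-1)^{r}\,\underline{\dim}I(i^{*}) + \sum_{k=0}^{r-1}(-1)^{r+1+k}\,\underline{\dim}P_{k}$, so I am reduced to controlling which indecomposable injectives can appear in $\underline{\dim}P_{k}$ for $0 \le k \le r-1$.

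The main point, and the step I expect to be the real obstacle, is the claim that for $0 \le k \le r-1$ the class $\underline{\dim}P_{k}$ is a $\mathbb{Z}$-linear combination of the $\underline{\dim}I(b)$ with $\grade S(b) < r$. To establish it I would argue as follows. First, $\idim P_{k} \le k$: since $A$ is Auslander--Gorenstein so is $A^{op}$, hence $A^{op}$ is $n$-Gorenstein for every $n$, and dualising the minimal injective coresolution of $A$ as a left module shows that the minimal projective resolution of the injective cogenerator $D(A)$ of $\mod A$ has its $s$-th term of injective dimension at most $s$; as $I(i^{*})$ is a direct summand of $D(A)$, its $k$-th syzygy term $P_{k}$ is a direct summand of that $k$-th term, whence $\idim P_{k} \le k$. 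In particular the minimal injective coresolution of $P_{k}$ is finite of length at most $k$, so $\underline{\dim}P_{k}$ is a $\mathbb{Z}$-combination of the $\underline{\dim}I(b)$ for those $b$ with $I(b)$ a summand of some term of this coresolution. If $I(b)$ occurs there, in degree $s \le k$, then $\Ext_A^{s}(S(b),P_{k}) \ne 0$ by Lemma~\ref{extformula}(2); and since $P_{k}$ is projective, hence a direct summand of a free module, this forces $\Ext_A^{s}(S(b),A) \ne 0$, so $\grade S(b) \le s \le k \le r-1 < r$. This proves the claim.

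It remains to combine this with the admissible ordering: there the simple modules are listed with non-increasing grade, so $\grade S(b) < r = \grade S(i^{*})$ forces $b > i^{*}$. Hence each $\underline{\dim}P_{k}$ with $0 \le k \le r-1$ lies in the span of $\{\underline{\dim}I(b) : b > i^{*}\}$, and therefore $\underline{\dim}P(a) = (-1)^{r}\,\underline{\dim}I(i^{*})$ plus an element of that span. So, in the injective basis, the coefficient of $\underline{\dim}I(b)$ in $\underline{\dim}P(a)$ vanishes for $b < i^{*}$ and equals $(-1)^{r} = \pm1$ for $b = i^{*}$; equivalently $C_{ab} = 0$ for $b < i^{*} = \hat\phi^{-1}(a)$ and $C_{a,i^{*}} = \pm1$. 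By the reduction of the first paragraph this yields the Bruhat decomposition $C = P U_{2}$ with $U_{1}$ the identity matrix, $U_{2}$ upper triangular with diagonal entries in $\{1,-1\}$, and $P$ the permutation matrix of the grade bijection, as desired. The entire argument apart from the third paragraph is bookkeeping with Euler characteristics and the admissible order; the substantive input is the bound $\idim P_{k} \le k$ coming from the (left–right symmetric) Auslander--Gorenstein hypothesis, which pins down the grades of the simples whose injective hulls can appear in the syzygies of $I(i^{*})$.
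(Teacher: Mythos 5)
Your proposal is correct, and it uses the same main inputs as the paper (Theorems \ref{AGcondition} and \ref{gradeBijCharacterisation2}, the admissible order, and the identification of $-C$ with the transition matrix expressing projective dimension vectors in the basis of injective dimension vectors), but the technical core is organised dually, so a brief comparison is worthwhile. The paper reads off row $a$ of $-C$ directly from the minimal injective coresolution of $P(a)$: its terms in degree $s$ are summands of $I^{s}$ and hence have projective dimension at most $s$ by the Auslander--Gorenstein condition, the final term is the single indecomposable injective $I$ with $\phi(\soc I)=\top P(a)$ by Theorem \ref{gradeBijCharacterisation2}, and the resulting condition $\pdim I(j)<\idim P(a)$ is turned into a grade inequality via $\cograde \phi(S)=\grade S$. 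You instead start from the minimal projective resolution of $I(i^{*})$, where $\psi(I(i^{*}))=P(a)$, and re-expand each intermediate projective $P_{k}$ over the injectives; the new ingredient is your claim that $[P_{k}]$, for $k\le r-1$, only involves injectives whose socles have grade $<r$, which you prove correctly from $\idim P_{k}\le k$ (obtained from the opposite-side Gorenstein condition applied to the projective resolution of $D(A)$; this left--right symmetry is also invoked in the paper) together with the elementary observation that $\Ext_A^{s}(S(b),P_{k})\ne 0$ forces $\Ext_A^{s}(S(b),A)\ne 0$, hence $\grade S(b)\le s$. What each approach buys: the paper's version is shorter, since the entries of $-C$ are literally the signed multiplicities in the injective coresolutions of the projectives, whereas your version replaces the cograde bookkeeping by a direct grade bound against $A$, at the cost of a two-layer resolution argument. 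Your initial linear-algebra reduction (first nonzero entry of row $a$ in column $\hat\phi^{-1}(a)$ with value $\pm 1$ yields $C=PU_{2}$, and uniqueness of the Bruhat permutation identifies the Coxeter permutation) matches the paper's and is handled correctly.
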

\begin{proof}
Let
us number the projective, simple, and injective modules of A, so that if the grade
of $S(i)$ is less than the grade of $S(j)$ then $i > j$. In other words, we fix an admissible ordering.  We use the notation $P(i)$ and $I(i)$ for the indecomposable projective and indecomposable injective $A$-modules, respectively.

Let $M$ be the matrix whose $ij$ entry counts the occurrences of $I(j)$ in the injective coresolution of $P(i)$ with signs: i.e., each occurrence of $I(j)$ in even degree contributes positively, and each occurrence of $I(j)$ in odd degree contributes negatively. Write $\hat I$ for the matrix whose rows are the dimension vectors of the injective modules, and $\hat P$ for the matrix whose rows are the dimension vectors of the projective modules. Because the dimension vector of $P(i)$ equals the alternating sum of the dimension vectors of the injective modules appearing in the coresolution of $P(i)$, we have that $e_i \cdot M \hat I=\underline{\dim} P(i)$, so $M\hat I=\hat P$. Now, using the fact that $\hat P^T=\hat I$ and vice versa, we find that $M \cdot \underline{\dim}P(i)=\underline{\dim}I(i)$. We therefore have that $M=-C$, since by definition the Coxeter matrix $C$ is the matrix that satisfies $C\cdot \underline{\dim} P(i)=-\underline{\dim}I(i)$.

Since $A$ is Auslander--Gorenstein and by Theorem  \ref{gradeBijCharacterisation2}, the entry $M_{ij}$ is nonzero only if either the projective dimension of $I(j)$ is less
than the injective dimension of $P(i)$, or $\phi(S(j) ) = S(i).$ Consider now the matrix $\phi^{-1} M$.
Its $ij$ entry is nonzero if and only if $M_{\phi(i),j} \neq 0.$ 
This occurs only if $\pdim I(j) < \idim P(\phi(i))$ or $i=j.$ The first inequality is equivalent to $\grade S(j) < \cograde S(\phi(i))=\grade S(i)$. Here we used Theorem \ref{AGcondition} and the property of $\phi$ discussed in \ref{gradeBijection}. By our ordering of the simples, this tells us that $\phi^{-1}M$ is upper triangular. The diagonal entries are either 1 or $-1$ by Theorem \ref{gradeBijCharacterisation2}.
\end{proof}

The next corollary provides a purely linear algebraic way to show that a given algebra is not Auslander regular. We emphasise that for the next corollary, the ordering of the simples does not matter.
\begin{corollary}\label{permanentOfCoxeter}
Let $A$ be an Auslander regular algebra with Coxeter matrix $C$. 
Then the permanent of $C$ is $1$ or $-1$.

\end{corollary}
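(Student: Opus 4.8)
The plan is to deduce the statement from Theorem \ref{mainresultcoxeter} after first observing that the permanent is insensitive to the ordering of the simple modules. Reordering the simples replaces the Cartan matrix $\omega_A$ by $Q\omega_A Q^T$ for a permutation matrix $Q$, and hence replaces $C=-\omega_A^T\omega_A^{-1}$ by $QCQ^{-1}=QCQ^T$; that is, it applies the same permutation to the rows and to the columns of $C$. Since $\operatorname{perm}(M)=\sum_{\sigma\in S_n}\prod_{i=1}^n m_{i,\sigma(i)}$ is unchanged under a permutation of the rows (reindex the product) and under a permutation of the columns (reindex the sum over $\sigma$), the permanent of the Coxeter matrix does not depend on the chosen ordering. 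So I may freely assume the ordering is admissible, which is always possible.

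Next I would invoke Theorem \ref{mainresultcoxeter}: for an admissible ordering one has $C=PU_2$, where $P$ is the permutation matrix of the grade bijection and $U_2$ is upper triangular with every diagonal entry equal to $1$ or $-1$. The matrix $PU_2$ is obtained from $U_2$ simply by permuting its rows, so by the row-invariance of the permanent just noted, $\operatorname{perm}(C)=\operatorname{perm}(U_2)$.

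Finally I would evaluate $\operatorname{perm}(U_2)$. Because $U_2$ is upper triangular, the only $\sigma\in S_n$ with $\prod_{i=1}^n (U_2)_{i,\sigma(i)}\neq 0$ is $\sigma=\id$: any other $\sigma$ has an index $i$ with $\sigma(i)<i$, which forces $(U_2)_{i,\sigma(i)}=0$. Hence $\operatorname{perm}(U_2)=\prod_{i=1}^n (U_2)_{ii}$, a product of entries each equal to $\pm 1$, so $\operatorname{perm}(U_2)\in\{1,-1\}$. Combining with the previous step yields $\operatorname{perm}(C)\in\{1,-1\}$.

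I do not expect any real obstacle here, since everything reduces to Theorem \ref{mainresultcoxeter} and elementary properties of the permanent. The one point worth stating carefully is the reduction in the first paragraph: the corollary is asserted without reference to an ordering, so one has to check that permuting the simples only conjugates $C$ by a permutation matrix and that this operation leaves the permanent unchanged, which is what legitimises passing to the admissible ordering required by Theorem \ref{mainresultcoxeter}.
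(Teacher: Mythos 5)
Your proposal is correct and follows essentially the same route as the paper: reduce to an admissible ordering using invariance of the permanent under row/column permutations, apply Theorem \ref{mainresultcoxeter} to write $C=PU_2$, and conclude that the permanent equals the product of the $\pm 1$ diagonal entries of $U_2$. Your only addition is to spell out explicitly that reordering the simples conjugates $C$ by a permutation matrix, a detail the paper leaves implicit.
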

\begin{proof}
The permanent of $C$ does not depend on the ordering of the simples, as it is invariant under multiplication with permutation matrices. So we can assume that the simples are in an admissible order, and then by Theorem \ref{mainresultcoxeter}, $C=\Pi U$ with $\Pi$ a permutation matrix and $U$ an upper triangular matrix with diagonal entries 1 or $-1$.
 Using again the fact that the permanent is invariant under multiplication with permutation matrices, we obtain that the permanent of $C$ is equal to the permanent of $U$, which is just the product of the diagonal entries.

\end{proof}

\begin{example}
The following poset is 2-Gorenstein, but it is not Auslander regular since the permanent of the Coxeter matrix is equal to $-1501$.
\begin{tikzpicture}[>=latex,line join=bevel,]
\begin{tikzpicture}[>=latex,line join=bevel,]
\node (node_0) at (51.0bp,6.5bp) [draw,draw=none] {$0$};
  \node (node_1) at (66.0bp,55.5bp) [draw,draw=none] {$1$};
  \node (node_2) at (6.0bp,55.5bp) [draw,draw=none] {$2$};
  \node (node_3) at (36.0bp,55.5bp) [draw,draw=none] {$3$};
  \node (node_5) at (96.0bp,55.5bp) [draw,draw=none] {$5$};
  \node (node_4) at (6.0bp,104.5bp) [draw,draw=none] {$4$};
  \node (node_7) at (96.0bp,104.5bp) [draw,draw=none] {$7$};
  \node (node_8) at (66.0bp,104.5bp) [draw,draw=none] {$8$};
  \node (node_6) at (36.0bp,104.5bp) [draw,draw=none] {$6$};
  \node (node_9) at (51.0bp,153.5bp) [draw,draw=none] {$9$};
  \draw [black,->] (node_0) ..controls (54.908bp,19.746bp) and (58.351bp,30.534bp)  .. (node_1);
  \draw [black,->] (node_0) ..controls (38.73bp,20.316bp) and (27.098bp,32.464bp)  .. (node_2);
  \draw [black,->] (node_0) ..controls (47.092bp,19.746bp) and (43.649bp,30.534bp)  .. (node_3);
  \draw [black,->] (node_0) ..controls (63.27bp,20.316bp) and (74.902bp,32.464bp)  .. (node_5);
  \draw [black,->] (node_1) ..controls (50.238bp,68.847bp) and (32.656bp,82.619bp)  .. (node_4);
  \draw [black,->] (node_1) ..controls (73.953bp,68.96bp) and (81.16bp,80.25bp)  .. (node_7);
  \draw [black,->] (node_1) ..controls (66.0bp,68.603bp) and (66.0bp,79.062bp)  .. (node_8);
  \draw [black,->] (node_2) ..controls (6.0bp,68.603bp) and (6.0bp,79.062bp)  .. (node_4);
  \draw [black,->] (node_2) ..controls (13.953bp,68.96bp) and (21.16bp,80.25bp)  .. (node_6);
  \draw [black,->] (node_2) ..controls (21.762bp,68.847bp) and (39.344bp,82.619bp)  .. (node_8);
  \draw [black,->] (node_3) ..controls (28.047bp,68.96bp) and (20.84bp,80.25bp)  .. (node_4);
  \draw [black,->] (node_3) ..controls (36.0bp,68.603bp) and (36.0bp,79.062bp)  .. (node_6);
  \draw [black,->] (node_3) ..controls (51.762bp,68.847bp) and (69.344bp,82.619bp)  .. (node_7);
  \draw [black,->] (node_4) ..controls (18.27bp,118.32bp) and (29.902bp,130.46bp)  .. (node_9);
  \draw [black,->] (node_5) ..controls (80.238bp,68.847bp) and (62.656bp,82.619bp)  .. (node_6);
  \draw [black,->] (node_5) ..controls (96.0bp,68.603bp) and (96.0bp,79.062bp)  .. (node_7);
  \draw [black,->] (node_5) ..controls (88.047bp,68.96bp) and (80.84bp,80.25bp)  .. (node_8);
  \draw [black,->] (node_6) ..controls (39.908bp,117.75bp) and (43.351bp,128.53bp)  .. (node_9);
  \draw [black,->] (node_7) ..controls (83.73bp,118.32bp) and (72.098bp,130.46bp)  .. (node_9);
  \draw [black,->] (node_8) ..controls (62.092bp,117.75bp) and (58.649bp,128.53bp)  .. (node_9);
\end{tikzpicture}
\end{tikzpicture}

\end{example}

We give an application of our main result, Theorem \ref{mainresultcoxeter}, and calculate the grade bijection for blocks of category $\mathcal{O}$.
Recall that a \emph{simple-preserving duality} is a contravariant anti-equivalence $H$ of $\mod A$ which preserves the isomorphism classes of simple $A$-modules.
For example all blocks of category $\mathcal{O}$ have a simple-preserving duality, see for example \cite{BGS,Hu}.
\begin{proposition}
Let $A$ be an Auslander regular algebra with a simple-preserving duality. Then the grade bijection of $A$ is the identity.
\end{proposition}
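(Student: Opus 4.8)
The plan is to deduce the statement from Theorem \ref{mainresultcoxeter}: once an admissible ordering of the simple modules is fixed, that theorem identifies the grade permutation with the permutation matrix appearing in a Bruhat decomposition of the Coxeter matrix $C_A$. Hence it suffices to show that a simple-preserving duality forces $C_A=-I_n$, whose Bruhat permutation is visibly the identity.

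First I would show that a simple-preserving duality makes the Cartan matrix symmetric. Let $H$ be the duality, so $H$ is an exact contravariant self-equivalence of $\mod A$ with $H(S(i))\cong S(i)$ for all $i$. Since $H$ reverses tops and socles, $\soc H(P(i))\cong H(\top P(i))=H(S(i))\cong S(i)$, so $H(P(i))\cong I(i)$; and since $H$ is exact and fixes every simple, it preserves composition multiplicities, whence $[I(i):L(j)]=[P(i):L(j)]$ for all $i,j$. Combining this with the general reciprocity $[I(j):L(i)]=[P(i):L(j)]$ (which is just $\dim_K\Hom_A(P(i),I(j))$ computed in two ways, and is precisely the identity $\hat I=\hat P^{T}$ used in the proof of Theorem \ref{mainresultcoxeter}) gives $[P(j):L(i)]=[P(i):L(j)]$; that is, $\omega_A=\omega_A^{T}$. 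As $A$ is Auslander regular it has finite global dimension, so $\omega_A$ is invertible and
\[
C_A = -\omega_A^{T}\omega_A^{-1} = -\omega_A\omega_A^{-1} = -I_n.
\]

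Second, I would fix an admissible ordering of the simple $A$-modules (one exists, by the discussion in the introduction) and invoke Theorem \ref{mainresultcoxeter}: the grade permutation equals the Coxeter permutation, i.e.\ the unique permutation matrix occurring in a Bruhat decomposition of $C_A$. Since $-I_n=I_n\cdot I_n\cdot(-I_n)$ is such a decomposition with permutation matrix $I_n$, uniqueness of the permutation part gives that the Coxeter permutation, and therefore the grade permutation, is the identity; equivalently $\phi(S)\cong S$ for every simple $A$-module $S$, which is the assertion.

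I do not anticipate a genuine obstacle; the argument is short once Theorem \ref{mainresultcoxeter} is available. The only points needing a little care are the identification $H(P(i))\cong I(i)$ together with the fact that $H$ preserves dimension vectors, and the triviality that a Bruhat decomposition of $-I_n$ has permutation part $I_n$. (As a sanity check, $C_A=-I_n$ is consistent with Corollary \ref{permanentOfCoxeter}: the permanent equals $(-1)^n=\pm 1$.) A purely homological alternative — that $H$ interchanges grade and cograde of each simple and conjugates the Auslander--Reiten bijection to its inverse — only shows the grade permutation is an involution, not the identity, so I would stick with the Coxeter-matrix argument, which pins it down completely.
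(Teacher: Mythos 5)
Your proposal is correct and follows essentially the same route as the paper: use the simple-preserving duality to get $H(P(i))\cong I(i)$, deduce that the Cartan matrix is symmetric (the paper does this via $\Hom$-space isomorphisms, you via composition multiplicities and reciprocity, which is the same computation in different clothing), conclude $C_A=-I_n$, and then read off the identity permutation from the Bruhat decomposition via Theorem \ref{mainresultcoxeter}. No gaps; your explicit remark about fixing an admissible ordering is a fine (and harmless) extra precaution.
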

\begin{proof}
We first show that the Cartan matrix of $A$ is a symmetric matrix.
Let $H$ denote the simple-preserving duality. 

Applying $H$ to the projective cover $P(i) \rightarrow S(i) \rightarrow 0$ and using $H(S(i))=S(i)$ gives
$0 \rightarrow S(i) \rightarrow H(P(i))$.
But $H(P(i))$ is the injective envelope of $S(i)$ and thus $H(P(i))=I(i)$.
Now as $K$-vector spaces we have:
$$e_j A e_i= \Hom_A(e_i A, e_j A)= \Hom_A(P(i),P(j)) \cong \Hom_A(H(P(j)),H(P(i))) \cong \Hom_A(I(j),I(i))= $$
$$\Hom_A(D(Ae_j),D(Ae_i)) \cong \Hom_A(Ae_i,Ae_j) \cong e_i A e_j.$$
Thus, the Cartan matrix of $A$ is symmetric.
By definition of the Coxeter matrix, we get $C_A=- \id$, which implies that the permutation matrix $P$ is the identity matrix in the Bruhat decomposition of $C_A$. Thus, by Theorem \ref{mainresultcoxeter}, the grade bijection is the identity.
\end{proof}

\section{Characterisation of distributivity for lattices using the Coxeter matrix}

The main result of this section shows that a finite lattice $R$ is distributive if and only if its Coxeter matrix (whose rows and columns are ordered with respect to a linear extension of $R$) has a Bruhat decomposition $U_1PU_2$ where $U_1$ is the identity matrix.

\begin{lemma} \label{reduction} Let $R$ be a lattice whose Bruhat decomposition $U_1PU_2$ has $U_1=\id$. Let $I$ be an upper interval of $R$. Then the same property holds for $I$.\end{lemma}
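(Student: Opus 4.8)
The plan is to reduce the assertion to a purely linear-algebraic statement about principal submatrices of the Coxeter matrix. Write $C$ for the Coxeter matrix of $R$ with respect to the fixed linear extension. First I would record the meaning of the hypothesis: $C$ has a Bruhat decomposition $U_1PU_2$ with $U_1=\mathrm{id}$ is equivalent to $C=PU$ for a permutation matrix $P$ and an upper triangular (hence invertible) matrix $U$; equivalently, the map sending each row of $C$ to the position of its first non-zero entry (read in the order of the linear extension) is a bijection; equivalently, for every $k$ the first $k$ columns of $C$ have non-zero entries in at most $k$ rows, and therefore — since $C$ is invertible — in exactly $k$ rows.

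Next I would identify $C_I$ with a submatrix of $C$. Write $I=[x,\hat 1]$ and $J=R\setminus I$; since $I$ is an order filter, $J$ is an order ideal and there is no path in $KR$ from a vertex of $I$ to a vertex of $J$ (such a path would witness $x\le i\le j$ with $j\notin I$), so $e_I(KR)(1-e_I)=0$ for $e_I=\sum_{z\in I}e_z$, exhibiting $KR$ as a triangular matrix algebra $\bigl(\begin{smallmatrix}KJ&0\\ M&KI\end{smallmatrix}\bigr)$ with $KI=e_I(KR)e_I$ in the corner. For $i\in I$ the dimension vector $\underline{\dim}P_{KR}(i)=\mathbb{1}_{\{z\ge i\}}$ is supported on $I$ and equals $\underline{\dim}P_{KI}(i)$; applying the identity $C\cdot\underline{\dim}P_{KR}(i)=-\underline{\dim}I_{KR}(i)$ and deleting the coordinates indexed by $J$, together with $\underline{\dim}I_{KR}(i)|_I=\mathbb{1}_{[x,i]}=\underline{\dim}I_{KI}(i)$, shows that the $I\times I$ submatrix of $C$ agrees with $C_I$ on the vectors $\underline{\dim}P_{KI}(i)$, $i\in I$. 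As these form a $\mathbb{Z}$-basis of $\mathbb{Z}^I$ (the columns of the Cartan matrix of $KI$), the $I\times I$ submatrix of $C$ equals $C_I$, where the order on $I$ is the restriction of the linear extension of $R$ (itself a linear extension of $I$, since $I$ is an order filter).

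Finally I would transfer the Bruhat condition from $C$ to its principal submatrix $C_I$. This is the heart of the proof: passing to $C_I$ deletes both the rows and the columns indexed by $J$, and the property "first non-zero entries lie in distinct columns" is not preserved by arbitrary such deletions, so the argument must genuinely use that $J$ is an order ideal (equivalently the triangular structure above) and the invertibility of $C$. Concretely, for each prefix $K$ of the order on $I$ one has to bound by $|K|$ the number of rows of $C$ that are indexed by $I$ and have a non-zero entry in some column of $K$; comparing with the corresponding prefix $K'=K\sqcup L$ (with $L\subseteq J$) of the order on $R$, the reformulated hypothesis bounds the total number of rows meeting the $K'$-columns by $|K|+|L|$, and one must show that restricting to the columns of $K$ and the rows indexed by $I$ discards at least $|L|$ of them — the columns of $C$ indexed by $J$ being linearly independent, hence of full rank, is the kind of input one would invoke. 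Organising this count correctly, given that an $I$-indexed row may be non-zero in $J$-columns and conversely, is where I expect the main difficulty to lie; once it is carried out, the equivalent form of the Bruhat condition holds for $C_I$, which is exactly the assertion.
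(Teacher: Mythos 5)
Your reduction of the problem is sound and matches what underlies the paper's argument: using $C\cdot\underline{\dim}P(i)=-\underline{\dim}I(i)$ and the fact that for $i\in I=[z,\hat 1]$ the projective dimension vectors are supported on $I$, one does get that the principal $I\times I$ submatrix of $C$ is the Coxeter matrix of $I$ with respect to the inherited linear extension, and your reformulation of $U_1=\mathrm{id}$ via leading entries (using invertibility of $C$) is correct. However, the decisive step --- transferring the leading-entry property from $C$ to the submatrix after deleting the rows and columns indexed by $J=R\setminus I$ --- is exactly what you leave unproven: you state the prefix-counting inequality that would be needed and acknowledge that you do not know how to organise the count when an $I$-indexed row is nonzero in $J$-columns and a $J$-indexed row is nonzero in $I$-columns. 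The rank argument you gesture at (the $J$-columns have full rank, hence meet at least $|J|$ rows) does not close this, precisely because those rows may coincide with $I$-indexed rows that also meet $K$-columns; and nothing in your sketch uses the lattice structure, whereas the statement is not a purely block-triangular linear-algebra fact.

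The missing idea in the paper's proof is a single lattice-theoretic observation: for \emph{every} $x\in R$, the row $C_{x\cdot}$ restricted to the columns of $I$ equals the row $C_{(x\vee z)\cdot}$ restricted to those columns. This is checked against the basis $\{\underline{\dim}P(y)\}_{y\in I}$ of $\mathbb{Z}^I$, since $C_{x\cdot}\cdot\underline{\dim}P(y)=-1$ if $y\geq x$ and $0$ otherwise, and for $y\in I$ one has $y\geq x\iff y\geq x\vee z$. Consequently there are at most $|I|$ possible restrictions of rows of $C$ to the $I$-columns, namely those coming from rows indexed by elements of $I$. Now the hypothesis $U_1=\mathrm{id}$ provides, for each column of $I$, a row of $C$ whose leading entry sits in that column; its restriction to the $I$-columns still has its leading entry there, these $|I|$ restrictions are pairwise distinct, and each of them is the restriction of some $I$-indexed row. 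Hence the restrictions of the $I$-indexed rows are exactly these vectors, with distinct leading columns filling out all of $I$, which says precisely that $C_I=C[I,I]$ satisfies $U_1=\mathrm{id}$. Without this use of the join $x\vee z$ (or an equivalent substitute), your counting plan does not yield the lemma, so as it stands the proposal has a genuine gap at its central point.
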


\begin{proof} Let $I=[z,\hat 1]$. 
Let $C$ be the Coxeter matrix of $R$ with respect to some fixed order.
Let $x$ be an element of $R$. We claim the rows $C_{x\cdot}$ and $C_{(x\vee z)\cdot}$ are equal when restricted to the positions in $I$. To show that this is true, it suffices to check that $C_{x\cdot}\cdot [P(y)]=C_{(x\vee z)\cdot} \cdot [P(y)]$ for all $y\in I$. And this is true since $C_{x\cdot}\cdot [P(y)]$ is $-1$ if $y\geq x$ and 0 otherwise, and, for $y\in I$, this is the same as $-1$ if $y\geq x\vee z$ and 0 otherwise. 
This shows that every row $C_{x\cdot}$, restricted to the columns of $I$, agrees with the restriction of a row indexed by an element of $I$, namely $C_{(x\vee z)\cdot}$.
Hence there are at most
\(|I|\) possible restrictions of rows of \(C\) to the columns in \(I\). Since
\(C\) has a Bruhat decomposition with \(U_1=\id\), for each column \(i\in I\)
there is a row of \(C\) whose first non-zero entry is in column \(i\). Restricting
these rows to the columns in \(I\), we obtain \(|I|\) distinct restricted rows,
whose first non-zero entries occur in the different columns of \(I\). Thus the
possible restrictions to columns in \(I\) form an upper-triangular
\(|I|\times |I|\) matrix, where the order of the columns is inherited from
\(R\). Suitably ordering the rows gives the Coxeter matrix of \(I\) itself.
Thus, \(I\) must also have a Bruhat decomposition with \(U_1=\id\).
\end{proof}

\begin{corollary}\label{red-cor} To show that if $R$ is not distributive then its Bruhat decomposition cannot have $U_1=\id$, it is sufficient to show the statement for a non-distributive lattice all of whose proper upper intervals are distributive. \end{corollary}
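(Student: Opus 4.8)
The plan is to argue by strong induction on $|R|$, the number of elements of the lattice. Write $C_R$ for the Coxeter matrix of $R$ with rows and columns ordered by a linear extension. Assume the implication ``$R$ not distributive $\Rightarrow$ $C_R$ has no Bruhat decomposition with $U_1=\id$'' has already been established for every finite lattice with strictly fewer elements than $R$, and suppose $R$ itself is non-distributive; the goal is to deduce that $C_R$ has no Bruhat decomposition with $U_1=\id$.

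I would then split into two cases according to whether all proper upper intervals of $R$ are distributive. If they are, then $R$ is precisely a lattice of the kind covered by the statement we are assuming to be proved, so the conclusion is immediate. Otherwise there is a $z\neq\hat 0$ such that the upper interval $I=[z,\hat 1]$ is non-distributive. Since $I$ is again a finite lattice — with meet, join and order inherited from $R$, bottom $z$ and top $\hat 1$ — and since $\hat 0\notin I$, we have $|I|<|R|$, so the inductive hypothesis applies to $I$ and tells us that $C_I$ has no Bruhat decomposition with $U_1=\id$. Finally I would invoke Lemma \ref{reduction}, which asserts that if $C_R$ had a Bruhat decomposition with $U_1=\id$ then so would $C_I$; taking the contrapositive shows $C_R$ cannot have such a decomposition, which completes the induction.

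I do not expect a genuine obstacle here: all the real content sits in Lemma \ref{reduction}, and what is left is the organisation of the induction. The one point that needs a moment's care is to confirm that a proper upper interval of $R$ is a finite lattice strictly smaller than $R$, so that the induction is well-founded — this is clear because $\hat 0$ never lies in a proper upper interval — and to note that restricting the chosen linear extension of $R$ to $I$ gives a linear extension of $I$, so that the object ``$C_I$'' appearing in the inductive hypothesis is the same one produced by Lemma \ref{reduction}.
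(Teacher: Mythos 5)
Your proof is correct and takes essentially the same route as the paper: everything reduces to the special case via Lemma \ref{reduction}. The paper merely packages your strong induction on $|R|$ differently, choosing $I$ minimal among the non-distributive upper intervals of $R$ (using that an upper interval of an upper interval is again an upper interval of $R$), applying the assumed special case to $I$ and then Lemma \ref{reduction} once — the same well-founded descent you carry out inductively.
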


\begin{proof} If $R$ is not distributive, we can choose $I$ minimal among the upper intervals of $R$ which is not distributive. Viewed as a lattice, $I$ has all its proper upper intervals distributive. So, if we can show that, for such a lattice, the Bruhat decomposition $U_1PU_2$ does not have $U_1=\id$, then by Lemma \ref{reduction}, $R$ does not have $U_1=\id$ either. \end{proof}

Therefore, to establish the result of this section, we only need to verify that if $R$ is distributive, then $U_1=\id$, and if $R$ is non-distributive but has all its proper upper intervals distributive, then $U_1\ne \id$. Both of these statements involve lattices all of whose proper upper intervals are distributive. It will be useful to give a structure theorem for such lattices, and then to give an expression for their Coxeter matrices.

\begin{lemma} \label{comb-of-lattices} Let $R$ be a lattice, all of whose proper upper intervals are distributive. Let $M$ be the meet-irreducible elements of $R$. There is a collection $\mathcal S$ of order filters of $M$, and a bijection $\zeta$ from $R\setminus \{\hat 0\}$ to $\mathcal S$, such that \begin{itemize}
    \item If $A$ and $B$ are order filters and $A\subseteq B\in \mathcal S$, then $A\in \mathcal S$.
    \item $\zeta$ is an order isomorphism from $R\setminus \{\hat 0\}$ to $\mathcal S$ ordered by reverse inclusion.
\item The join of two elements of $R\setminus \{\hat 0\}$ corresponds to the intersection of the corresponding order filters.
\item The meet of two elements of $R\setminus \{\hat 0\}$ corresponds to union of the corresponding order filters, unless the union is not contained in $\mathcal S$, in which case the meet is $\hat 0$.
\item If $M$ has a minimum element $m$, then $m$ belongs to no order filter in $\mathcal S$. For any other element $x\in M$, we have that $\langle x\rangle \in \mathcal S$.

\end{itemize}
\end{lemma}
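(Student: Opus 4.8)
The plan is to realise $R\setminus\{\hat 0\}$ using the meet-irreducible elements of $R$, and then to invoke Birkhoff's representation theorem inside each of the distributive upper intervals $[a,\hat 1]$. Concretely, I would take $M$ to be the set of elements of $R\setminus\{\hat 0,\hat 1\}$ having a unique upper cover (the proper meet-irreducibles of $R$), ordered by the \emph{reverse} of the order it inherits from $R$, and set $\zeta(a)=\{m\in M : m\ge_R a\}$ for $a\in R\setminus\{\hat 0\}$. Several of the asserted properties then hold for any finite lattice $R$, with no use of the hypothesis: the set $\zeta(a)$ is an order ideal of $M$ (down-closed in $M$ means up-closed among the meet-irreducibles of $R$); the identity $\zeta(a\vee b)=\zeta(a)\cap\zeta(b)$ holds because $m\ge a\vee b$ is equivalent to ($m\ge a$ and $m\ge b$); every $m\in M$ lies in some set of the image, since $m\in\zeta(m)$; and, because every element of a finite lattice is the meet of the meet-irreducibles lying above it, $\zeta$ is injective and order-reflecting, hence an order isomorphism of $R\setminus\{\hat 0\}$ onto $\mathcal S:=\zeta(R\setminus\{\hat 0\})$ equipped with the reverse-inclusion order. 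So only the downward-closure of $\mathcal S$ and the meet clause still need the distributivity assumption.

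For those two, the pivotal step is the following observation about an arbitrary proper upper interval $D:=[a,\hat 1]$ with $a>\hat 0$. Since $D$ is an up-set in $R$ it has exactly the same covering relations as $R$, so the meet-irreducibles of $D$ other than its top are precisely the elements of $M$ that are $\ge a$, namely the set $\zeta(a)$, carrying the order it inherits from $M$. Applying Birkhoff's theorem to the order dual $D^{op}$, which is a finite distributive lattice whose join-irreducible poset is exactly $\zeta(a)$, one finds that $x\mapsto\{m\in M : m\ge_R x\}$ restricts on $D$ to a bijection onto the set of \emph{all} order ideals of $M$ contained in $\zeta(a)$, taking meets in $D$ to unions and joins in $D$ to intersections. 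In other words, the globally defined $\zeta$ restricts on each $[a,\hat 1]$ to exactly that interval's own Birkhoff representation.

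Given this, downward-closure is immediate: an order ideal $T\subseteq\zeta(a)$ equals $\zeta(x)$ for some $x\in[a,\hat 1]\subseteq R\setminus\{\hat 0\}$, so $T\in\mathcal S$. For the meet clause, if $a\wedge_R b>\hat 0$ I would apply the same observation to the distributive interval $[a\wedge_R b,\hat 1]$ (which contains both $a$ and $b$), in which meet-irreducible elements are meet-prime, to conclude $\zeta(a\wedge_R b)=\zeta(a)\cup\zeta(b)$, a member of $\mathcal S$; and if $a\wedge_R b=\hat 0$, then $\zeta(a)\cup\zeta(b)$ cannot equal $\zeta(c)$ for any $c\in R\setminus\{\hat 0\}$, since $\zeta(c)\supseteq\zeta(a)$ and $\zeta(c)\supseteq\zeta(b)$ would force $c\le a$ and $c\le b$, hence $c\le a\wedge_R b=\hat 0$.

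The main obstacle is the identification in the middle paragraph: matching the meet-irreducibles of $[a,\hat 1]$ with the subset $\zeta(a)$ of $M$, checking that the two induced orders agree, and deducing that the globally defined $\zeta$ coincides there with the interval's Birkhoff representation. Everything else is either formal bookkeeping or an appeal to a standard fact (every element of a finite lattice is a meet of meet-irreducibles; in a distributive lattice meet-irreducible $=$ meet-prime; Birkhoff's representation theorem). One also has to pay some attention to degenerate cases — for instance when $\hat 0$ itself has a unique upper cover — which is the reason $\hat 0$ and $\hat 1$ are deleted from $M$ by hand.
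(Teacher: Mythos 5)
Your proposal is correct and is essentially the paper's own argument: the paper takes the same $M$ (the meet-irreducibles of $R$ with $\hat 0$ removed) and the same map $\zeta(x)=\{m\in M : m\ge x\}$, declaring the result immediate from the fundamental theorem of finite distributive lattices applied to the distributive upper intervals. Your write-up simply supplies the details the paper leaves implicit (identifying the meet-irreducibles of $[a,\hat 1]$ with $\zeta(a)$ and applying Birkhoff interval by interval), and correctly works with the domain $R\setminus\{\hat 0\}$, the statement's ``$R\setminus\{\hat 1\}$'' being a typo.
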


\begin{proof} This is immediate from the fundamental theorem of finite distributive lattices. 
The map $\zeta$ sends $x\in R\setminus\{\hat 0\}$ to the set of meet-irreducibles above $x$. \end{proof}

\begin{example}\label{nondistlatt} An example of a lattice of the form given in Lemma \ref{comb-of-lattices}, and which is non-distributive, is
the following:

$$\begin{tikzpicture}[>=latex,line join=bevel,->]
  \node (a) at (0,0) {8};
  \node (b) at (-1,-1) {5};
  \node (c) at (0,-1) {6};
  \node (d) at (1,-1) {7};
  \node (e) at (-1,-2) {2};
  \node (f) at (0,-2) {3};
  \node (g) at (1,-2) {4};
  \node (h) at (0,-3) {1};
  \draw (h) -- (g); \draw (h) -- (f); \draw (h) --(e); \draw (g)--(c);\draw (f)--(c); \draw (e)--(c); \draw (e)--(b); \draw (g)--(d); \draw (b)--(a); \draw (c)--(a);\draw (d)--(a); \end{tikzpicture}$$

A representation of the type guaranteed by Lemma \ref{comb-of-lattices} is the following:

$$\begin{tikzpicture}[>=latex,line join=bevel,->]
  \node (a) at (0,0) {$\emptyset$};
  \node (b) at (-1,-1) {$\{5\}$};
  \node (c) at (0,-1) {$\{6\}$};
  \node (d) at (1,-1) {$\{7\}$};
  \node (e) at (-1,-2) {$\{5,6\}$};
  \node (f) at (0,-2) {$\{3,6\}$};
  \node (g) at (1,-2) {$\{6,7\}$};
  \node (h) at (0,-3) {$\hat 0$};
  \draw (h) -- (g); \draw (h) -- (f); \draw (h) --(e); \draw (g)--(c);\draw (f)--(c); \draw (e)--(c); \draw (e)--(b); \draw (g)--(d); \draw (b)--(a); \draw (c)--(a);\draw (d)--(a); \end{tikzpicture}$$

Here $M=\{3,5,6,7\}$ and the poset structure on $M$ is that induced from $R$

$$\begin{tikzpicture}[>=latex,line join=bevel,->]
  \node (a) at (-1,1) {$5$};
  \node (b) at (0,0) {$3$};
  \node (c) at (1,1) {$7$};
  \node (d) at (0,1) {$6$};
  \draw (b) -- (d); \end{tikzpicture}$$ \end{example}

The following lemma allows us to conclude that a lattice all of whose proper upper intervals are distributive is in fact distributive if it satisfies one particular combinatorial condition which is otherwise inconvenient to deal with.

\begin{lemma}\label{exclude}
    Let $R$ be a lattice all of whose proper upper intervals are distributive, and suppose that $R$ has a meet-irreducible element $m$ smaller than all other meet-irreducibles. Then $R$ is distributive.
\end{lemma}

\begin{proof} Since all the other meet-irreducibles are greater than $m$, their meet must be greater than or equal to $m$, but since $m$ is meet-irreducible, it must be strictly greater than $m$. Thus $m$ is the minimum element of $R$. It is covered by a single element, the meet of all the other meet-irreducibles. The lattice $R$ consists therefore of a proper upper interval of $R$ with a new minimum element adjoined. Since the proper upper interval is distributive, so is $R$. \end{proof}

Let $R$ be a lattice with all proper upper intervals distributive. From now on, when working with such a lattice, we always apply Lemma \ref{comb-of-lattices}, and then identify $R$ with $\mathcal S \cup \{\hat 0\}$.

For $x_1,\dots,x_n$ elements of $M$, let $\langle x_1,\dots,x_r\rangle$ be the order filter of $M$ generated by $x_1,\dots,x_r$.
For $Y$ an order filter in $\mathcal S$, we write $\max Y^c$ for the maximal elements of the complement of $Y$ in $M$. We write $\Delta(Y)$ for the simplicial complex on $\max Y^c$ consisting of all subsets $F$ such that $\langle F\rangle \in\mathcal S$. 

For $\Delta$ a simplicial complex, write $\chi^\red(\Delta)$ for the reduced Euler characteristic of $\Delta$. By definition, this is a sum over all faces $F$ of $\Delta$ of $(-1)^{|F|}$. 

The following lemma describes the Coxeter matrix of a lattice with all proper upper intervals distributive.

\begin{lemma} \label{cox} Let $R$ be a lattice with all proper upper intervals distributive, described as in Lemma \ref{comb-of-lattices}.
\begin{enumerate}
\item Let $Y\in\mathcal S$. The row of the Coxeter matrix of $R$ corresponding to $Y$ has entries:
\begin{itemize} \item For $F$ in $\Delta(Y)$, $(-1)^{|F|+1}$ in column $\langle F\rangle,$ 
\item $\chi^\red(\Delta(Y))$ in column $\hat 0$,
\item 0 otherwise.
\end{itemize}
\item The row corresponding to $\hat 0$ is zero except for a $-1$ entry in column $\emptyset$. 
\end{enumerate}
\end{lemma}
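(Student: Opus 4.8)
The plan is to determine each row of the Coxeter matrix $C$ of the incidence algebra $KR$ by pairing it with the dimension vectors of the indecomposable projectives, which form a basis because the Cartan matrix is invertible. Recall $C\cdot\underline{\dim}P(z)=-\underline{\dim}I(z)$ for every vertex $z$, and that for an incidence algebra, with the normalisation fixed in the proof of Lemma~\ref{reduction}, one has $(\underline{\dim}P(z))_w=[z\le w]$ and $(\underline{\dim}I(z))_w=[w\le z]$ (here $[\,\cdot\,]$ is $1$ if the stated relation holds and $0$ otherwise). Hence a row vector $v$ equals the row of $C$ indexed by a vertex $x$ if and only if $v\cdot\underline{\dim}P(z)=-[x\le z]$ for every $z\in R$, and the lemma reduces to verifying this identity for the two row vectors written down in (1) and (2).

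Claim~(2) is the degenerate case: its vertex is $\hat{0}$, the right-hand side is $-[\hat{0}\le z]=-1$ for all $z$, and the vector with a single $-1$ in the column of $\hat{1}$ (that is, in column $\emptyset$) pairs with $\underline{\dim}P(z)$ to give $-[z\le\hat{1}]=-1$. For claim~(1), I would first translate the pairing into the model of Lemma~\ref{comb-of-lattices}. Put $x=\zeta^{-1}(Y)$. Since $\zeta$ reverses inclusion, for $z\ne\hat{0}$ corresponding to $Z\in\mathcal S$ we get $(\underline{\dim}P(z))_{\langle F\rangle}=[\langle F\rangle\subseteq Z]=[F\subseteq Z]$ (the last step because $Z$ is an order ideal), $(\underline{\dim}P(z))_{\hat{0}}=0$, and $[x\le z]=[Z\subseteq Y]$; for $z=\hat{0}$ every coordinate of $\underline{\dim}P(\hat{0})$ is $1$ and $[x\le\hat{0}]=0$. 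Plugging the proposed row of~(1) into the pairing, the case $z=\hat{0}$ collapses to $\sum_{F\in\Delta(Y)}(-1)^{|F|+1}+\chi^{\red}(\Delta(Y))=0$, which is immediate from the definition of $\chi^{\red}$, while the case $z\ne\hat{0}$ becomes the combinatorial identity
\[\sum_{\substack{F\in\Delta(Y)\\ F\subseteq Z}}(-1)^{|F|}=[Z\subseteq Y]\qquad\text{for all }Z\in\mathcal S.\]

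The crux is this identity, and the observation that makes it work is that $\{F\in\Delta(Y):F\subseteq Z\}$ is exactly the set of all subsets of $T:=Z\cap\min Y^{c}$. Indeed $F\subseteq Z$ is equivalent to $\langle F\rangle\subseteq Z$ because $Z$ is an order ideal, and then $\langle F\rangle\in\mathcal S$ holds automatically since $\mathcal S$ is closed under passing to sub-order-ideals (Lemma~\ref{comb-of-lattices}); so the only surviving condition on $F$ is $F\subseteq T$. Therefore the left-hand side equals $\sum_{F\subseteq T}(-1)^{|F|}=(1-1)^{|T|}$, which is $1$ if $T=\emptyset$ and $0$ otherwise, and one checks $T=\emptyset\iff Z\subseteq Y$: if $Z\subseteq Y$ then $Z$ misses $Y^{c}\supseteq\min Y^{c}$; conversely, given $m\in Z\setminus Y$, a minimal element $m'$ of $Y^{c}$ with $m'\le m$ lies in $Z$ (order ideal), so $m'\in T$. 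This proves~(1).

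I do not anticipate a single hard step; the real work is bookkeeping — keeping consistent the order-reversing identification $\zeta$, the incidence-algebra normalisation of $\underline{\dim}P$ and $\underline{\dim}I$, and the special status of the bottom vertex $\hat{0}$ of $R$, whose column carries the reduced Euler characteristic term. The one structural ingredient beyond formal manipulation is the closure of $\mathcal S$ under sub-order-ideals, which is precisely what turns the restricted complexes into full simplices and forces the alternating sums to vanish.
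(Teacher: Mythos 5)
Your proposal is correct and follows essentially the same route as the paper: both verify the candidate rows by pairing them with the dimension vectors of the indecomposable projectives via $C\cdot\underline{\dim}P(z)=-\underline{\dim}I(z)$, reducing claim (1) to the observation that the faces $F\in\Delta(Y)$ with $\langle F\rangle$ in the support of $\underline{\dim}P(Z)$ are exactly the subsets of $Z\cap\min Y^c$ (so the alternating sum is $0$ or $-1$), and claim (2) plus the $\hat 0$-column entry to the definition of the reduced Euler characteristic. Your write-up merely makes explicit a few points the paper leaves implicit (that the projective dimension vectors form a basis, and that $\langle F\rangle\in\mathcal S$ follows from closure of $\mathcal S$ under sub-ideals), so no further comparison is needed.
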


\begin{proof}
In the following, we check that a matrix with the proposed entries, call it $C$, indeed satisfies the defining property of the Coxeter matrix, namely, that $C\cdot \underline{\dim}P(i)=-\underline{\dim}I(i)$ for all $i\in R.$
We go row-by-row, showing that, for each $j \in R$, we have that $C_{j\cdot}\cdot \underline{\dim}P(i)=-\underline{\dim}I(i)_j$ for all $i\in R.$ 
Here, $C_{j\cdot}$ stands for the row of $C$ corresponding to $j.$

(1) We start with the case when $j=Y$ for some $Y\in \mathcal{S}.$ Let $i=Z$ for some $Z\in \mathcal{S}.$  The support of the dimension vector corresponding to a projective indecomposable for a $Z\subseteq Y$ and the support of the proposed row only intersect at $\emptyset$, so the dot product of such a projective indecomposable with the proposed row is $-1$. For a projective indecomposable corresponding to $Z\not\subseteq Y$, the intersection of the support of the dimension vector with the proposed row consists of 
$\langle F\rangle$, for all subsets $F$ of the non-empty set $Z\cap \max Y^c$. The dot product is therefore zero, as desired. 
It remains to consider the case $i=\hat 0.$
For the projective corresponding to $\hat 0$, the dot product is the sum of all the entries of the proposed row, which is 0 by the definition of the reduced Euler characteristic. The claim is proved.

(2) The case when $j=\hat{0}$ is an easy calculation.
\end{proof}

\begin{example}
  For the lattice of Example \ref{nondistlatt}, the Coxeter matrix is

  $$\left[\begin{array} {cccccccc}
    0&0&0&0&0&0&0&-1\\
    -1&0&1&0&0&0&1&-1\\
    -1&0&0&0&1&0&1&-1\\
    -1&0&1&0&1&0&0&-1\\
    0&0&0&-1&0&1&1&-1\\
    -2&0&1&0&1&0&1&-1\\
    0&-1&0&0&1&1&0&-1\\
    0&-1&0&-1&1&1&1&-1\end{array}\right]$$
It can easily be confirmed that this example agrees with the previous lemma. \end{example}

For the remainder of this section, we assume that the rows and columns of the Coxeter matrix  of $R$ are ordered with respect to a fixed linear extension of $R$. That means that if $i<j$ in $R$ then the column corresponding to $i$ appears to the left of the column corresponding to $j$ (and the row corresponding to $i$ appears above the row corresponding to $j$).

Now let $R$ be a lattice all of whose proper upper intervals are distributive, but which need not be distributive. We continue to use Lemma \ref{comb-of-lattices} to identify $R$ with $\mathcal S \cup \{\hat 0\}$.
We will now define a map from $R$ to $R$, as follows.
For $X$ an order filter in $\mathcal S$, define $\wor(X)=\langle \max (M\setminus X)\rangle$, provided $\langle\max(M\setminus X)\rangle \in \mathcal S$. Otherwise, define $\wor(X)=\hat 0$. Define $\wor(\hat 0)=\emptyset$. 

\begin{lemma} 
If $R$ is a distributive lattice, then $\wor=\row^{-1}$. 
\end{lemma}

\begin{proof} If $R$ is a distributive lattice, then there is an isomorphism $\mu$ from $R$ to the lattice of order filters of $M$, its poset of its meet-irreducibles, ordered by reverse inclusion. Its join-irreducibles are therefore the complements of the principal order ideals in $M$. Thus, its poset of join-irreducibles is isomorphic to $M$, and there is an isomorphism $\nu$ from $R$ to the lattice of order ideals of $M$, ordered  by inclusion. The composition $\mu^{-1}\circ \nu$ from order ideals to order filters is, naturally, complementation. Now it is clear that $\row$ (defined in terms of order ideals) and $\wor$ (defined in terms of order filters) are inverse. \end{proof}

If $R$ is not a distributive lattice, then nothing guarantees that $\wor$ defines a permutation of $R$. For any $X\in\mathcal S$, there is at most one order filter $Y$ with $\wor(Y)=X$, but this $Y$ might not be in $\mathcal S$. To compensate, there could be multiple different sets $Y\in\mathcal S$ such that $\wor(Y)=\hat 0$.

\begin{theorem} \label{distributiveImpliesL=id}
    Let $R$ be a distributive lattice and $C$ its Coxeter matrix. Then $C=PU$ for a permutation matrix $P$ and an upper triangular matrix $U$, and the Coxeter permutation $P$ corresponds to the rowmotion bijection on $R.$
\end{theorem}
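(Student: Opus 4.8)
My plan is to read off the Bruhat decomposition of $C$ directly from the explicit description of the Coxeter matrix in Lemma \ref{cox}, which applies since a distributive lattice has all of its proper upper intervals distributive, and then to identify the resulting permutation with rowmotion (hence, by \cite{IM}, with the grade bijection).

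First I would record an elementary fact: an invertible matrix $C$ admits a factorisation $C = PU$ with $P$ a permutation matrix and $U$ upper triangular if and only if the leftmost nonzero entries of the rows of $C$ lie in pairwise distinct columns; in that case $P$ records, for each row, the column of its leftmost nonzero entry. (Given distinctness, set $U := P^{-1}C$: its rows are the rows of $C$ reindexed so that the leftmost nonzero entry of each sits on the diagonal, so $U$ is upper triangular with nonzero diagonal, hence invertible. The converse is immediate.) By uniqueness of the permutation in a Bruhat decomposition, this is then the Coxeter permutation. So it suffices to show that the leftmost nonzero entries of the rows of $C$ lie in distinct columns and to identify the corresponding permutation.

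Next I would apply Lemma \ref{cox}. Since $R$ is distributive, in the description of Lemma \ref{comb-of-lattices} the set $\mathcal S$ is the full lattice of order ideals of $M$, so for each $Y$ the complex $\Delta(Y)$ is the full simplex on $\min Y^c$, whence $\chi^\red(\Delta(Y)) = 0$ as soon as $\min Y^c \ne \emptyset$. The only $Y$ with $\min Y^c = \emptyset$ is $Y = M$, the row indexed by $\hat 0$, whose unique nonzero entry is in the column of $\hat 1$ by Lemma \ref{cox}(2). For every other row $Y$ the nonzero entries are exactly the $(-1)^{|F|+1}$ in columns $\langle F\rangle$ with $F \subseteq \min Y^c$. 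Because the columns are ordered by a linear extension of $R$, and under $\zeta$ the order of $R$ restricts to reverse inclusion of order ideals, the column $\langle \min Y^c\rangle$ — which contains $\langle F\rangle$ for every $F \subseteq \min Y^c$ — is the leftmost of these; crucially, which column this is depends only on the order of $R$, not on the chosen linear extension. Thus the leftmost nonzero entry of the row indexed by $Y$ lies in the column indexed by $\langle \min(M \setminus Y)\rangle$.

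Finally, transporting along $\zeta$, the map sending the row-index $Y$ to the column-index $\langle \min(M\setminus Y)\rangle$ is the rowmotion map of $R$: this is precisely the meet-irreducible description of rowmotion, the only point needing attention being the order-reversal built into $\zeta$, which one matches against the fundamental theorem of finite distributive lattices. Being rowmotion, this map is a bijection, so the leftmost nonzero entries of the rows of $C$ indeed lie in distinct columns; by the first step, $C = PU$ with $P$ the permutation matrix of rowmotion and $U$ upper triangular, and the argument above shows $P$ is independent of the linear extension. Since rowmotion equals the grade bijection for incidence algebras of distributive lattices by \cite{IM}, $P$ is the grade bijection as well. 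I expect the main obstacle to be exactly this last identification, i.e. tracking the order-reversals relating the model of Lemma \ref{comb-of-lattices} to the order-ideal presentation of $R$ used to define rowmotion; an alternative that sidesteps it is to note that, once $C = PU$ with $P$ independent of the linear extension is established, one may specialise to an admissible linear extension (one exists since the grade function is compatible with the order of $R$) and invoke Theorem \ref{mainresultcoxeter} together with \cite{IM}.
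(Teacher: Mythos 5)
Your argument follows the same route as the paper's proof (apply Lemma \ref{comb-of-lattices} and read off leftmost nonzero entries from Lemma \ref{cox}, then identify the resulting permutation with rowmotion on meet-irreducibles), but it contains a genuine gap: the claim that $\mathcal S$ is the \emph{full} lattice of order ideals of $M$ is false in general. With $M$ the set of meet-irreducibles (minus $\hat 0$ if $\hat 0$ is meet-irreducible) and $\zeta(x)$ the set of meet-irreducibles above $x$, the ideal $M$ lies in $\mathcal S$ if and only if $\hat 0$ is meet-irreducible; when $\hat 0$ is not meet-irreducible (already for the Boolean lattice with two atoms), $\mathcal S$ consists of all order ideals of $M$ \emph{except} $M$ itself, and no other choice of $M$ can repair this, since $\mathcal S$ being all order ideals would force $R\setminus\{\hat 0\}$ to have a minimum. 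Consequently your assertion that $\Delta(Y)$ is always a full simplex, so that $\chi^\red(\Delta(Y))=0$ whenever $\min Y^c\neq\emptyset$, fails for the row $Y$ with $\langle \min Y^c\rangle=M\notin\mathcal S$: there $\Delta(Y)$ is the boundary of a simplex, $\chi^\red(\Delta(Y))=\pm 1$, and the leftmost nonzero entry of that row sits in column $\hat 0$, not in a column of the form $\langle F\rangle$. The paper treats this as a separate case and checks that it is still compatible with rowmotion (namely $\row^{-1}(Y)=\hat 0$ there); your proof as written simply does not cover it, so for instance it does not correctly describe the row of $\hat 1$ in the Boolean lattice $B_2$. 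A related confusion: you identify ``the only $Y$ with $\min Y^c=\emptyset$, namely $Y=M$'' with ``the row indexed by $\hat 0$'' and invoke Lemma \ref{cox}(2) for it. But $\hat 0$ is not in the domain of $\zeta$; when $M\in\mathcal S$ (i.e.\ $\hat 0$ meet-irreducible) the row $Y=M$ is the row of the unique atom, and by Lemma \ref{cox}(1) its leftmost nonzero entry is the value $\chi^\red(\Delta(Y))=1$ in column $\hat 0$, whereas the row of $\hat 0$ has its single entry $-1$ in column $\hat 1$ by Lemma \ref{cox}(2). So two boundary cases (the row landing in column $\hat 0$, and the row of $\hat 0$ itself) must be argued separately, exactly as in the paper; once they are added, your leftmost-entry criterion for a factorisation $C=PU$ and the identification of $P$ with rowmotion go through.

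A smaller point: your fallback argument (specialise to an admissible linear extension and invoke Theorem \ref{mainresultcoxeter} together with \cite{IM}) does not sidestep the problem. It needs the unproved assertion that some linear extension of $R$ is admissible for the incidence algebra, i.e.\ that the grade function is monotone along the order of $R$, and it also relies on the independence of $P$ from the linear extension, which you deduce from the very leftmost-entry analysis that has the gap above. So the clean fix is to complete the case analysis in the main argument rather than to appeal to the fallback.
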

\begin{proof}
Since $R$ is distributive, all its upper intervals are distributive, and 
we continue to identify $R$ with its description from Lemma \ref{comb-of-lattices}. Because $R$ is distributive, $\mathcal S$ consists of all order filters of $M$ other than $M$ itself. 

    Let $Y\in \mathcal{S}.$ If $\langle\max Y^c\rangle \in \mathcal{S}$, the faces of $\Delta(Y)$ are all the subsets of $\max Y^c$. By Lemma \ref{cox}, the entry in column $\hat{0}$ is zero, and the leftmost non-zero entry in row $Y$ is in column $\langle \max Y^c\rangle =\wor(Y)$.

    We must also consider the case when $\langle \max Y^c\rangle \notin \mathcal{S}$ for some $Y\in \mathcal{S}.$ The only occasion when this happens is when $\langle\max Y^c\rangle=M.$ That happens exactly when $\max Y^c$ consists of all the minimal elements of $M$.
    Then the faces of $\Delta(Y)$ are all subsets of $\max Y^c$ except $\max Y^c$ itself. Thus, by Lemma \ref{cox}, the entry in column $\hat{0}$ of row $Y$ is non-zero. We again have $\hat{0}=\wor(Y)$.  
    
    Finally, we obtain from the same lemma that the leftmost non-zero entry in row $\hat{0}$ is in column $\emptyset$. 
    
    Thus, we have shown that, for each $Y\in \mathcal S\cup\{\hat 0\}$, the leftmost non-zero entry in row $Y$ is in column $\wor(Y)$. Since $\wor$ is a permutation, this shows that $C$ can be written as  $PU_2$ with $P$ a permutation matrix and $U_2$ upper triangular, with the 1's of $P$ in row $Y$, column $\wor(Y)$, so $P$ encodes the permutation $\wor^{-1}=\row$.
\end{proof}



\begin{proposition} \label{rows} Let $R$ be a lattice with all proper upper intervals distributive, described as in Lemma \ref{comb-of-lattices} and suppose that $U_1=\id$ in a Bruhat decomposition $U_1PU_2$ of the Coxeter matrix of $R$. 
For $Y \in \mathcal S \cup\{\hat 0\}$, we show that, in the Coxeter matrix of $R$, the leftmost non-zero entry of row $Y$ is in column $\wor(Y)$.
  \end{proposition}

This proposition will be used in the proof of Theorem \ref{L=idImpliesDistributive} to show that the only lattices satisfying its hypotheses are in fact the distributive lattices.

\begin{proof} 
The proof proceeds by induction on $\wor(Y)$: we assume that it is shown for all $Z\in \mathcal S$ with $\wor(Z)$ after $\wor(Y)$ in the linear extension, and prove that it also holds for $Y$.

We therefore begin with the case that $\wor(Y)=\emptyset$. Since $M\notin\mathcal S$, the only element of $R$ with $\wor(Y)=\emptyset$ is $\hat 0$.
By Lemma \ref{cox}, the $\hat 0$ row has indeed its leftmost non-zero entry in column $\wor(\hat 0)=\emptyset$.


If $X=\langle x\rangle$ is meet-irreducible in $R$, let $Y$ be the row whose leftmost non-zero entry is in column $X$. Such a $Y$ exists by the assumption that $U_1=\id.$  By Lemma \ref{cox}, this means that $x\in \max Y^c$. We divide into cases based on the number of elements in $\max Y^c$. 

If $\max Y^c=\{x\}$, then $X=\wor(Y)$, as desired. 
(It might seem that this disposes of the case that $X=\langle x\rangle$, but that is not true. There is certainly a subset $Y$ of $M$ with $\max Y^c=\{x\}$, namely the complement of the order ideal generated by $x$, but this subset might not be in $\mathcal S$, so it might not correspond to a row of the Coxeter matrix. To rule this out, we must show that there is no other row whose leftmost non-zero entry could be in column $X$.)

Suppose next that $\max Y^c=\{x,y\}$. If $\langle x,y\rangle$ belonged to $\mathcal S$, then the row corresponding to $Y$ would have an entry there by Lemma \ref{cox}, which would be to the left of the entry in column $\langle x\rangle$, contradicting our assumption on $Y$. This means that $\Delta(Y)$ consists just of two points, $x$ and $y$. Its reduced Euler characteristic is therefore $-1$, so it has a non-zero entry in the $\hat 0$ column, again contradicting our assumption on $Y$, so the case that $\max Y^c$ has exactly two elements is ruled out.

Suppose now that $\max Y^c$ has at least three elements, and suppose three of them are $x,y,z$. Since $X$ is the leftmost non-zero entry of the row $Y,$ $y$ and $z$ must come after $x$ with respect to the chosen linear order on $\mathcal S$. Let $Z$ be the row whose leftmost nonzero entry is in position $\langle z\rangle$. By induction, $\wor(Z)=\langle z\rangle$, so $Z$ is the complement of the order ideal generated by $z$. 
Since $x,y,z$ form an antichain, both $x$ and $y$ belong to $Z$. Thus $\langle x,y\rangle \subseteq Z\in\mathcal S$, so $\langle x,y\rangle \in \mathcal S$. But the column $\langle x,y\rangle$ is to the left of the column $\langle x\rangle$, and it would have a non-zero entry in the row $Y$ by Lemma \ref{cox}. This case can therefore also be ruled out, and $\max Y^c$ must consist of the single element $\{x\}$.

Now suppose that $X$ is not meet-irreducible.  Suppose the minimal elements of $X$ are $x_1,\dots,x_r$, with $r>1$. By induction, if $W$ is the row with leftmost nonzero entry in column $\langle x_1\rangle$, then by induction $\wor(W)=\langle x_1\rangle$, and $W$ is the complement of the order ideal generated by $x_1$. 
There is a unique order filter $Y$ with $\wor(Y)=X$, and $Y\subseteq W$, so $Y\in \mathcal S$ as well.
We know $\max Y^c=\{x_1,\dots,x_r\}$. Since $\langle x_1,\dots,x_r\rangle=X$ belongs to $\mathcal S$ by assumption,  $\Delta(Y)$ is the $(r-1)$-dimensional simplex on $x_1,\dots,x_r$. Since the reduced Euler characteristic of a simplex is 0, by Lemma \ref{cox}, the leftmost entry of the row corresponding to $Y$ is in the column corresponding to $X$, as desired.

Finally, we consider the case that $\wor(Y)=\hat 0$. Since for all other $Y'$ with $\wor(Y')\in \mathcal S$, we have already seen that the leftmost entry in row $Y'$ is in column $\wor(Y')\in\mathcal S$, we know that the leftmost entry in row $Y$ must be in the one remaining column, which is not in $\mathcal S$, i.e., in column $\hat 0$, as desired.
\end{proof}

\begin{corollary} If $R$ is a lattice with all proper upper intervals distributive, and with $U_1=\id$ in the Bruhat decomposition $U_1PU_2$ of the Coxeter matrix of $R$, then $\wor$ is a bijection.\end{corollary}

\begin{proof} This follows from the previous lemma because $\wor(Y)$ is the position of the leftmost non-zero entry in row $Y$, and the leftmost non-zero entries are all in different rows by the assumption that $U_1=\id$.
\end{proof}

\begin{theorem} \label{L=idImpliesDistributive} Let $R$ be a lattice whose Coxeter matrix has a Bruhat decomposition $U_1PU_2$ with $U_1=\id$. Then $R$ is distributive. \end{theorem}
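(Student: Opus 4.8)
The plan is to prove the contrapositive. By the corollary following Lemma~\ref{reduction} it suffices to show that if $R$ is a non-distributive lattice all of whose proper upper intervals are distributive, then no Bruhat decomposition $U_1PU_2$ of its Coxeter matrix $C$ has $U_1=\id$; equivalently, that the leftmost non-zero entries of the rows of $C$ cannot lie in pairwise distinct columns. So I would fix such an $R$, invoke the combinatorial model $(M,\mathcal S,\zeta)$ of Lemma~\ref{comb-of-lattices} and the row description of $C$ from Lemma~\ref{cox}, and assume for contradiction that $U_1=\id$; in particular the first column of $C$ then has exactly one non-zero entry.

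First I would record the consequences of the hypotheses. Non-distributivity forces $M\notin\mathcal S$ — otherwise, since $\mathcal S$ is closed under passing to sub-order-ideals, $\mathcal S$ would be all of $J(M)$, the lattice of order ideals of $M$, and $R$ would be distributive — so $J(M)\setminus\mathcal S\ne\emptyset$, and I would fix an inclusion-minimal element $Z=\langle z_1,\dots,z_k\rangle$ of it; here the $z_i$ form an antichain with $k\ge 2$ (every principal order ideal lies in $\mathcal S$, because every element of $M$ lies in some downward closed member of $\mathcal S$), and every order ideal properly contained in $Z$ lies in $\mathcal S$. Next, $U_1=\id$ lets me invoke Lemma~\ref{rows}: for each $X\in\mathcal S$ the row of $C$ with leftmost entry in column $X$ is the one indexed by $\langle\min(M\setminus X)\rangle$, and for $X\ne\emptyset$ this order ideal is again in $\mathcal S$. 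Thus the map $X\mapsto\langle\min(M\setminus X)\rangle$ — a form of rowmotion on $J(M)$, hence a bijection of the finite set $J(M)$ — carries $\mathcal S\setminus\{\emptyset\}$ into itself and, being injective, permutes it.

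One case is then immediate: if $\langle\min M\rangle\in\mathcal S$ then, by the previous paragraph, $\Delta(X)$ is a full simplex and $\chi^{\red}(\Delta(X))=0$ for every $X\in\mathcal S$, so by Lemma~\ref{cox} the column of $C$ indexed by $\hat 0$ — the first column — is identically zero, contradicting $U_1=\id$. So I may assume $\langle\min M\rangle=\min M\notin\mathcal S$; since $\mathcal S$ is downward closed, no order ideal containing $\min M$ lies in $\mathcal S$. The heart of the proof is to contradict the permutation statement above by exhibiting a non-empty $X\in\mathcal S$ with $\langle\min(M\setminus X)\rangle\notin\mathcal S$. The natural candidates are the principal order ideals $X=\langle z\rangle$, which lie in $\mathcal S$ for free: deleting $\downarrow z$ from $M$ turns elements above $z$ into new minimal elements, and I would try to pick a minimal $z\in M$ so that $\langle\min(M\setminus\langle z\rangle)\rangle$ contains all of $\min M$ — whence it is outside $\mathcal S$, as needed.

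The main obstacle I anticipate is precisely that last choice, together with the rigid configurations in which no principal order ideal works (for instance when $M$ is an antichain, so removing a point creates no new minimal element). There one must instead rule out the rowmotion-invariance of $\mathcal S\setminus\{\emptyset\}$ by a direct analysis — of the minimal missing order ideal $Z=\langle z_1,\dots,z_k\rangle$ and of the inverse-rowmotion images $\langle\min(M\setminus B_i)\rangle$ of the maximal members $B_1,\dots,B_m$ of $\mathcal S$ — and this is exactly where the proof must use that $\mathcal S$ arises from a lattice rather than from an arbitrary downward closed family of order ideals: Lemma~\ref{comb-of-lattices} supplies $Z$, the fact that all its proper sub-order-ideals are in $\mathcal S$, and the behaviour of the meets of $R$ that "escape" $\mathcal S$, and it is this rigidity that should close the argument, presumably after a short case split.
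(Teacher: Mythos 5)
Your skeleton (contrapositive, reduction via Lemma \ref{reduction} and its corollary to a non-distributive lattice all of whose proper upper intervals are distributive, then Lemmas \ref{comb-of-lattices}, \ref{cox}, \ref{rows}) is the same as the paper's, but the proposal has a genuine gap, and it starts with a misquotation of the key lemma. Lemma \ref{rows} says that the row whose leftmost nonzero entry is in column $X$ is the row indexed by $\row^{-1}(X)$, i.e.\ by the order ideal of elements of $M$ lying above no maximal element of $X$ --- not by $\row(X)=\langle\min(M\setminus X)\rangle$, as you state (the two differ already for small chains). Consequently the assertion your whole plan rests on, namely that $X\mapsto\langle\min(M\setminus X)\rangle$ carries $\mathcal S\setminus\{\emptyset\}$ into itself and permutes it, is not what the lemma provides: it provides that $\row^{-1}$ maps $\mathcal S\setminus\{\emptyset\}$ injectively into $\mathcal S$, and a counting argument then only shows that $\row$ sends all of $\mathcal S$ except one unidentified element into $\mathcal S$. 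Both your ``immediate case'' (that $\langle\min M\rangle\in\mathcal S$ forces the $\hat 0$-column of $C$ to vanish via full simplices in every row) and your strategy of deriving a contradiction by exhibiting a nonempty $X\in\mathcal S$ with $\langle\min(M\setminus X)\rangle\notin\mathcal S$ therefore rest on an unjustified forward-invariance. On top of this, even granting your setup, the decisive step is missing by your own account: when no principal ideal $\langle z\rangle$ does the job (for instance when $M$ is an antichain), you defer to an unspecified ``direct analysis'' of the minimal missing ideal $Z$ and a ``short case split'', so the argument is not closed.

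Used in its actual direction, Lemma \ref{rows} finishes the proof in one step and no case analysis or minimal missing ideal is needed. For each maximal element $j$ of $M$, the principal ideal $\langle j\rangle$ lies in $\mathcal S$, so the row with leftmost entry in column $\langle j\rangle$ is indexed by $\row^{-1}(\langle j\rangle)=M\setminus\{j\}$, and in particular $M\setminus\{j\}\in\mathcal S$. Every proper order ideal of $M$ is contained in $M\setminus\{j\}$ for some maximal $j$, and $\mathcal S$ is closed under passing to smaller ideals, so $\mathcal S$ consists of all order ideals of $M$ except $M$ itself (recall $M\notin\mathcal S$, as you correctly observed). But then $R$ is isomorphic to the full lattice of order ideals of $M$, with $\hat 0$ playing the role of $M$, hence distributive --- contradicting the assumption that $R$ is not distributive. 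This is exactly the paper's argument; your search for a ``bad'' $X$ and the analysis of $Z$ can be dropped entirely.
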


\begin{proof} 
  By Corollary \ref{red-cor}, it suffices to assume that $R$ is of the form described by Lemma \ref{comb-of-lattices}. We continue to identify $R$ with $\mathcal S\cup \{\hat 0\}$. By Lemma \ref{exclude}, if $R$ has a minimum meet-irreducible element, then it is distributive. 
  
  Assume otherwise. Let the minimal elements of $M$ be $x_1,\dots,x_r$ with $r\geq 2$. Since none of these elements is the minimum element of $M$, we know that $\langle x_i\rangle\in \mathcal S$ for all $i$.

Applying Proposition \ref{rows}, the row whose leftmost non-zero entry is in column $\langle x_i\rangle$ is $\wor^{-1}(\langle x_i\rangle)=M\setminus\{x_i\}$. Thus, for all $i$, we have that $M\setminus\{x_i\}\in \mathcal S$. Thus $\mathcal S$ consists of all order filters of $M$ except for $M$ itself. It follows that $R$ is isomorphic to the order filters of $M$ (ordered by reverse inclusion), and is in fact distributive.\end{proof}

\section{Two open questions}
In this final section, we pose two questions that are supported by some computer experiments.
\begin{question} 
 Let $A$ be a 2-Gorenstein acyclic quiver algebra of finite global dimension with an admissible ordering of the simple modules and assume $C$ has a Bruhat decomposition $U_1PU_2$ with $U_1=\id$. Is $A$ then Auslander regular?
\end{question}
We remark that this does not hold when we drop the assumption that $A$ is 2-Gorenstein
as the following example shows:

\begin{example}

Let $Q$ be the following quiver and set $A=kQ/\rad^2(kQ).$
\[\begin{tikzcd}
	& 2 \\
	4 & 1 \\
	& 3
	\arrow[from=1-2, to=2-1]
	\arrow[from=2-2, to=1-2]
	\arrow[from=2-2, to=2-1]
	\arrow[from=2-2, to=3-2]
	\arrow[from=3-2, to=2-1]
\end{tikzcd}\]
This labeling of the vertices of the quiver gives an admissible ordering on the simples ($S(1)<S(2)<S(3)<S(4)$) as $\grade(S(2))=\grade(S(3))=\grade(S(4))=0<\grade(S(1))=1.$ 
The Coxeter matrix $C$ is given by 
\[ \left( \begin{array}{cccc}
0 & 0 & 0 & -1\\
0 & 0 & 1 & -1\\
0 & 1 & 0 & -1\\
-1 & 1 & 1 & -1 \end{array} \right) \]
and a Bruhat decomposition $U_1PU_2$ is given by:
$C=PU_2$ with $P=$
\[ \left( \begin{array}{cccc}
0 & 0 & 0 & 1\\
0 & 0 & 1 & 0\\
0 & 1 & 0 & 0\\
1 & 0 & 0 & 0 \end{array} \right) \]
and $U_1$ the identity matrix and $U_2=$
\[ \left( \begin{array}{cccc}
-1 & 1 & 1 & -1\\
0 & 1 & 0 & -1\\
0 & 0 & 1 & -1\\
0 & 0 & 0 & -1 \end{array} \right). \]
But $A$ is not Auslander regular, as it is not even $1$-Gorenstein.
This follows from the fact that all injective $A$-modules have projective dimension $2.$
\end{example}

The next question asks whether we can charactise Auslander regular incidence algebras of posets using the Coxeter matrix as we did for lattices. We remark that a classification of Auslander regular posets is not known in general and some partial results are work in progress. Note that being 1-Gorenstein for an incidence algebra $KR$ is equivalent to the poset $R$ being bounded by \cite[Proposition 5.1]{IM}.
\begin{question}
Is the incidence algebra of a bounded poset with linear extension ordering of the simple modules Auslander regular if and only if $U_1=\id$ in a Bruhat decomposition $U_1PU_2$ of the Coxeter matrix?
\end{question}

\section*{acknowledgement}
This project profited from the use of \cite{Sage} and the GAP-package \cite{QPA}. We thank the anonymous referee for useful suggestions and comments.

\end{document}